\DeclareFontFamily{OT1}{pzc}{}
\DeclareFontShape{OT1}{pzc}{m}{it}{<-> [1.15] rpzcmi}{}
\DeclareMathAlphabet{\mathzc}{OT1}{pzc}{m}{it}
\def\End{\operatorname{End}\kern-.5pt}
\def\KK{{\mathzc K\kern0pt}}
\def\vv{{\mathzc v\kern.5pt}}
\def\aq{/\kern-2pt/}
\def\Hom{\operatorname{Hom}}
\def\diag{\operatorname{diag}}
\def\La{{\Lambda}}
\def\fT{{\mathfrak T}}
\def\fS{{W}}
\def\End{{\text{\rm End}}}
\def\Hom{{\text{\rm Hom}}}
\def\ro{{\text{\rm ro}}}
\def\co{{\text{\rm co}}}
\def\bsj{{\boldsymbol j}}
\def\up{{\boldsymbol{\upsilon}}}
\def\ups{{\upsilon}}
\def\sH{{\mathcal H}}
\def\sS{{\mathcal S}}
\def\sZ{{\mathcal Z}}
\def\sD{{\mathcal D}}
\def\la{{\lambda}}
\def\bsq{{\boldsymbol q}}
\def\dq{{\dot{\boldsymbol q}}}
\def\ddq{{\ddot{\boldsymbol q}}}
\newtheorem{theorem}{Theorem}[section]
\newtheorem{lemma}[theorem]{Lemma}
\newtheorem{proposition}[theorem]{Proposition}
\newtheorem{corollary}[theorem]{Corollary}
\theoremstyle{definition}
\newtheorem{remark}[theorem]{Remark}
\newtheorem{remarks}[theorem]{Remarks}
\numberwithin{equation}{theorem}
\def\xy{{{\text{\tiny $[$}}xy{\text{\tiny$]$}}}}
\def\yx{{{\text{\tiny $[$}}yx{\text{\tiny$]$}}}}
\def\sck{{{\textsc k}}}
\def\End{\operatorname{End}\kern-.5pt}
\def\KK{{\mathzc K\kern0pt}}
\def\vv{{\mathzc v\kern.5pt}}
\def\aq{/\kern-2pt/}
\def\Hom{\operatorname{Hom}}
\def\diag{\operatorname{diag}}
\def\La{{\Lambda}}
\def\End{{\text{\rm End}}}
\def\Hom{{\text{\rm Hom}}}
\def\row{{\text{\rm row}}}
\def\bsj{{\boldsymbol j}}
\def\bsq{{\boldsymbol q}}
\def\bse{{\boldsymbol e}}
\def\up{{\boldsymbol{\upsilon}}}
\def\sH{{\mathcal H}}
\def\sS{{\mathcal S}}
\def\sZ{{\mathcal Z}}
\def\sD{{\mathcal D}}
\def\la{{\lambda}}
\def\corner{{\boldsymbol\llcorner}}
\newcommand{\ol}{\overline}
\def\fks{{\mathfrak{s}}}
\def\fku{{\mathfrak{u}}}
\def\sck{{\textsc{k}}}
\def\sce{{\textsc{e}}}
\def\scf{{\textsc{f}}}
\begin{document}

\baselineskip16pt
\def\hei{\relax}

 \title[$q$-Schur superalgebras]{Multiplication formulas and semisimplicity for $q$-Schur superalgebras}
\author{Jie Du, Haixia Gu$^\dagger$ and Zhongguo Zhou}%\footnote{Corresponding author.}
\address{J.D., School of Mathematics and Statistics,
University of New South Wales, Sydney NSW 2052, Australia}
\email{j.du@unsw.edu.au}
\address{H.G., School of Science, Huzhou University, Huzhou, China}
\email{ghx@zjhu.edu.cn}
\address{Z.Z., College of Science, Hohai University, Nanjing, China}
\email{zhgzhou@hhu.edu.cn}
\keywords{symmetric group, double coset, $q$-Schur superalgebra, infinitesimal $q$-Schur superalgebra}

\date{\today}

\subjclass[2010]{20B30, 20G43, 17A70, 20C08}
\thanks{$^\dagger$Corresponding author.}
\thanks{
The work was supported by a 2017 UNSW Science Goldstar Grant and the Natural Science Foundation of China (\#11501197,  \#11671234). The third author would like to thank UNSW for its hospitality during his a year visit and thank the Jiangsu Provincial Department of Education for financial support.}

%\subjclass[2010]{Primary: 20C08, 20G42, 20G43; Secondary: 17A70, 17B37}

\begin{abstract}
We investigate products of certain double cosets for the symmetric group and use the findings to derive some multiplication formulas for the $q$-Schur superalgebras. This gives a combinatorialisation of the relative norm approach developed in \cite{DG}. We then give several applications of the multiplication formulas, including the matrix representation of the regular representation and  a semisimplicity criterion for $q$-Schur superalgebras. We also construct infinitesimal and little $q$-Schur superalgebras directly from the multiplication formulas and develop their semisimplicity criteria. 
\end{abstract}

 \maketitle

\tableofcontents

\section{Introduction}

The beautiful Beilinson--Lusztig--MacPherson construction \cite{BLM} of quantum $\mathfrak{gl}_n$ has been generalised to the quantum affine $\mathfrak{gl}_n$ \cite{DDF, DF}, to the quantum super $\mathfrak{gl}_{m|n}$ \cite{DG}, and partially to the other classical types  \cite{BKLW,FL} and affine type $C$ \cite{FLLLW}, in which
 certain coideal subalgebras of quantum $\mathfrak{gl}_n$ (or affine ${\mathfrak{gl}}_n$) are used to form various quantum symmetric pairs associated with Hecke algebras of type $B/C/D$ or affine type $C$. A key step of these works is the establishment of certain multiplication formulas in the relevant $q$-Schur algebras or Hecke endomorphism algebras. These formulas were originally derived by geometric methods. When the geometric approach is not available in the super case, a super version of the Curtis--Scott relative norm basis \cite{J, DU}, including a detailed analysis of the explicit action on the tensor space, is used in deriving such formulas; see  \cite{DG,DGW,DGW2}. However, it is natural to expect the existence of a direct Hecke algebra method involving only the combinatorics of symmetric groups. 

 In this paper, we will develop such a method. The multiplication formulas require to compute certain structure constants associated with the double coset basis, a basis defined by 
the double cosets of a symmetric group.  Since a double coset can be described by a certain matrix with non-negative integer entries, our first step is to find formulas,  in terms of the matrix entries, of decomposing products of certain double cosets into disjoint unions of double cosets. We then use the findings to derive the multiplication formulas in $q$-Schur superalgebras; see Theorem \ref{KMF} and Corollary \ref{KMFcor}. This method simplify the calculation in \cite[\S\S2-3]{DG} using relative norms.
 
The multiplication formulas result in several applications. The first one is the matrix representation of the regular representations over any commutative ring $R$; see Theorem \ref{KeyMF}. When the ground ring $R$ is a field, we establish a criterion for the semisimplicity of $q$-Schur superalgebras (see Theorem \ref{thmqss}), generalising a quantum result of Erdmann and Nakano to the super case and a classical super result of Marko and Zubkov \cite{mz2} (cf. \cite{DN, EN}) to the quantum case.  Finally, we introduce the infinitesimal and little $q$-Schur superalgebras directly from the multiplication formulas (Theorem \ref{6.1}, Corollary \ref{little}). We also determine semisimple infinitesimal $q$-Schur superalgebras and semisimple little $q$-Schur superalgebras (Theorem \ref{thmiqs}).

 It should be interesting to point out that, unlike the traditional methods used in \cite{DNP, DFW}, our definitions do not involve quantum enveloping algberas or quantum coordinate algebras and the semisimplicity proof is also independent of the representation theory of these ambient quantum groups or algebras. 
We expect that this combinatorial approach will give further applications to various $q$-Schur superalgebras of other types in the near future.

\vspace{.2cm}
\noindent
{\bf Acknowledgement.} We thank the referee for several helpful comments.

\section{$q$-Schur superalgebras}

Let $W={\mathfrak S}_{\{1,2,\ldots,r\}}$ be the symmetric group on $r$ letters and let $S=\{s_k\mid 1\leq k<r\}$ be the set of basic transpositions $s_k=(k,k+1)$. Denote the length function with respect to $S$ by
$\ell:W\to\mathbb{N}$.

Let $R$ be a commutative ring with 1 and let $q\in R^\times$.
The Hecke algebra $\mathcal{H}_R=\sH_R(\fS)$ is a free
$R$-module with basis $\{T_w\mid w\in W\}$ and the
multiplication defined by the rules: for $s\in S$,
\begin{equation}
T_wT_s=\left\{\begin{aligned} &T_{ws},
&\mbox{if } \ell(ws)>\ell(w);\\
&(q-1)T_w+q T_{ws}, &\mbox{otherwise}.
\end{aligned}
\right.
\end{equation}
The Hecke algebra over $R=\sZ:=\mathbb Z[\up,\up^{-1}]$ and $q=\up^2$ is simply denoted by $\sH$.

Let $W_\la$ denote the parabolic subgroup
of $W$ associated with $\la=(\lambda_1,\lambda_2,\cdots,\lambda_N)\in\La(N,r)$
where $\La(N,r)=\{\la\in{\mathbb N}^N\mid |\la|:=\sum_i\la_i=r\}$. Then $W_\la$ consists of permutations that leave
invariant the following sets of integers
$${\mathbb N}_1^\la=\{1,2,\cdots,\lambda_1\},{\mathbb N}_2^\la=\{\lambda_1+1,\lambda_1+2,\cdots,\lambda_1+\lambda_2\},\cdots.$$

Let $\sD_\la:=\mathcal{D}_{W_\la}$ be
the set of all shortest coset representatives of the right
cosets of $W_\la$ in $W$. Let
$\mathcal{D}_{\lambda\mu}=\mathcal{D}_\lambda\cap\mathcal{D}^{-1}_{\mu}$ be the set of the shortest
$W_\lambda$-$W_\mu$ double coset representatives. 

For $\la,\mu\in\La(N,r)$ and
$d\in\mathcal{D}_{\la\mu}$, the subgroup $\fS_\la^d\cap
\fS_\mu=d^{-1}\fS_\la d\cap \fS_\mu$ is a parabolic subgroup associated
with a composition which is denoted by $\la d\cap\mu$. In other
words, we define
\begin{equation}\label{ladmu}
\fS_{\la d\cap\mu}=\fS_\la^d\cap \fS_\mu.
\end{equation}
The composition $\la d\cap\mu$ can be easily described in terms of the following $N\times N$-matrix $A=(a_{i,j})$ with $a_{i,j}=|{\mathbb N}^\la_i\cap d({\mathbb N}^\mu_j)|$: if $\nu^{(j)}=(a_{1,j},a_{2,j},\ldots,a_{N,j})$ denotes the $j$th column of $A$, then
\begin{equation}\label{ladmu}
 \la d\cap\mu=(\nu^{(1)},\nu^{(2)},\ldots,\nu^{(N)}).
  \end{equation}
Putting $\jmath(\la,d,\mu)=\big(|{\mathbb N}^\la_i\cap d({\mathbb N}^\mu_j)|\big)_{i,j}$, we obtain a bijection
\begin{equation}\label{jmath}
\jmath:\{(\la,d,\mu)\mid \la,\mu\in\La(N,r),d\in\sD_{\la\mu}\}\longrightarrow M(N,r),
\end{equation}
where $M(N,r)$ is the set of all $N\times N$ matrices $A=(a_{i,j})$ over $\mathbb N$ whose entries sum to $r$, i.e., $|A|:=\sum_{i,j}a_{i,j} =r$.

For $A\in M(N,r)$, if $\jmath^{-1}(A)=(\la,d,\mu)$, then $\la,\mu\in\La(N,r)$ and
\begin{equation}\label{ro co}
\la=\ro(A):=(\sum_{j=1}^Na_{1,j},\ldots,\sum_{j=1}^Na_{N,j})\,\text{ and }\,\mu=\co(A):=(\sum_{i=1}^Na_{i,1},\ldots,\sum_{i=1}^Na_{i,N}).
\end{equation}

For the definition of $q$-Schur superalgebra, we fix two nonnegative integers $m,n$ and assume $R$ has characteristic $\neq2$.  We also need the {\it parity function}
\begin{equation}\label{parity}
\widehat{h}=\begin{cases}0,&\text{if }1\leq h\leq m;\\1,&\text{if }m+1\leq h\leq m+n.
\end{cases}\end{equation}
A composition $\la$ of $m+n$ parts will be written
$$\la=(\lambda^{(0)}|\lambda^{(1)})=(\lambda^{(0)}_1,\lambda^{(0)}_2,\cdots,\lambda^{(0)}_m|\lambda^{(1)}_1,
\lambda^{(1)}_2,\cdots,\lambda^{(1)}_n)$$ to indicate the``even'' and ``odd'' parts of $\la$. Let
\begin{equation*}
\Lambda(m|n,r):=\La(m+n,r)
=\bigcup_{r_1+r_2=r}(\La(m,r_1)\times\La(n,r_2)).
\end{equation*}

For $\lambda=(\lambda^{(0)}\mid\lambda^{(1)})\in\Lambda(m|n,r)$, we also write
\begin{equation}\label{notation}
\fS_\la=
\fS_{\la^{(0)}}\fS_{\la^{(1)}}\cong \fS_{\la^{(0)}}\times \fS_{\la^{(1)}},
\end{equation}
 where
$\fS_{\lambda^{(0)}}\leq{\mathfrak S}_{\{1,2,\ldots,|\lambda^{(0)}|\}}$
and
$\fS_{\lambda^{(1)}}\leq{\mathfrak S}_{\{|\lambda^{(0)}|+1,\ldots,r\}}$ are the even and odd parts of $\fS_\la$, respectively.

Denote the Hecke algebra associated with the parabolic subgroup $W_\la$ by $\sH_\la$, which is spanned by $T_w,w\in W_\la$. The elements in $\sH_{\la}$
\begin{equation}
\xy_\la:=x_{\la^{(0)}}y_{\la^{(1)}},\;\yx_\la:=y_{\la^{(0)}}x_{\la^{(1)}},
\end{equation}
where, for $i=0,1$,
$$x_{\lambda^{(i)}}=\sum_{w\in \fS_{\lambda^{(i)}}}T_w,\qquad y_{\lambda^{(i)}}=\sum_{w\in
\fS_{\lambda^{(i)}}}(-q)^{-\ell(w)}T_w$$ 
generate $\sH_\la$-modules $R\xy_\la$, $R\yx_\la$.
%The $\sH_R$-module $\xy_\la\mathcal{H}_{R}$ is called a {\it signed $q$-permutation module}.
Define the ``tensor space'' (cf. \cite[(8.3.4)]{DR})
\begin{equation}\label{Tmnr}
\fT_R(m|n,r)=\bigoplus_{\lambda\in\Lambda(m|
n,r)}\xy_\la\mathcal{H}_{R}.
\end{equation}
By the definition in \cite{DR}, the endomorphism algebra
$$\sS_R(m|n,r )=\End_{\mathcal{H}_R}(\fT_R(m|
n,r ))$$
is called a $q$-{\it Schur superalgebra} whose $\mathbb Z_2$-graded structure is given by
$$\sS_R(m|n,r )_i=\bigoplus_{\la,\mu\in\La(m|n,r)\atop |\la^{(1)}|+|\mu^{(1)}|\equiv i(\text{mod}2)}\Hom_{\sH_R}(\xy_\la\sH_R,\xy_\mu\sH_R)\qquad(i=0,1).$$ We will use the notation $\sS(m|n,r)$ to denote the $\up^2$-Schur algebra over $\sZ$.

We now describe a characteristic-free basis for $\sS_R(m|n,r )$.

 For
$\la,\mu\in\Lambda(m|n,r)$, let
\begin{equation}\label{Dcirc}
\mathcal{D}^\circ_{\la\mu}=\{d\in\mathcal{D}_{\la\mu}\mid
\fS^d_{\la^{(0)}}\cap \fS_{\mu^{(1)}}=1,\fS^d_{\la^{(1)}}\cap
\fS_{\mu^{(0)}}=1\}.
\end{equation}
This set is the super version of the usual $\sD_{\la\mu}$.
We need the following subsets of the $(m+n)\times(m+n)$ matrix ring $M_{m+n}(\mathbb N)$ over $\mathbb N$:
\begin{equation}\label{M(m|n)}
\aligned
M(m|n,r)&=\{\jmath(\la,d,\mu)\mid\la,\mu\in\La(m|n,r),d\in\sD_{\la\mu}^\circ\},\\
M(m|n)&=\bigcup_{r\geq0}M(m|n,r)\subseteq M_{m+n}(\mathbb N).\endaligned
\end{equation}

Following \cite[(5.3.2)]{DR}, define, for $\lambda,\mu\in\Lambda(m|n,r)$ and
$d\in\mathcal{D}^\circ_{\lambda\mu}$,
\begin{equation}\label{double coset}
T_{\fS_\lambda d \fS_\mu}:=\xy_\lambda T_dT_{\sD_{\nu}\cap W_\mu}=T_{\sD_{\nu'}\cap W_\la}T_d\xy_\mu,
\end{equation}
where $\nu=\la d\cap\mu$, $\nu'=\mu{d^{-1}}\cap\la$, and 
$T_D=\sum_{w_0\in D_0,w_1\in D_1}T_{w_0}(-q)^{-\ell(w_1)}T_{w_1}$ for any $D\subseteq W_\eta$ ($\eta=\la$ or $\mu$) with $D_i=D\cap W_{\eta^{(i)}}$ (cf. \cite[(5.3.2)]{DR}).%\footnote{The superscript $^\pm$ is not used there.}).

The element $T_{\fS_\lambda d \fS_\mu}$ is used to define an $\mathcal{H}_R$-module homomorphism $\phi_{\la\mu}^d$ on $\fT_R(m|n,r)$:
$$\phi_{\la\mu}^d(\xy_\alpha h)=\delta_{\mu,\alpha}T_{\fS_\lambda d
\fS_\mu}h, \forall \alpha\in\Lambda(m|
n,r),h\in\mathcal{H}.$$

%$\psi^d_{\lambda\mu}$ such that
%$$(x_{\alpha^{(0)}}y_{\alpha^{(1)}}h)\psi_{\mu,\la}^d=\delta_{\mu,\alpha}T_{\fS_\lambda d
%W_\mu}h, \forall \alpha\in\Lambda(m|
%n,r),h\in\mathcal{H}.$$
%Note that we changed the left hand notation $\phi^d_{\lambda\mu}$ used in \cite[(5.7.1)]{DR}
%to the right hand notation $\psi^d_{\mu,\la}$, where

The first assertion of the following result is given in {\cite[5.8]{DR}}, while the  last assertion for the nonquantum case was observed in \cite[\S3.1]{HKN}. Write $\phi_A:=\phi^d_{\lambda\mu}$ if $A=\jmath(\la,d,\mu)$.
\begin{lemma}\label{DR5.8} 
The set $\{\phi_A\mid
A\in M(m|n,r)\}$
forms an $R$-basis for $\sS_R(m|n,r )$. Hence, $\sS_R(m|n,r )\cong \sS(m|n,r )\otimes_{\sZ} R$. Moreover, there is an $R$-algebra isomorphism 
$$\sS_R(m|n,r)\cong\sS_R(n|m,r).$$
\end{lemma}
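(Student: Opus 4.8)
The plan is to prove the three assertions in turn, the first being quoted from \cite{DR} and the remaining two requiring genuine argument. For the $R$-basis claim, I would invoke \cite[5.8]{DR}: the homomorphisms $\phi^d_{\la\mu}$ are indexed by triples $(\la,d,\mu)$ with $d\in\sD^\circ_{\la\mu}$, and the bijection $\jmath$ of \eqref{jmath} restricts to a bijection between such triples and $M(m|n,r)$ by the very definition \eqref{M(m|n)}; hence $\{\phi_A\mid A\in M(m|n,r)\}$ is precisely the basis furnished by Dipper--Rui. For the base change isomorphism $\sS_R(m|n,r)\cong\sS(m|n,r)\otimes_\sZ R$, the key point is that the structure constants of $\sS_R(m|n,r)$ in the $\phi_A$-basis lie in $\sZ$ and are independent of $R$; this follows because $T_{\fS_\la d\fS_\mu}$ and the action of $\sH_R$ on $\fT_R(m|n,r)$ are all defined integrally over $\sZ$ (the multiplication formulas of Theorem \ref{KMF} make this explicit, but one does not need them here — only that everything is defined over $\sZ$ and that $\fT_R(m|n,r)=\fT_\sZ(m|n,r)\otimes_\sZ R$, $\sH_R=\sH\otimes_\sZ R$ compatibly). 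Since both sides are free $R$-modules on bases indexed by $M(m|n,r)$ and the natural map $\sS(m|n,r)\otimes_\sZ R\to\sS_R(m|n,r)$ sends basis to basis, it is an isomorphism; one should be slightly careful that $\End$ commutes with this particular base change, which holds because $\fT_\sZ(m|n,r)$ is $\sZ$-free of finite rank and a direct sum of the cyclic modules $\xy_\la\sH$.

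For the isomorphism $\sS_R(m|n,r)\cong\sS_R(n|m,r)$, the natural idea is to produce an involution on the combinatorial data that swaps the roles of the even and odd indices. Concretely, let $\tau$ be the order-reversing-within-blocks relabelling that interchanges the first $m$ indices with the last $n$: on a composition $\la=(\la^{(0)}\mid\la^{(1)})\in\La(m|n,r)$ it gives $\la^\tau=(\la^{(1)}\mid\la^{(0)})\in\La(n|m,r)$, and on a matrix $A\in M(m|n,r)$ it gives the matrix $A^\tau$ obtained by the corresponding simultaneous row-and-column block swap. One checks that $\tau$ carries $M(m|n,r)$ bijectively onto $M(n|m,r)$ — this uses that the defining conditions in \eqref{Dcirc}, namely $\fS^d_{\la^{(0)}}\cap\fS_{\mu^{(1)}}=1$ and $\fS^d_{\la^{(1)}}\cap\fS_{\mu^{(0)}}=1$, are symmetric under exchanging the superscripts $(0)\leftrightarrow(1)$. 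I would then define $\Psi:\sS_R(m|n,r)\to\sS_R(n|m,r)$ on the basis by $\phi_A\mapsto\phi_{A^\tau}$ and show it is an algebra isomorphism by comparing structure constants on the two sides; equivalently, and more conceptually, one observes that replacing $q$-symmetrisers $x$ by $q$-antisymmetrisers $y$ throughout (i.e.\ swapping $\xy_\la\leftrightarrow\yx_{\la^\tau}$) intertwines the two tensor spaces up to an $\sH_R$-module isomorphism, so the induced map on endomorphism algebras is an isomorphism.

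The main obstacle is the last isomorphism: although the combinatorial bijection $A\mapsto A^\tau$ is transparent, verifying that $\Psi$ is multiplicative requires matching the structure constants $g_{A,B}^{C}$ of $\sS_R(m|n,r)$ with those of $\sS_R(n|m,r)$, and these constants involve the signs $(-q)^{-\ell(w)}$ attached to the odd parabolic factors in the definition \eqref{double coset} of $T_{\fS_\la d\fS_\mu}$ and in $T_D$. One has to check that swapping even and odd blocks consistently turns every $x_{\la^{(0)}}$-type sum into the corresponding $x_{\la^{(1)}}$-type sum and every $y$-type sum likewise, with no spurious sign discrepancy; the cleanest route is probably to exhibit an explicit $\sH_R$-module isomorphism $\fT_R(m|n,r)\xrightarrow{\sim}\fT_R(n|m,r)$ (sending $\xy_\la\sH_R$ to $\xy_{\la^\tau}\sH_R$ via a suitable normalisation) and take the induced map on $\End_{\sH_R}$, which is automatically an algebra isomorphism, and only afterwards identify its matrix with $\phi_A\mapsto\phi_{A^\tau}$. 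I expect this to be routine once the right module isomorphism is written down, but it is the step where the bookkeeping of parities and Hecke-algebra signs must be done carefully.
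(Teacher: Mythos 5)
Your treatment of the first two assertions is fine (the paper simply cites \cite[5.8]{DR} for the basis and asserts the base change without proof, so your integrality argument is if anything more explicit than what is printed). The gap is in the last isomorphism, precisely at the step you call the ``cleanest route''. There is no $\sH_R$-module isomorphism $\fT_R(m|n,r)\to\fT_R(n|m,r)$ sending $\xy_\la\sH_R$ to $\xy_{\la^\tau}\sH_R$, with or without normalisation: for $\la=((r)|0,\dots,0)$ the summand $\xy_\la\sH_R=x_{(r)}\sH_R$ is the trivial (index) module, while $\xy_{\la^\tau}\sH_R=y_{(r)}\sH_R$ is the sign module, and these are non-isomorphic $\sH_R$-modules for $r\geq 2$ whenever $q\neq -1$ in the relevant sense. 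Swapping the even and odd blocks genuinely exchanges $q$-symmetrisers with $q$-antisymmetrisers, so no $\sH_R$-linear map can do the job, and your fallback of matching structure constants by hand would run into the same obstruction in disguise.

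The missing idea is to make the isomorphism \emph{semilinear} over the involution $\varphi$ of $\sH_R$ defined by $\varphi(T_s)=-qT_s^{-1}=(q-1)-T_s$. One has $\varphi(x_\la)=q^{\ell(w_{0,\la})}y_\la$, hence $\varphi(\xy_\la)=q^{\ell(w_{0,\la^{(0)}})-\ell(w_{0,\la^{(1)}})}\yx_\la$, so $\xy_\la h\mapsto\varphi(\xy_\la h)$ is an isomorphism $(\xy_\la\sH_R)^\varphi\to\yx_\la\sH_R$ of $\sH_R$-modules, where the superscript denotes the action twisted through $\varphi$. Summing over $\la$ and identifying $\bigoplus_\la\yx_\la\sH_R$ with $\fT_R(n|m,r)$ via your block-swap $\tau$ (this relabelling is the only place your bijection $A\mapsto A^\tau$ is needed) gives $\fT_R(m|n,r)^\varphi\cong\fT_R(n|m,r)$. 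Since twisting a module by an algebra automorphism does not change its endomorphism ring as an algebra, $\End_{\sH_R}(\fT_R(m|n,r))=\End_{\sH_R}(\fT_R(m|n,r)^\varphi)\cong\End_{\sH_R}(\fT_R(n|m,r))$, which is the asserted isomorphism with no bookkeeping of signs or structure constants required. This is exactly the paper's argument; with the twist inserted, the rest of your outline goes through.
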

\begin{proof} We only need to prove the last assertion.
The Hecke algebra $\sH_R$ admits an $R$-algebra involutory automorphism $\varphi$ sending $T_s$ to $-qT_s^{-1}=(q-1)-T_s$ for all $s\in S$. Since $\varphi(x_\la)=q^{\ell(w_{0,\la})}y_\la$, where $w_{0,\la}$ is the longest element in $W_\la$ (see, e.g., \cite[(7.6.2)]{DDPW}), we have $\varphi(\xy_\la)=\varphi(x_{\la^{(0)}}y_{\la^{(1)}})=q^{\ell(w_{0,\la^{(0)}})-\ell(w_{0,\la^{(1)}})}y_{\la^{(0)}}x_{\la^{(1)}}$. If we denote by $(\xy_\la\sH_R)^\varphi$ the module obtained by twisting the action on $\xy_\la\sH_R$ by $\varphi$, i.e., $(\xy_\la h)*h'=(\xy_\la h)\varphi(h')$ for all $h,h'\in\sH_R$, then the map
$$\Phi_\la:(\xy_\la\sH_R)^\varphi\rightarrow \yx_\la\sH_R,\xy_\la h\mapsto\varphi(\xy_\la h)$$
is an $\sH_R$ module isomorphism. These $\Phi_\la$ induce an $\sH_R$ module isomorphism
$\Phi: \fT_R(m|n,r)^\varphi\longrightarrow\fT_R(n|m,r).$
Now the required isomorphism follows.
\end{proof}

\section{Decomposing products of double cosets}

Throughout the section, let $W$ be the symmetric group and let $n,r$ be positive integers. We also fix the following notation in this section:
\begin{equation}\label{notn1}
\left\{\begin{aligned}
M&=(m_{ij})\in M(n,r)\text{ with }\jmath^{-1}(M)=(\la,d,\mu),\; d_M:=d,\\
\nu_M&:=\la d\cap\mu=(m_{1,1},m_{2,1},\cdots,m_{n,1},\cdots,m_{1,n},m_{2,n},\cdots,m_{n,n}),\\
\sigma_{i,j}&=\sum_{k=1}^{j-1}\sum_{h=1}^nm_{h,k}+\sum_{k\leq i,l\geq j}m_{k,l},\\
M^+_{h,k}&=M+E_{h,k}-E_{h+1,k}, \text{ if }m_{h+1,k}\geq1,\\
M^-_{h,k}&=M-E_{h,k}+E_{h+1,k},\text{ if }m_{h,k}\geq1.
\end{aligned}\right.
\end{equation}
Moreover, to any sequence $(a_1,a_2,\ldots,a_n)$, we associate its partial sum sequence $(\widetilde a_1,\widetilde a_2,\ldots,\widetilde a_n)$ with $\widetilde a_i=a_1+\cdots+a_i$. Thus, $\widetilde\la_i=\la_1+\cdots+\la_i$ and $\widetilde m_{i,j}$ is the partial sum at the $(i,j)$-position of $\nu_M$. 
We also note that $\sigma_{i,j}=\widetilde \mu_{j-1}+m_{i,j}^\corner,$ where $m_{i,j}^\corner=\sum_{k\leq i,l\geq j}a_{k,l}$.
In particular, $\sigma_{i,1}=m_{i,1}^\corner=\widetilde{\lambda}_{i}.$
%From the definition of $\sigma_{i,j}$, we see
%\begin{equation}\label{ordsig}
%\sigma_{i,j}=\sigma_{i-1,j}+\sum_{k=j}^nm_{i,k}>\sigma_{i-1,j}+m_{i,j}-1.
%\end{equation}

The following result will be proved at the end of the section.
\begin{theorem}\label{prod coset}
Maintain the notation in \eqref{notn1} with $\lambda=(\lambda_1,\cdots,\lambda_n)$ and, for $1\leq h\leq n$, let
$\lambda^{[h^\pm]}:=\la\pm\bse_h\mp\bse_{h+1}=\ro(M^\pm_{h,k})$, where $\bse_i=(\delta_{1,i},\ldots,\delta_{n,i})$. Then
$$\aligned
(\fS_{\lambda^{[h^+]}} 1\fS_\lambda)(\fS_\lambda d_M\fS_\mu)&=\bigcup_{k\atop m_{h+1,k}\geq1}\fS_{\lambda^{[h^+]}} d_{M^+_{h,k}}\fS_\mu,\\
(\fS_{\lambda^{[h^-]}} 1\fS_\lambda)(\fS_\lambda d_M\fS_\mu)&=\bigcup_{k\atop m_{h,k}\geq1}\fS_{\lambda^{[h^-]}} d_{M^-_{h,k}}\fS_\mu.\\
\endaligned
$$
\end{theorem}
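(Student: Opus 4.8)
The strategy is to reduce the claimed identity of products of double cosets to a purely combinatorial statement about shortest double coset representatives in the symmetric group, and then verify it by a careful bookkeeping of partial sums. I would focus on the first identity, involving $\lambda^{[h^+]}$ and $M^+_{h,k}$; the second follows by an entirely parallel argument (or, after the first is established, by applying the first with roles of rows $h$ and $h+1$ interchanged and replacing $M$ by a suitable $M^-$). The key observation is that $\fS_{\lambda^{[h^+]}} 1 \fS_\lambda$ is a ``single-step'' double coset: since $\lambda^{[h^+]} = \lambda + \bse_h - \bse_{h+1}$ differs from $\lambda$ by moving one box from block $h+1$ up to block $h$, the double coset $\fS_{\lambda^{[h^+]}}\backslash W / \fS_\lambda$ through $1$ is ``almost trivial,'' and left multiplication of $\fS_\lambda d_M \fS_\mu$ by it simply redistributes that one extra index among the columns of $M$. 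So the first step is to write down explicitly, in terms of the partial sums $\widetilde\lambda_i$ and $\widetilde m_{i,j}$, which elements of $W$ actually occur in the product set $(\fS_{\lambda^{[h^+]}} 1\fS_\lambda)(\fS_\lambda d_M\fS_\mu)$ as a subset of $W$, and to recognise that set as a union of $\fS_{\lambda^{[h^+]}}$-$\fS_\mu$ double cosets.

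Concretely, I would proceed as follows. First, recall from \eqref{ladmu} and \eqref{ro co} that $d_M$ is characterised by the property that $|{\mathbb N}_i^\lambda \cap d_M({\mathbb N}_j^\mu)| = m_{i,j}$, and that within each block ${\mathbb N}_i^\lambda$ the preimages $d_M^{-1}({\mathbb N}_i^\lambda) \cap {\mathbb N}_j^\mu$ are arranged in increasing order of $j$ (this is what makes $d_M$ the shortest representative). Next, $\fS_\lambda d_M \fS_\mu = \bigsqcup \fS_\lambda d_M w$ over $w$ in the appropriate set of representatives; the whole product with $\fS_{\lambda^{[h^+]}} 1 \fS_\lambda$ equals $\fS_{\lambda^{[h^+]}} \fS_\lambda d_M \fS_\mu = \fS_{\lambda^{[h^+]}} d_M \fS_\mu$ as a set, because $\fS_{\lambda^{[h^+]}} \supseteq \fS_\lambda \cap \fS_{\lambda^{[h^+]}}$ absorbs $\fS_\lambda$ up to the single transposition $s = (\widetilde\lambda_h, \widetilde\lambda_h + 1)$ (or whichever basic transposition straddles the boundary between blocks $h$ and $h+1$ of $\lambda$). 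Thus one must decompose $\fS_{\lambda^{[h^+]}} d_M \fS_\mu$ — a union of right $\fS_\mu$-cosets that is a union of at most a few $(\fS_{\lambda^{[h^+]}}, \fS_\mu)$-double cosets — into its double-coset components. The element that distinguishes the components is where the boundary index $\widetilde\lambda_h + 1$ (the top element of block $h+1$ of $\lambda$, which becomes the bottom ``extra'' element of block $h$ in $\lambda^{[h^+]}$) sits among the $\mu$-columns under $d_M$: if that index lies in $d_M({\mathbb N}_k^\mu)$, i.e. it is counted by $m_{h+1,k}$, then straightening the result back to a shortest double coset representative produces exactly $d_{M^+_{h,k}}$, since one box has moved from the $(h+1,k)$ entry to the $(h,k)$ entry. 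This is where the constraint $m_{h+1,k} \geq 1$ comes from, and where the $M^+_{h,k}$ of \eqref{notn1} is defined.

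The bookkeeping device I would use to make the ``straightening'' precise is the $\sigma_{i,j}$ of \eqref{notn1} together with the identity $\sigma_{i,j} = \widetilde\mu_{j-1} + m_{i,j}^\corner$ already recorded in the text: $\sigma_{i,j}$ is exactly the position, after re-sorting, of the last element of the $(i,j)$-block, so tracking how the $\sigma_{i,j}$ change when a box moves from $(h+1,k)$ to $(h,k)$ tells us which basic transpositions are needed to pass between $d_M$ and $d_{M^+_{h,k}}$, and confirms that $d_{M^+_{h,k}} \in \sD_{\lambda^{[h^+]}\,\mu}$ and that $\ell$ behaves additively so the union on the right is genuinely disjoint. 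The disjointness of the union is immediate once we know each summand is a distinct double coset (distinct matrices $M^+_{h,k}$ give distinct double cosets by the bijection $\jmath$), so the only real content is (a) that every element of the left-hand product lies in one of the listed double cosets, and (b) that each listed double coset is actually hit.

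The main obstacle I anticipate is bookkeeping the shortest-representative condition through the multiplication: one must show that $\fS_{\lambda^{[h^+]}} d_M \fS_\mu$, which a priori is a union of right $\fS_\mu$-cosets, breaks up into double cosets indexed precisely by the columns $k$ with $m_{h+1,k} \geq 1$ and no others — in particular that no ``collisions'' occur (two different choices of $k$ landing in the same double coset) and no spurious extra double cosets appear. Handling this cleanly requires either an explicit description of a transversal of $\fS_{\lambda^{[h^+]}} d_M \fS_\mu / \fS_\mu$ inside $d_M \fS_\mu$, obtained by inserting the single extra index $\widetilde\lambda_h + 1$ into each admissible slot, or a length-counting argument via the $\sigma_{i,j}$ showing $\ell(d_{M^+_{h,k}})$ takes the expected value; I would develop the transversal description, as it simultaneously yields surjectivity onto the listed double cosets and the identification with $d_{M^+_{h,k}}$. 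Once the first identity is in hand, the second is obtained by the symmetric argument, swapping the roles of rows $h$ and $h+1$ and reading the box as moving downward, which replaces $M^+_{h,k}$ by $M^-_{h,k}$ and the condition $m_{h+1,k} \geq 1$ by $m_{h,k} \geq 1$.
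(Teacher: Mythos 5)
Your overall strategy---enumerate a transversal of $\fS_\la$ modulo $\fS_{\la^{[h^+]}}\cap\fS_\la$, track where the ``extra box'' from block $h+1$ of $\la$ lands among the $\mu$-blocks under $d_M$, and straighten each resulting element to a shortest double coset representative---is exactly the route the paper takes. However, your set-level reduction contains a genuine error. You assert $\fS_{\la^{[h^+]}}\fS_\la d_M\fS_\mu=\fS_{\la^{[h^+]}}d_M\fS_\mu$ and then propose to decompose ``the double coset $\fS_{\la^{[h^+]}}d_M\fS_\mu$'' into several double cosets; this is self-contradictory, and the asserted equality would force the right-hand union in the theorem to collapse to a single term. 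The point is that $\fS_{\la^{[h^+]}}\cap\fS_\la=\fS_{\nu'}$ with $\nu'=(\la_1,\dots,\la_h,1,\la_{h+1}-1,\la_{h+2},\dots,\la_n)$, so $\fS_{\la^{[h^+]}}$ does \emph{not} absorb $\fS_\la$ ``up to a single transposition'': the transversal $\sD_{\nu'}\cap\fS_\la$ has $\la_{h+1}$ elements $d_i=s_{\widetilde\la_h+1}s_{\widetilde\la_h+2}\cdots s_{\widetilde\la_h+i}$, $0\le i\le\la_{h+1}-1$, and the left-hand side of the theorem is $\bigcup_i\fS_{\la^{[h^+]}}d_id_M\fS_\mu$. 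Grouping these $\la_{h+1}$ terms into consecutive blocks of sizes $m_{h+1,k}$ is exactly what produces the union over $\{k:m_{h+1,k}\ge1\}$, so getting this transversal right is not a cosmetic issue.

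The second, larger gap is that the entire technical content of the theorem lies in the straightening identity you defer: one must show that $d_id_M\in\fS_{\la^{[h^+]}}d_{M^+_{h,k}}\fS_\mu$ precisely when $\sum_{j<k}m_{h+1,j}\le i<\sum_{j\le k}m_{h+1,j}$. The paper does this in Proposition \ref{case1}, exhibiting $d_id_M=u\,d_{M^+_{h,k}}v$ with $u=s_{\widetilde\la_h}s_{\widetilde\la_h-1}\cdots s_{\widetilde\la_h-b+1}\in\fS_{\la^{[h^+]}}$ and $v=s_{\widetilde m_{h,k}+1}\cdots s_{\widetilde m_{h,k}+p}\in\fS_\mu$, and this computation rests on the explicit block reduced expression for $d_M$ and $d_{M^+_{h,k}}$ (Lemma \ref{d_M}) together with the commutation rules of Lemma \ref{reflection}. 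You correctly identify this as the main obstacle and correctly note that disjointness is automatic once each $d_id_M$ is located in a double coset indexed by a distinct matrix, but the proposal stops at the intention to ``develop the transversal description''; without carrying out that computation (or an equivalent counting of $|{\mathbb N}_i^{\la^{[h^+]}}\cap d_id_M({\mathbb N}_j^\mu)|$ for each $i$), the proof is not complete.
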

We first describe some standard reduced expression for $d_M$.
%If $$, then we set  $M^+_{h,k}=M+E_{h,k}-E_{h+1,k}$. 

%\begin{notation} 
If $m_{i,j}=0$, or $m_{i,j}>0$ but $\sigma_{i-1,j}=\widetilde{m}_{i-1,j}$
(i.e., $m_{i-1,j+1}^\llcorner=0$), set $w_{i,j}=1$; if $m_{i,j}>0$ and $\sigma_{i-1,j}>\widetilde{m}_{i-1,j}$, let
\begin{equation}\label{wij}
\begin{aligned}
w_{i,j}=\,&(s_{\sigma_{i-1,j}}s_{\sigma_{i-1,j}-1}\cdots s_{\widetilde{m}_{i-1,j}+1})\\
&(s_{\sigma_{i-1,j}+1}s_{\sigma_{i-1,j}}\cdots s_{\widetilde{m}_{i-1,j}+2})\cdot\cdots\cdot\\
&(s_{\sigma_{i-1,j}+m_{i,j}-1}s_{\sigma_{i-1,j}+m_{i,j}-2}\cdots s_{\widetilde{m}_{i,j}})
\end{aligned}
\end{equation}
and $w^+_{i,j}=s_{\sigma_{i-1,j}+1}s_{\sigma_{i-1,j}+2}\cdots s_{\sigma_{i-1,j}+m_{i,j}}w_{i,j}$ (and $w^+_{i,j}=1$ if $m_{i,j}=0$).
%\end{notation}
Note that we may rewrite $w^+_{i,j}$ as
\begin{equation}\label{w+ij}
\begin{aligned}
w^+_{i,j}=\,&s_{\sigma_{i-1,j}+1}(s_{\sigma_{i-1,j}}s_{\sigma_{i-1,j}-1}\cdots s_{\widetilde{m}_{i-1,j}+1})\\
&s_{\sigma_{i-1,j}+2}(s_{\sigma_{i-1,j}+1}s_{\sigma_{i-1,j}}\cdots s_{\widetilde{m}_{i-1,j}+2})\cdot\cdots\cdot\\
& s_{\sigma_{i-1,j}+m_{i,j}}(s_{\sigma_{i-1,j}+m_{i,j}-1}s_{\sigma_{i-1,j}+m_{i,j}-2}\cdots s_{\widetilde{m}_{i,j}}).
\end{aligned}
\end{equation}
For example, if $M=\left(\begin{smallmatrix}1&3&2\\2&1&1\\1&0&2\end{smallmatrix}\right)$
then $(\sigma_{ij})=\left(\begin{smallmatrix}6&9&10\\10&11&11\\13&13&13\end{smallmatrix}\right)$, $(\widetilde m_{ij})=\left(\begin{smallmatrix}1&7&10\\3&8&11\\4&8&13\end{smallmatrix}\right)$, and $w_{2,1}=(s_6s_5\cdots s_2)(s_7s_6\cdots s_3)=\left(\begin{smallmatrix}2&3&4&5&6&7&8\\7&8&2&3&4&5&6\end{smallmatrix}\right)$, $w_{3,1}=s_{10}s_9\cdots s_4=\left(\begin{smallmatrix}4&5&6&7&8&9&10&11\\11&4&5&6&7&8&9&10\end{smallmatrix}\right)$, and $w_{2,2}=s_9s_8=\left(\begin{smallmatrix}8&9&10\\10&8&9\end{smallmatrix}\right)$, $w_{3,2}=1$, then
$w_{2,1}w_{3,1}w_{2,2}w_{3,2}=\left(\begin{smallmatrix}1&2&3&4&5&6&7&8&9&10&11&12&13\\1&7&8&11&2&3&4&9&5&6&10&12&13\end{smallmatrix}\right),$ which is $d_M$.

\begin{lemma}[{\cite[Algorithm 2.1]{DG}}]\label{d_M}  Let $M$, $d_M$ and $M^+_{h,k}$ be given as in \eqref{notn1}. Then a reduced expression of $d_M$ is of the form
$$d_M=(w_{2,1}w_{3,1}\cdots w_{n,1})(w_{2,2}w_{3,2}\cdots w_{n,2})\cdots(w_{2,n-1}w_{3,n-1}\cdots w_{n,n-1}).
$$
If $m_{h+1,k}\geq1$, then
\begin{equation*}
\begin{aligned}
d_{M^+_{h,k}}&=(w'_{2,1}w'_{3,1}\cdots w'_{n,1})(w'_{2,2}w'_{3,2}\cdots w'_{n,2})\cdots(w'_{2,n-1}w'_{3,n-1}\cdots w'_{n,n-1}),
\end{aligned}
\end{equation*}
where, for almost all $i,j$,  $w'_{ij}=w_{ij}$, except $w'_{h+1,j}=w^+_{h+1,j}$ for $j<k$ and
\begin{equation}\label{bulcir}
\begin{aligned}
w'_{h,k}=w^\bullet_{h,k}:=\,&w_{h,k}(s_{\sigma_{h-1,k}+m_{h,k}}s_{\sigma_{h-1,k}+m_{h,k}-1}\cdots s_{\widetilde{m}_{h,k}+1}),\\
w'_{h+1,k}=w^\circ_{h+1,k}:=\,&(s_{\sigma_{h,k}+1}s_{\sigma_{h,k}}\cdots s_{\widetilde{m}_{h,k}+2})(s_{\sigma_{h,k}+2}s_{\sigma_{h,k}+1}\cdots s_{\widetilde{m}_{h,k}+3})\cdots\\
&(s_{\sigma_{h,k}+m_{h+1,k}-1}s_{\sigma_{h,k}+m_{h+1,k}-2}\cdots s_{\widetilde{m}_{h+1,k}}).
\end{aligned}
\end{equation}
In particular, $\ell(d_{M^+_{h,k}})=\ell(d_M)+\sum_{j<k}m_{h+1,j}-\sum_{j>k}m_{h,j}.$
\end{lemma}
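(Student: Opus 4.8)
The plan is to read $d_M$ as a concrete ``block-sorting'' permutation and then to get the reduced expressions for $d_M$ and for $d_{M^+_{h,k}}$ as controlled perturbations of the corresponding sorting procedures. Recall that $d_M$ is the unique $d\in W$ which is increasing on each $\mu$-block $\mathbb N^\mu_j=\{\widetilde\mu_{j-1}+1,\dots,\widetilde\mu_j\}$, whose inverse is increasing on each $\lambda$-block $\mathbb N^\lambda_i$, and which satisfies $|\mathbb N^\lambda_i\cap d(\mathbb N^\mu_j)|=m_{i,j}$. Concretely, sweeping the columns $j=1,2,\dots,n$ from left to right, $d_M$ carries the smallest $m_{1,j}$ not-yet-placed entries of $\mathbb N^\mu_j$ (in increasing order) into the lowest slots of $\mathbb N^\lambda_1$ not already used by columns $1,\dots,j-1$, the next $m_{2,j}$ entries into the lowest such slots of $\mathbb N^\lambda_2$, and so on; this is exactly the permutation output by Algorithm~2.1 of \cite{DG}.

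\textbf{Assertion (1).} I would argue in two steps. First, \emph{the product equals $d_M$}: set $\pi^{(j)}:=(w_{2,1}\cdots w_{n,1})\cdots(w_{2,j}\cdots w_{n,j})$. By induction on $j$ one checks that $\pi^{(j)}$ is exactly the permutation which has block-sorted columns $1,\dots,j$ while leaving the entries of $\mathbb N^\mu_{j+1}\cup\cdots\cup\mathbb N^\mu_n$ occupying the remaining slots in increasing order. The inductive step rests on the observation that $w_{i,j}$, read via \eqref{wij} as the composite of its $m_{i,j}$ descending runs, is the cycle that lifts the block of $m_{i,j}$ entries earmarked for $\mathbb N^\lambda_i$ (currently at positions $\widetilde m_{i-1,j}+1,\dots,\widetilde m_{i,j}$) into the $m_{i,j}$ vacant slots $\sigma_{i-1,j}+1,\dots$ of $\mathbb N^\lambda_i$, pushing the $\sigma_{i-1,j}-\widetilde m_{i-1,j}$ intervening entries down by $m_{i,j}$. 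At $j=n-1$ the leftover entries $\mathbb N^\mu_n$ then occupy, in the correct order, exactly the slots column $n$ of $M$ prescribes, so $\pi^{(n-1)}$ meets the three conditions characterising $d_M$. Second, \emph{reducedness}: each run in \eqref{wij} has $\sigma_{i-1,j}-\widetilde m_{i-1,j}$ letters, so the displayed word has $\sum_{i,j}m_{i,j}(\sigma_{i-1,j}-\widetilde m_{i-1,j})$ letters; since $\sigma_{i-1,j}-\widetilde m_{i-1,j}=\sum_{k<i,\,l>j}m_{k,l}$, this total equals $\sum_{k<i,\,l>j}m_{i,j}m_{k,l}$, which is exactly $\ell(d_M)$ (count the inversions of the block-sorting permutation), so the word is reduced; in particular each $w_{i,j}$ is reduced as written and $\ell(w_{i,j})=m_{i,j}(\sigma_{i-1,j}-\widetilde m_{i-1,j})$.

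\textbf{Assertion (2).} Since $E_{h,k}-E_{h+1,k}$ has zero column sums, $\co(M^+_{h,k})=\mu$, and a direct inspection of \eqref{notn1} shows that in passing from $M$ to $M^+_{h,k}$ the only changes among the data feeding \eqref{wij} are: $\widetilde m_{h,k}$ grows by $1$, $\sigma_{h,j}$ grows by $1$ for every $j\le k$, $m_{h,k}$ grows by $1$, and $m_{h+1,k}$ drops by $1$. Consequently $w_{i,j}$ is unchanged unless $(i,j)=(h,k)$, or $i=h+1$ and $j\le k$. For $i=h+1$ and $j<k$ the unit increase of $\sigma_{h,j}$ prepends one generator to each of the $m_{h+1,j}$ runs of $w_{h+1,j}$, producing precisely the form \eqref{w+ij} of $w^+_{h+1,j}$; at $(h,k)$ the increases of $m_{h,k}$ and $\widetilde m_{h,k}$ append one further descending run to $w_{h,k}$, giving $w^\bullet_{h,k}$; and at $(h+1,k)$ the joint changes of $\sigma_{h,k},\widetilde m_{h,k},m_{h+1,k}$ turn $w_{h+1,k}$ into $w^\circ_{h+1,k}$, as displayed in \eqref{bulcir} (the degenerate cases where some factor collapses to $1$ are checked directly). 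Feeding these into the reduced expression of assertion (1) applied to $M^+_{h,k}$ yields the asserted formula, still reduced.

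\textbf{Assertion (3).} By assertion (1) applied to both matrices, $\ell(d_{M^+_{h,k}})-\ell(d_M)$ is the change in letter count, namely $\sum_{j<k}\bigl(\ell(w^+_{h+1,j})-\ell(w_{h+1,j})\bigr)+\bigl(\ell(w^\bullet_{h,k})-\ell(w_{h,k})\bigr)+\bigl(\ell(w^\circ_{h+1,k})-\ell(w_{h+1,k})\bigr)$. Using $\ell(w_{i,j})=m_{i,j}(\sigma_{i-1,j}-\widetilde m_{i-1,j})$ and the four changes above, these three terms equal $\sum_{j<k}m_{h+1,j}$, $\sum_{i<h,\,l>k}m_{i,l}$ and $-\sum_{i\le h,\,l>k}m_{i,l}$, whose sum telescopes to $\sum_{j<k}m_{h+1,j}-\sum_{l>k}m_{h,l}$. (Equivalently, one may evaluate the increment of the quadratic form $\sum_{k'<i,\,l>j}m_{i,j}m_{k',l}$ under $M\mapsto M+E_{h,k}-E_{h+1,k}$ directly.) The main obstacle is the induction inside assertion (1): writing down the partial products $\pi^{(j)}$ and verifying, without off-by-one slips, that the composite of the runs in \eqref{wij} really is the stated block cycle; once that is secured, assertions (2) and (3) are just bookkeeping on how $\sigma_{i,j}$ and $\widetilde m_{i,j}$ react to the move $E_{h+1,k}\mapsto E_{h,k}$.
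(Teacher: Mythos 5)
Your proposal is correct. Note that the paper itself gives no proof of this lemma — it is quoted from \cite[Algorithm 2.1]{DG} and only illustrated by an example — so there is no in-paper argument to compare against; your write-up is a self-contained verification along the lines one would expect the cited algorithm to follow. The three pillars all check out: (i) the characterisation of $d_M$ as the unique element of $\sD_{\la\mu}$ that is increasing on each $\mathbb N^\mu_j$ with inverse increasing on each $\mathbb N^\la_i$, together with the induction on partial column products $\pi^{(j)}$, correctly identifies the displayed word with $d_M$ (one does need the care you flag: since the column-$j$ factor acts first in the composition, the targets $\sigma_{i-1,j}+1,\dots,\sigma_{i-1,j}+m_{i,j}$ are intermediate positions that $\pi^{(j-1)}$ then carries onto the next $m_{i,j}$ vacant slots of $\mathbb N^\la_i$ — this is exactly where the inductive hypothesis on the ``leftover entries in increasing order'' is used); (ii) the letter count $\sum_{i,j}m_{i,j}(\sigma_{i-1,j}-\widetilde m_{i-1,j})=\sum_{k<i,\,l>j}m_{i,j}m_{k,l}$ matches the inversion number of $d_M$, giving reducedness cleanly; (iii) the bookkeeping for $M\mapsto M^+_{h,k}$ ($\widetilde m_{h,k}$ and $m_{h,k}$ up by $1$, $m_{h+1,k}$ down by $1$, $\sigma_{h,j}$ up by $1$ exactly for $j\le k$) reproduces $w^\bullet_{h,k}$, $w^\circ_{h+1,k}$ and $w^+_{h+1,j}$ precisely as in \eqref{bulcir} and \eqref{w+ij}, and the resulting length increment telescopes to $\sum_{j<k}m_{h+1,j}-\sum_{j>k}m_{h,j}$ as claimed.
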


\begin{remark}\label{dmw}
(1) We display the factors $w_{i,j}$ of $d_M$ through a matrix notation:
\begin{equation}\label{d_Mm}
d_M={\left(\begin{array}{cccc}
w_{2,1}&w_{2,2}&\cdots&w_{2,n-1}\\
w_{3,1}&w_{3,2}&\cdots&w_{3,n-1}\\
\vdots&\vdots&\cdots&\vdots\\
w_{n,1}&w_{n,2}&\cdots&w_{n,n-1}
\end{array}\right),}
\end{equation}
where $d_M$ is simply a product of the entries down column 1, then down column 2, and so on.
%column products from left to right, each of which is a product from top to bottom.
Note that $w_{i,j}=1$ whenever $m_{i,j}=0$ or $m^\llcorner_{i-1,j+1}=0$.

%With this notation, we have
%\begin{equation}\label{d_M+}d_{M^+_{h,k}}:=\left(\begin{array}{ccccccc}w_{2,1}&\cdots&w_{2,k-1}&w_{2,k}&w_{2,k+1}&\cdots&w_{2,n-1}\\\vdots&\cdots&\vdots&\vdots&\vdots&\cdots&\vdots\\ w_{h,1}&\cdots&w_{h,k-1}&w^\bullet_{h,k}&w_{h,k+1}&\cdots&w_{h,n-1}\\
%w^+_{h+1,1}&\cdots&w^+_{h+1,k-1}&w^\circ_{h+1,k}&w_{h+1,k+1}&\cdots&w_{h+1,n-1}\\\vdots&\cdots&\vdots&\vdots&\vdots&\cdots&\vdots\\ w_{n,1}&\cdots&w_{n,k-1}&w_{n,k}&w_{n,k+1}&\cdots&w_{n,n-1}\end{array}\right).
%\end{equation}

(2) Note that a product of the form $s_{h-1}s_{h-2}\cdots s_{k}$ for $h>k$ is in fact the cycle permutation $h\to h-1\to\cdots\to  k+1\to k\to h$.
 Thus, each $w_{i,j}$ is a product of cycle permutations. 
%each partial column product $w_{2,j}w_{3,j}\cdots w_{h,j}$ is the product of the form
%$$\bigcirc(h_1, k+1){\bigcirc(h_2, k+2)}\cdots{\bigcirc(h_l, k+l)},$$
%for some $h_1<h_2<\cdots <h_l.$ So 
Note also that the largest number permuted (or moved) by the partial column product $w_{2,j}w_{3,j}\cdots w_{h,j}$ is $\sigma_{h-1,j}+m_{h,j}$.
\end{remark}

\begin{lemma}\label{reflection} \begin{itemize}
\item[(1)] For any non-negative integers $k,i,h$ with $0<k\leq i<h<r$,
$$s_i(s_{h}s_{h-1}\cdots s_{k})=(s_{h}s_{h-1}\cdots s_{k})s_{i+1}.$$
Hence, for $0<k\leq i< h_1<h_2<\cdots<h_l<r$,
$$\begin{aligned}
s_i&(s_{h_1}s_{h_1-1}\cdots s_{k})(s_{h_2}s_{h_2-1}\cdots s_{k+1})\cdots (s_{h_l}s_{h_l-1}\cdots s_{k+l-1})\\
=&(s_{h_1}s_{h_1-1}\cdots s_{k})(s_{h_2}s_{h_2-1}\cdots s_{k+1})\cdots (s_{h_l}s_{h_l-1}\cdots s_{k+l-1})s_{i+l}.
\end{aligned}$$

\item[(2)] With the notation given in \eqref{notn1} and \eqref{wij}, if $\sigma_{h-1,j}+m_{h,j}<l<\sigma_{h,j}$ and $l\geq\widetilde m_{h,j}+1$, then
$$s_l(w_{2,j}w_{3,j}\cdots w_{n,j})=(w_{2,j}w_{3,j}\cdots w_{n,j})s_{l+\sum_{i=h+1}^nm_{i,j}}.$$
\item[(3)] For any $1<k\leq n$, if $0<x\leq m_{h,k}$ and assume $\sum_{j=1}^{k-1}m_{h,j}+x<\la_h$, then
$$\begin{aligned}
s_{\sigma_{h-1,1}+\sum_{j=1}^{k-1}m_{h,j}+x} &(w_{2,1}\cdots w_{n,1})\cdots (w_{2,k-1}\cdots w_{n,k-1})\\
=\,&(w_{2,1}\cdots w_{n,1})\cdots (w_{2,k-1}\cdots w_{n,k-1})s_{\sigma_{h-1,k}+x}.\end{aligned}$$
\end{itemize}
\end{lemma}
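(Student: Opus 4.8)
\textbf{Proof plan for Lemma \ref{reflection}.}
The three parts are all ``commutation of a simple reflection past a fixed word'' identities, and the plan is to reduce (2) and (3) to (1) by bookkeeping on the cycle structure recorded in Remark \ref{dmw}.

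For part (1), the single-reflection identity $s_i(s_hs_{h-1}\cdots s_k)=(s_hs_{h-1}\cdots s_k)s_{i+1}$ for $k\le i<h$ is a direct braid/commutation computation: $s_i$ commutes with every $s_j$ in the chain with $|i-j|\ge 2$, so one slides $s_i$ rightward until it meets $s_{i+1}s_i$; using $s_is_{i+1}s_i=s_{i+1}s_is_{i+1}$ and then commuting $s_{i+1}$ (now on the right of $s_i$) past the shorter tail $s_{i-1}\cdots s_k$ yields the claim. Alternatively, and more cleanly, interpret $s_hs_{h-1}\cdots s_k$ as the cycle $(h,h-1,\ldots,k)$ sending $j\mapsto j-1$ for $k<j\le h$ and $k\mapsto h$, as noted in Remark \ref{dmw}(2), and simply verify the permutation identity $w^{-1}s_i w = s_{i+1}$ pointwise on $\{i,i+1\}$: since $k\le i<h$, $w$ maps $i+1\mapsto i$ and $i+2\mapsto i+1$... one checks the indices carefully to land on $s_{i+1}$. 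The iterated version then follows by an immediate induction on $l$: apply the single identity to push $s_i$ past $(s_{h_1}\cdots s_k)$, which produces $s_{i+1}$; since $i+1\le k+1\le h_2$ and the second factor starts at $s_{k+1}$ with top $h_2$, the hypothesis of the single identity is again met (with $k$ replaced by $k+1$, $i$ by $i+1$), and repeating gives $s_{i+l}$ at the end.

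For part (2), write $w_{2,j}w_{3,j}\cdots w_{n,j}$ as the concatenation of the blocks $w_{i,j}$ from \eqref{wij}; each $w_{i,j}$ is itself a product of descending runs $s_{\sigma_{i-1,j}+t-1}s_{\sigma_{i-1,j}+t-2}\cdots s_{\widetilde m_{i-1,j}+t}$ for $t=1,\ldots,m_{i,j}$. The strategy is to feed $s_l$ through block by block. Under the hypothesis $\sigma_{h-1,j}+m_{h,j}<l<\sigma_{h,j}$ and $l\ge\widetilde m_{h,j}+1$, the index $l$ lies strictly below all runs making up $w_{2,j},\ldots,w_{h,j}$ in the sense needed to commute outright (the runs of $w_{i,j}$ for $i\le h$ involve only reflections indexed $\le \sigma_{h-1,j}+m_{h,j}-1<l-1$, actually $\le\sigma_{i-1,j}+m_{i,j}-1$) — so $s_l$ commutes past $w_{2,j}\cdots w_{h,j}$ unchanged, EXCEPT that $l$ may collide with the top of the runs in $w_{h,j}$; here one uses the refined range $l\ge\widetilde m_{h,j}+1$ together with $l<\sigma_{h,j}$ to see $s_l$ actually passes through $w_{h,j}$ picking up a shift. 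Hmm — the cleanest route is: for each $i>h$, each descending run of $w_{i,j}$ is $(s_{\sigma_{i-1,j}+t-1}\cdots s_{\widetilde m_{i-1,j}+t})$ with $\widetilde m_{i-1,j}+t\le l$ (because $l<\sigma_{h,j}\le\widetilde m_{i-1,j}$ for $i-1\ge h$... wait one needs $l<\widetilde m_{i-1,j}$, which holds since $l<\sigma_{h,j}=\widetilde\mu_{j-1}+m^\corner_{h,j}$ and $\widetilde m_{i-1,j}\ge\widetilde m_{h,j}$ whenever $i-1\ge h$, and $l\ge\widetilde m_{h,j}+1$ handles the $i-1=h$ edge). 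Once $l$ is between $\widetilde m_{i-1,j}+t$ and $\sigma_{i-1,j}+t-1$ for a run, apply part (1) (iterated form, across the $m_{i,j}$ runs of $w_{i,j}$, which are exactly of the staircase shape in (1)) to get a shift by $m_{i,j}$. Doing this for $i=h+1,\ldots,n$ accumulates the total shift $\sum_{i=h+1}^n m_{i,j}$, proving (2).

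For part (3), the statement commutes $s_{\sigma_{h-1,1}+\sum_{j<k}m_{h,j}+x}$ past the first $k-1$ full column-products. The plan is an induction on $k$ using (2) as the engine for a single column, after checking that the incoming index satisfies (2)'s hypothesis for column $j$ at each stage. For column $1$: the incoming index is $\sigma_{h-1,1}+(\text{partial sum})+x$; since $0<x\le m_{h,1}$ portion of the partial sum is cut off appropriately, the index sits strictly above $\sigma_{h-1,1}+m_{h,1}$ only if $k>\ldots$ — one tracks that after passing column $j$, part (2) turns the index into itself plus $\sum_{i>h}m_{i,j}$, which by definition of $\sigma$ and $\widetilde m$ is exactly $\sigma_{h-1,j+1}+(\text{adjusted partial sum})+x$, feeding the induction; the terminal output is $\sigma_{h-1,k}+x$ as claimed. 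The global bound $\sum_{j<k}m_{h,j}+x<\la_h$ is what guarantees the index never exceeds $\widetilde m_{h,j}$ prematurely, i.e. it keeps us in the ``strictly inside the $h$-th row block'' regime where (2) applies; and $x>0$ keeps us from dropping below $\widetilde m_{h,j}+1$.

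\textbf{Main obstacle.} The genuine work is not (1) — that is a one-line braid computation — but the index bookkeeping in (2), namely verifying that the single hypothesis $\sigma_{h-1,j}+m_{h,j}<l<\sigma_{h,j}$, $l\ge\widetilde m_{h,j}+1$ is \emph{exactly} strong enough to (i) let $s_l$ commute freely past $w_{2,j},\ldots,w_{h,j}$ and (ii) put $s_l$ in the staircase range required by part (1) for \emph{every} run of \emph{every} $w_{i,j}$ with $i>h$, with the shifts telescoping to $\sum_{i>h}m_{i,j}$. The analogous, slightly more involved check — that the shift produced by column $j$ in (2) converts the index into precisely the right index to re-apply (2) to column $j+1$ — is the crux of (3). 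I would organize this by writing $l=\widetilde m_{h,j}+y$ with $1\le y\le \sigma_{h,j}-\widetilde m_{h,j}-1$ is too coarse; better to track $l$ relative to the $h$-th row partial sums directly and use $\sigma_{i-1,j}=\widetilde m_{i-1,j}+m^\corner_{i-1,j+1}$ from \eqref{notn1} to rewrite each bound, so that the ``$+\sum_{i>h}m_{i,j}$'' shift is visibly the difference $\sigma_{h-1,j+1}-\sigma_{h-1,j}$ minus what was already consumed. Everything else is routine.
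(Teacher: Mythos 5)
Your plan coincides with the paper's proof: parts (1) and (2) are treated there as straightforward commutation/staircase computations exactly as you sketch, and part (3) is proved by the same induction on the number of columns passed, applying (2) once per column and checking that the accumulated shift $\sum_{i>h}m_{i,t}$ converts the index into $\sigma_{h-1,t+1}+\sum_{j=t+1}^{k-1}m_{h,j}+x$, ready for the next column. The only blemish is your parenthetical inequality $\sigma_{h,j}\leq\widetilde m_{i-1,j}$ in part (2), which is false and also not what is needed; the correct bookkeeping (which you essentially describe) is that after passing $w_{h+1,j},\ldots,w_{i-1,j}$ the index has become $l+\sum_{h<i'<i}m_{i',j}$, and this lies in the staircase range $[\widetilde m_{i-1,j}+1,\sigma_{i-1,j}-1]$ required by part (1) because $\sigma_{i-1,j}-\sigma_{h,j}\geq\sum_{h<i'<i}m_{i',j}$.
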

\begin{proof}
The proof for the first two assertions is straightforward. We now prove (3).

Consider the product $\prod_t$ of first $t$ columns of $d_M$:
$$\Pi_t=(w_{2,1}\cdots w_{h-1,1}w_{h,1}w_{h+1,1}\cdots w_{n,1})\cdots\cdots(w_{2,t}\cdots w_{h-1,t}w_{h,t}w_{h+1,t}\cdots w_{n,t}).$$
We claim for all $t<k$ that 
\begin{equation}\label{*}
s_{\sigma_{h-1,1}+\sum_{j=1}^{k-1}m_{h,j}+x}\cdot\Pi_t=\Pi_t\cdot s_{\sigma_{h-1,t+1}+\sum_{j=t+1}^{k-1}m_{h,j}+x}.
\end{equation}
Thus, taking $t=k-1$ gives the assertion (3).

We prove \eqref{*} by induction on $t$.
If $t=1$, then $x>0$ implies
$$l=\sigma_{h-1,1}+\sum_{j=1}^{k-1}m_{h,j}+x>\sigma_{h-1,1}+m_{h,1}.$$
As the largest number permuted by $w_{2,1}\cdots w_{h,1}$ is $\sigma_{h-1,1}+m_{h,1}$, we have
\begin{equation}\label{t=1}
s_l (w_{2,1}\cdots w_{h,1})=(w_{2,1}\cdots w_{h,1})s_l.
\end{equation}

Now we consider 
$s_l (w_{h+1,1}\cdots w_{n,1})$. Assume $w_{h+1,1}\neq 1$ (and so $m_{h+1,1}>0$).
Since $k>1$ and 
$\widetilde m_{h,1}+1\leq l=\sigma_{h-1,1}+\sum_{j=1}^{k-1}m_{h,j}+x<\sigma_{h-1,1}+\la_h= \sigma_{h,1}$, 
by (2), $s_l w_{h+1,1}=w_{h+1,1}s_{l+m_{h+1,1}}$ and, by an inductive argument as above,
\begin{equation*}\label{co1}
\begin{aligned}
s_l w_{h+1,1}w_{h+2,1}\cdots w_{n,1}
=w_{h+1,1}w_{h+2,1}\cdots w_{n,1}s_{l+\sum_{i=h+1}^nm_{i,1}}.
\end{aligned}
\end{equation*}
But
$l+\sum_{i=h+1}^nm_{i,1}=\sigma_{h-1,2}+\sum_{j=2}^{k-1}m_{h,j}+x$. This proves \eqref{*} for $t=1$.
%So, 
%\begin{equation*}
%\begin{aligned}
%&s_{\sigma_{h-1,1}+\sum_{j=1}^{k-1}m_{h,j}+p}(w_{h+1,1}w_{h+2,1}\cdots w_{n,1})=(w_{h+1,1}w_{h+2,1}\cdots w_{n,1})s_{\sigma_{h-1,2}+\sum_{j=2}^{k-1}m_{h,j}+p},
%\end{aligned}\end{equation*}

Suppose now $t>1$ and \eqref{*} is true for $t-1$. That is, assume 
\begin{equation*}
\begin{aligned}
s_{\sigma_{h-1,1}+\sum_{j=1}^{k-1}m_{h,j}+x}&(w_{2,1}\cdots w_{n,1})\cdots (w_{2,t-1}\cdots w_{n,t-1})\\
=\,&(w_{2,1}\cdots w_{n,1})\cdots (w_{2,t-1}\cdots w_{n,t-1})s_{\sigma_{h-1,t}+\sum_{j=t}^{k-1}m_{h,j}+x}.
\end{aligned}
\end{equation*}

Since $\sigma_{h-1,t}+\sum_{j=t}^{k-1}m_{h,j}+x>\sigma_{h-1,t}+m_{h,t}$ and 
$$\sigma_{h,t}=\sigma_{h-1,t}+\sum_{j=t}^nm_{h,j}>\sigma_{h-1,t}+\sum_{j=t}^{k-1}m_{h,j}+x\geq\widetilde{m}_{h,t}+1,$$
applying (2) with $l=\sigma_{h-1,t}+\sum_{j=t}^{k-1}m_{h,j}+x$ gives
$$\aligned
s_l(w_{2,t}\cdots w_{h,t}w_{h+1,t}\cdots w_{n,t})&=(w_{2,t}\cdots w_{h,t})s_l(w_{h+1,t}\cdots w_{n,t})\\
&=(w_{2,t}\cdots w_{h,t}w_{h+1,t}\cdots w_{n,t})s_{l+\sum_{i=h+1}^nm_{i,t}},
\endaligned$$
where
$$l+\sum_{i=h+1}^nm_{i,t}=\sigma_{h-1,t}+\sum_{j=t}^{k-1}m_{h,j}+x+\sum_{i=h+1}^nm_{i,t}=\sigma_{h-1,t+1}+\sum_{j=t+1}^{k-1}m_{h,j}+x.$$
This proves \eqref{*} for $t$ and, hence, (3).
\end{proof}

\begin{corollary}\label{mainw}
For $0<x\leq m_{h,k}$, $l=\sigma_{h-1,1}+\sum_{j=1}^{k-1}m_{h,j}$ with $l+x<\sigma_{h,1}$, we have
\begin{equation*}
\begin{aligned}
&s_{l+x}d_M={\left(\begin{array}{ccccccc}
w_{2,1}&\cdots&w_{2,k-1}&w_{2,k}&w_{2,k+1}&\cdots&w_{2,n-1}\\
\vdots&\vdots&\cdots&\vdots&\vdots&\cdots&\vdots\\
w_{h-1,1}&\cdots&w_{h-1,k-1}&w_{h-1,k}&w_{h-1,k+1}&\cdots&w_{h-1,n-1}\\
w_{h,1}&\cdots&w_{h,k-1}&w^*_{h,k}&w_{h,k+1}&\cdots&w_{h,n-1}\\
w_{h+1,1}&\cdots&w_{h+1,k-1}&w_{h+1,k}&w_{h+1,k+1}&\cdots&w_{h+1,n-1}\\
\vdots&\vdots&\cdots&\vdots&\vdots&\cdots&\vdots\\
w_{n,1}&\cdots&w_{n,k-1}&w_{n,k}&w_{h,k+1}&\cdots&w_{n,n-1}\\
\end{array}\right),}
\end{aligned}
\end{equation*}
where $w^*_{h,k}=s_{\sigma_{h-1,k}+x}w_{h,k}$. In particular, $s_{l+1}s_{l+2}\cdots s_{l+m_{h,k}}d_M$ can be expressed by the same matrix with $w^*_{h,k}=w^+_{h,k}$, the element defined in \eqref{w+ij}.
\end{corollary}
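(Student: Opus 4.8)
The plan is to derive this as a direct consequence of Lemma \ref{reflection}(3) together with the reduced‑expression description of $d_M$ in Lemma \ref{d_M} (cf.\ Remark \ref{dmw}). First I would set $l=\sigma_{h-1,1}+\sum_{j=1}^{k-1}m_{h,j}$ and observe that the hypothesis $l+x<\sigma_{h,1}$ together with $\sigma_{h,1}=\sigma_{h-1,1}+\la_h$ gives exactly the inequality $\sum_{j=1}^{k-1}m_{h,j}+x<\la_h$ needed to apply Lemma \ref{reflection}(3). Writing $d_M$ as the column‑ordered product in \eqref{d_Mm}, namely $d_M=\Pi_{k-1}\cdot(w_{2,k}\cdots w_{n,k})\cdot(w_{2,k+1}\cdots w_{n,k+1})\cdots$, where $\Pi_{k-1}=(w_{2,1}\cdots w_{n,1})\cdots(w_{2,k-1}\cdots w_{n,k-1})$, I would push the transposition $s_{l+x}$ past $\Pi_{k-1}$ using Lemma \ref{reflection}(3): this yields $s_{l+x}\,d_M=\Pi_{k-1}\,s_{\sigma_{h-1,k}+x}\,(w_{2,k}\cdots w_{n,k})(w_{2,k+1}\cdots w_{n,k+1})\cdots$.

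The remaining point is to absorb $s_{\sigma_{h-1,k}+x}$ into the $k$th column. Since $0<x\le m_{h,k}$ we have $\sigma_{h-1,k}+x\le \sigma_{h-1,k}+m_{h,k}$, which is the largest index moved by the partial product $w_{2,k}\cdots w_{h,k}$ (Remark \ref{dmw}(2)); moreover $\sigma_{h-1,k}+x>\sigma_{h-2,k}+m_{h-1,k}$ is strictly larger than any index touched by $w_{2,k}\cdots w_{h-1,k}$, so $s_{\sigma_{h-1,k}+x}$ commutes with $w_{2,k}\cdots w_{h-1,k}$. Hence $s_{\sigma_{h-1,k}+x}(w_{2,k}\cdots w_{n,k})=(w_{2,k}\cdots w_{h-1,k})\bigl(s_{\sigma_{h-1,k}+x}w_{h,k}\bigr)w_{h+1,k}\cdots w_{n,k}$, which is the displayed matrix with $w^*_{h,k}=s_{\sigma_{h-1,k}+x}w_{h,k}$ in the $(h,k)$‑slot and all other entries unchanged. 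For the ``in particular'' clause, I would iterate: applying the identity successively with $x=1,2,\ldots,m_{h,k}$ and checking that at each stage the index $\sigma_{h-1,k}+x$ stays below $\sigma_{h,k}$, so that the already‑accumulated factors $s_{\sigma_{h-1,k}+1}\cdots s_{\sigma_{h-1,k}+x-1}$ sitting to the left of $w_{h,k}$ do not interfere; the accumulated product $s_{\sigma_{h-1,k}+1}s_{\sigma_{h-1,k}+2}\cdots s_{\sigma_{h-1,k}+m_{h,k}}w_{h,k}$ is by definition precisely $w^+_{h,k}$ from \eqref{w+ij}.

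I expect the main obstacle to be purely bookkeeping: verifying that each time one applies Lemma \ref{reflection}(3) or the commutation of $s_{\sigma_{h-1,k}+x}$ with $w_{2,k}\cdots w_{h-1,k}$, the index in play really does satisfy the required bounds $\widetilde m_{h,k}+1\le \sigma_{h-1,k}+x\le\sigma_{h-1,k}+m_{h,k}$ and the global bound $<\sigma_{h,1}$ is respected after transport. These are all elementary consequences of the definitions $\sigma_{i,j}=\widetilde\mu_{j-1}+m^\corner_{i,j}$ and $\widetilde m_{h,k}=\widetilde m_{h-1,k}+m_{h,k}$, so no genuinely new idea is needed beyond the lemmas already established; the statement is essentially a repackaging of Lemma \ref{reflection}(3) in matrix form, set up so that it feeds directly into the proof of Lemma \ref{d_M}'s increment formula and ultimately Theorem \ref{prod coset}.
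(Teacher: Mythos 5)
Your proof is correct and is exactly the derivation the paper intends: the hypothesis $l+x<\sigma_{h,1}$ translates to $\sum_{j=1}^{k-1}m_{h,j}+x<\la_h$ so Lemma \ref{reflection}(3) moves $s_{l+x}$ past the first $k-1$ columns to $s_{\sigma_{h-1,k}+x}$, which then commutes past $w_{2,k}\cdots w_{h-1,k}$ because $\sigma_{h-1,k}+x>\sigma_{h-2,k}+m_{h-1,k}$ (Remark \ref{dmw}(2)), and iterating for $x=1,\ldots,m_{h,k}$ assembles $w^+_{h,k}$. The paper states this as an immediate corollary without a written proof, and your bookkeeping fills in precisely the intended steps.
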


The next result is the key to establish the decomposition in Theorem \ref{prod coset} and the multiplication formulas in Theorem \ref{KMF}. 

\begin{proposition} \label{case1} Maintain the notation as given in \eqref{notn1} and Theorem \ref{prod coset}, and
let $a=\sum_{j=1}^{k-1}m_{h+1,j}$, and $b=\sum_{j=k+1}^nm_{h,j}$.
\begin{itemize}
\item[(1)] If $m_{h+1,k}\geq 1$ then, for 
$\lambda^+=\la^{[h^+]}=\lambda+\bse_h-\bse_{h+1}$ and $0\leq p< m_{h+1,k}$, 
\begin{equation*}\label{xx}
\aligned
s_{\widetilde{\lambda}_h+1}s_{\widetilde{\lambda}_h+2}\cdots s_{\widetilde{\lambda}_h+a+p}d_M&
=s_{{\widetilde\lambda}_h^+-1}s_{{\widetilde\lambda}_h^+-2}\cdots s_{{\widetilde\lambda}_h^+-b}d_{M^+_{h,k}}(s_{\widetilde{m}_{h,k}+1}\cdots s_{\widetilde{m}_{h,k}+p})\\
&=s_{\widetilde{\lambda}_h}s_{\widetilde{\lambda}_h-1}\cdots s_{\widetilde{\lambda}_h-b+1}d_{M^+_{h,k}}(s_{\widetilde{m}_{h,k}+1}\cdots s_{\widetilde{m}_{h,k}+p}).
\endaligned
\end{equation*}
\item[(2)] If $m_{h,k}\geq 1$ then, for 
$\lambda^-=\la^{[h^-]}=\lambda-\bse_h+\bse_{h+1}$ and $q=m_{h,k}-p$ with $0< p\leq m_{h,k}$ (so
$0\leq q<m_{h,k}$), 
\begin{equation*}
\begin{aligned}
s_{\widetilde{\lambda}_h-1}s_{\widetilde{\lambda}_h-2}\cdots s_{\widetilde{\lambda}_{h}-b-q}d_M
&=s_{\widetilde{\lambda}_h^-+1}s_{\widetilde{\lambda}_h^-+2}\cdots s_{\widetilde{\lambda}_h^-+a}d_{M^-_{h,k}}(s_{\widetilde{m}_{h,k}-1}s_{\widetilde{m}_{h,k}-2}\cdots s_{\widetilde{m}_{h,k}-q})\\
&=s_{\widetilde{\lambda}_h}s_{\widetilde{\lambda}_h+1}\cdots s_{\widetilde{\lambda}_h+a-1}d_{M^-_{h,k}}(s_{\widetilde{m}_{h,k}-1}s_{\widetilde{m}_{h,k}-2}\cdots s_{\widetilde{m}_{h,k}-q}).\\
\end{aligned}
%s_{\tilde{\lambda}_h+1}s_{\tilde{\lambda}_h+2}\cdots s_{\tilde{\lambda}_h+a+p}d_M
%=s_{\tilde{\lambda}^+_h-1}s_{\tilde{\lambda}^+_h-2}\cdots s_{\tilde{\lambda}^+_h-b}d_{M^+_{h,k}}(s_{\tilde{m}_{h,k}+1}\cdots s_{\tilde{m}_{h,k}+p}).
\end{equation*}
\end{itemize}
Here every product of the $s_i$'s
 is regarded as 1 if its ``length'' is 0.
 %whenever $a=0$ or $b=0$ or $p=0$ or $q=0$ on the right hand sides, or $a+p=0$ or $b+q=0$ on the left hand sides.
\end{proposition}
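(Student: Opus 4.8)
The plan is to prove Proposition \ref{case1} by reducing it, via repeated application of the braid-type commutation relations established in Lemma \ref{reflection} and Corollary \ref{mainw}, to a purely bookkeeping verification about reduced expressions for $d_M$ and $d_{M^\pm_{h,k}}$. Since part (2) is obtained from part (1) by the symmetry $M\leftrightarrow M^+_{h,k}$, $\lambda\leftrightarrow\lambda^+$ (note $(M^+_{h,k})^-_{h,k}=M$ and $\lambda^{[h^+][h^-]}=\lambda$), together with the index substitution $q=m_{h,k}-p$, I would prove (1) in full and then deduce (2) by this duality, checking that the roles of $a$ and $b$ swap correctly under the exchange (indeed the $a$ for $M^+_{h,k}$ at position $(h,k)$ equals $\sum_{j<k}m_{h+1,j}$ unchanged while the relevant $b$-type sum picks up the shifted entries).

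For part (1), I would first handle the case $p=0$, which is the heart of the matter: the claim is that $s_{\widetilde\lambda_h+1}\cdots s_{\widetilde\lambda_h+a}\,d_M = s_{\widetilde\lambda_h}s_{\widetilde\lambda_h-1}\cdots s_{\widetilde\lambda_h-b+1}\,d_{M^+_{h,k}}$. The left side is a sequence of $a=\sum_{j<k}m_{h+1,j}$ ascending transpositions applied on the left of $d_M$; using Lemma \ref{d_M}'s column-product form \eqref{d_Mm} and Lemma \ref{reflection}(2)--(3), I would push these transpositions rightward through the columns $1,2,\ldots,k-1$ of $d_M$, where at each column $j<k$ they interact with $w_{h+1,j}$ and, per the structure in \eqref{bulcir}, convert it into $w^+_{h+1,j}$; the index $\sigma_{h-1,1}$-based starting point $\widetilde\lambda_h = \sigma_{h,1}-b$ must be tracked carefully so that after passing column $k-1$ one is left with transpositions acting at column $k$, precisely producing the change $w_{h,k}\mapsto w^\bullet_{h,k}$ and $w_{h+1,k}\mapsto w^\circ_{h+1,k}$ described in Lemma \ref{d_M}, i.e.\ yielding $d_{M^+_{h,k}}$ up to the leftover factor $s_{\widetilde\lambda_h}\cdots s_{\widetilde\lambda_h-b+1}$ coming from the $b=\sum_{j>k}m_{h,j}$ entries of row $h$ strictly to the right of column $k$ (these are exactly the entries whose $w_{h,j}$ shift when row $h$ grows). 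The equality of the first two expressions in (1) (the $s_{\widetilde\lambda_h^+-1}\cdots$ form versus the $s_{\widetilde\lambda_h}\cdots$ form) is then just the identity $\widetilde\lambda_h^+-1 = \widetilde\lambda_h$, since $\widetilde\lambda_h^+ = \widetilde\lambda_h+1$.

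Once $p=0$ is settled, the general $0<p<m_{h+1,k}$ case follows by appending $p$ further ascending transpositions $s_{\widetilde\lambda_h+a+1}\cdots s_{\widetilde\lambda_h+a+p}$ on the left and tracking how they pass through the column-$k$ block: by Lemma \ref{reflection}(2) applied within column $k$ with the new entry $m_{h+1,k}$ already incorporated, these get transported to the right as $s_{\widetilde m_{h,k}+1}\cdots s_{\widetilde m_{h,k}+p}$, giving exactly the trailing factor in the statement. I expect the main obstacle to be the index arithmetic: verifying that all the hypotheses of Lemma \ref{reflection}(2)--(3) (the inequalities $\sigma_{h-1,j}+m_{h,j}<l<\sigma_{h,j}$, $l\ge\widetilde m_{h,j}+1$, and $\sum_{j<k}m_{h,j}+x<\lambda_h$) hold at every stage for the running index $l$, and that the cumulative shifts $\sum_{i>h}m_{i,j}$ telescope correctly across columns so the final leftover is $s_{\widetilde\lambda_h}\cdots s_{\widetilde\lambda_h-b+1}$ and not off by one. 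Careful use of the relations $\sigma_{i,j}=\widetilde\mu_{j-1}+m^\corner_{i,j}$, $\sigma_{i,1}=\widetilde\lambda_i$, and the length formula $\ell(d_{M^+_{h,k}})=\ell(d_M)+a-b$ from Lemma \ref{d_M} (which must match the net count of transpositions moved) will serve as the consistency check that pins down every boundary index.
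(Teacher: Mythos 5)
Your plan is essentially the paper's proof: reduce to $p=0$, use the column-product form of $d_M$ from Lemma \ref{d_M} together with Lemma \ref{reflection} and Corollary \ref{mainw} to absorb the $a$ transpositions into converting $w_{h+1,j}$ to $w^+_{h+1,j}$ for $j<k$, account for column $k$ via the $b$-transposition prefix, and transport the extra $p$ transpositions to the right as $s_{\widetilde{m}_{h,k}+1}\cdots s_{\widetilde{m}_{h,k}+p}$ (the paper gets part (2) by ``a similar argument'' rather than your duality, a cosmetic difference). One bookkeeping correction: the $a$ transpositions are entirely consumed by columns $1,\dots,k-1$ and never reach column $k$ — the modifications $w^\bullet_{h,k}$, $w^\circ_{h+1,k}$ are already built into $d_{M^+_{h,k}}$ by Lemma \ref{d_M}, and the actual content of the $p=0$ case is that pushing the $b$ transpositions $s_{\widetilde{\lambda}^+_h-1}\cdots s_{\widetilde{\lambda}^+_h-b}$ through the first $k-1$ columns of $d_{M^+_{h,k}}$ lands them as $s_{\sigma_{h,k}}\cdots s_{\sigma_{h-1,k}+m_{h,k}+1}$, which converts $w^\bullet_{h,k}w^\circ_{h+1,k}$ back into $w_{h,k}w_{h+1,k}$; this is the identity your sketch must verify explicitly at column $k$.
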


\begin{proof} We only prove (1), (2) follows from (1) with a similar argument.
 We first assume that $p=0$.  
%We consider the following two cases.
%\noindent {\bf  (a) The $p=0$ Case.}  
In this case, we want to prove 
\begin{equation}\label{p=0}
s_{\widetilde{\lambda}_h+1}s_{\widetilde{\lambda}_h+2}\cdots s_{\widetilde{\lambda}_h+a}d_M=
s_{\widetilde{\lambda}^+_h-1}s_{\widetilde{\lambda}^+_h-2}\cdots s_{\widetilde{\lambda}^+_h-b}d_{M^+_{h,k}}.\end{equation}

Since $a=m_{h+1,1}+\cdots+m_{h+1,k-1}$,
repeatedly applying Corollary \ref{mainw} (with $h$ replaced by $h+1$, noting $m_{h+1,k}>0$) 
%whenever $m_{h+1,j}>0$, for $j=k-1,k-2,\ldots,1$,  
yields
\begin{equation*}\label{sdm}
\begin{aligned}
s_{\widetilde{\lambda}_h+1}s_{\widetilde{\lambda}_h+2}\cdots s_{\widetilde{\lambda}_h+a}d_M={\left(\begin{array}{ccccccc}
w_{2,1}&\cdots&w_{2,k-1}&w_{2,k}&\cdots&w_{2,n-1}\\
\vdots&\vdots&\cdots&\vdots&\cdots&\vdots\\
w_{h,1}&\cdots&w_{h,k-1}&w_{h,k}&\cdots&w_{h,n-1}\\
w^+_{h+1,1}&\cdots&w^+_{h+1,k-1}&w_{h+1,k}&\cdots&w_{h+1,n-1}\\
w_{h+2,1}&\cdots&w_{h+2,k-1}&w_{h+2,k}&\cdots&w_{h+2,n-1}\\
\vdots&\vdots&\cdots&\vdots&\cdots&\vdots\\
w_{n,1}&\cdots&w_{n,k-1}&w_{n,k}&\cdots&w_{n,n-1}\\
\end{array}\right).}
\end{aligned}
\end{equation*}
(Note that, if $k=1$, then $a=0$ and so LHS of \eqref{p=0} $=d_M$. Note also that 
$w_{h+1,j}^+=1$ if $m_{h+1,j}=0$.)
By comparing this with the ``matrix'' of $d_{M^+_{h,k}}$, we now show that multiplying $d_{M^+_{h,k}}$ by $s_{\widetilde{\lambda}^+_h-1}s_{\widetilde{\lambda}^+_h-2}\cdots s_{\widetilde{\lambda}^+_h-b}$ on the left will turn the product $w^\bullet_{h,k}w^\circ_{h+1,k}$ into $w_{h,k}w_{h+1,k}$.

 If $b=0$, then $\sigma_{h,k}=\sigma_{h-1,k}+m_{h,k}$ and so $w^\bullet_{h,k}w^\circ_{h+1,k}=w_{h,k}w_{h+1,k}$ (cf. Lemma \ref{d_M}). This proves \eqref{p=0} in this case. Assume now $b>0$.
Observe that, for $\la^+=\ro(M^+_{h,k})$, $\widetilde{\lambda}^+_h-\sum_{j>k}m_{h,j}=\widetilde{\lambda}_{h-1}+\sum_{j=1}^km_{h,j}+1$. Let $l=\widetilde{\lambda}_{h-1}+\sum_{j=1}^{k-1}m_{h,j}$ and $1\leq x\leq m_{h,k}$. Then $l+x<l+x+m_{h+1,k}\leq\la_{h+1}$. By Lemma \ref{reflection}(3) (cf. \eqref{*}),
 \begin{equation}\label{k-1column}
 s_{l+x}\Pi^+_{k-1}=\Pi^+_{k-1}s_{\sigma_{h-1,k}+x},
 \end{equation}
where $\Pi^+_{k-1}$ is the product of the first $k-1$ columns of $d_{M^+_{h,k}}$.
By \eqref{d_Mm} for ${M^+_{h,k}}$ and noting \eqref{bulcir},
\begin{equation}\label{***}
\begin{aligned}
s_{\widetilde{\lambda}^+_h-1}s_{\widetilde{\lambda}^+_h-2}\cdots &s_{\widetilde{\lambda}^+_h-\sum_{j>k}m_{h,j}}d_{M^+_{h,k}}\\
=\;&\Pi^+_{k-1}\cdot s_{\sigma_{h,k}}s_{\sigma_{h,k}-1}\cdots s_{\sigma_{h-1,k}+m_{h,k}+1}\\
&(w_{2,k}\cdots w_{h-1,k}w^\bullet_{h,k}w^\circ_{h+1,k} w_{h+2,k}\cdots w_{n,k})\\
&\cdots\cdots\\
&(w_{2,n-1}\cdots w_{h,n-1}w_{h+1,n-1} w_{h+2,n-1}\cdots w_{n,n-1}).\\
\end{aligned}
\end{equation}

Since the smallest number permuted by $s_{\sigma_{h,k}}s_{\sigma_{h,k}-1}\cdots s_{\sigma_{h-1,k}+m_{h,k}+1}$ is 
$\sigma_{h-1,k}+m_{h,k}+1$, while the largest number permuted by $w_{2,1}\cdots w_{h-1,k}w_{h,k}$ is $\sigma_{h-1,k}+m_{h,k}$, it follows that $s_{\sigma_{h,k}}s_{\sigma_{h,k}-1}\cdots s_{\sigma_{h-1,k}+m_{h,k}+1}$ commutes with
$w_{2,k}\cdots w_{h-1,k}$ and $w_{h,k}$. Thus, 
$$\begin{aligned}
&\quad\;s_{\sigma_{h,k}}s_{\sigma_{h,k}-1}\cdots s_{\sigma_{h-1,k}+m_{h,k}+1}w^\bullet_{h,k}w^\circ_{h+1,k}\\
&=w_{h,k}(s_{\sigma_{h,k}}\cdots s_{\sigma_{h-1,k}+m_{h,k}+1})
s_{\sigma_{h-1,k}+m_{h,k}}s_{\sigma_{h-1,k}+m_{h,k}-1}\cdots s_{\widetilde{m}_{h,k}+1}w^\circ_{h+1,k}\\
&=w_{h,k}(s_{\sigma_{h,k}}s_{\sigma_{h,k}-1}\cdots s_{\widetilde{m}_{h,k}+1})w^\circ_{h+1,k}\\
&=w_{h,k}w_{h+1,k}.
\end{aligned}
$$
Hence, $s_{\widetilde{\lambda}^+_h-1}s_{\widetilde{\lambda}^+_h-2}\cdots s_{\widetilde{\lambda}^+_h-\sum_{j>k}m_{h,j}}d_{M^+_{h,k}}
=\;$LHS, proving the $p=0$ case.

Assume now $p>0$. Then one can easily prove by Corollary \ref{mainw} that
$$s_{l+1}\cdots s_{l+p}d_M=d_Ms_{\widetilde{m}_{h,k}+1}s_{\widetilde{m}_{h,k}+2}\cdots s_{\widetilde{m}_{h,k}+p}.$$
Now the required formula follows from \eqref{p=0}.
\end{proof}

\vspace{.3cm}
\noindent
{\it Proof of Theoren \ref{prod coset}.}
Set $D^+_h=\diag(\lambda-\bse_{h+1})+E_{h,h+1}$. Then $\ro(D^+_h)=\lambda^{[h^+]}$, $\co(D^+_h)=\lambda$, and 
$$\nu':=\nu_{D^+_h}=(\lambda_1,\lambda_2,\cdots,\lambda_h,1,\lambda_{h+1}-1,\lambda_{h+2},\cdots,\lambda_n).$$ 
Note that in this case $d_{D^+_h}=1$. Observe that
 \begin{equation}\label{nu'la}
 \mathcal{D}_{\nu'}\cap\fS_\lambda=\{1,s_{\widetilde{\lambda}_h+1},s_{\widetilde{\lambda}_h+1}s_{\widetilde{\lambda}_h+2},\cdots,s_{\widetilde{\lambda}_h+1}s_{\widetilde{\lambda}_h+2}\cdots s_{\widetilde{\lambda}_h+\lambda_{h+1}-1}\}.
 \end{equation}
Putting $d_i=s_{\widetilde{\lambda}_h+1}s_{\widetilde{\lambda}_h+2}\cdots s_{\widetilde{\lambda}_h+i}$ for $0\leq i\leq \lambda_{h+1}-1$, the left hand side becomes $\bigcup_i\fS_{\lambda^{[h^+]}} d_id_M\fS_\mu$. Since
$\lambda_{h+1}=\sum_{k; m_{h+1,k}\geq1}m_{h+1,k}$,
the first decomposition follows from Proposition \ref{case1}(1). The second decomposition can be proved similarly.
\hfill \qed

\section{Regular representation of the $q$-Schur superalgebra}
We now use Proposition \ref{case1} to derive certain multiplication formulas in $\sS(m|n,r)$ and the matrix representation of the regular representation. For any integers $0\leq t\leq s$, define Gaussian polynomials in $\sZ=\mathbb Z[\up,\up^{-1}]$ by
$$\left[\!\!\left[s\atop t\right]\!\!\right]=\left[\!\!\left[s\atop t\right]\!\!\right]_\bsq=\frac{[\![s]\!]^!}{[\![t]\!]^![\![s-t]\!]^!},$$
where $[\![r]\!]^{!}:=[\![1]\!][\![2]\!]\cdots[\![r]\!]$ with $[\![i]\!]=1+\bsq+\cdots+\bsq^{i-1}$ ($\bsq=\up^2$). Define $[r]^!$ similarly with $[i]=\frac{\up^i-\up^{-i}}{\up-\up^{-1}}$.

For $\lambda\in\Lambda(m|n,r)$, denote $\mathcal{P}_{W_\lambda}$ to be the {\it super} Poincar\'e polynomial
\begin{equation}\label{superP}
\mathcal{P}_{W_\lambda}=\sum_{w_0\in W_{\la^{(0)}},w_1\in W_{\la^{(1)}}}(\bsq)^{\ell(w_0)}(\bsq^{-1})^{\ell(w_1)}.
\end{equation}

For $1\leq h\leq m+n$, define $\dq_h,\ddq_h,\up_h$ by
$$\begin{cases}
\dq_h=1, &\ddq_h=\bsq, \quad\;\;\up_h=\up,      \quad          \text{ if }1\leq h\leq m;\\
\dq_h=-\bsq^{-1},&\ddq_h=-1, \quad \up_h=\up^{-1},\text{ if }m<h\leq m+n,
\end{cases}
$$
and let $\bsq_h=\up_h^2$. Recall the basis $\{\phi_A\}_{A\in M(m|n,r)}$ given in Lemma \ref{DR5.8}.

\begin{theorem}\label{KMF}
For any $A=(a_{i,j})\in M(m|n,r)$ and $1\leq h< m+n$, let $D_h^+,D_h^-$ be the matrices defined by the conditions that $D_h^+-E_{h,h+1}, D_h^--E_{h+1,h}$ are diagonal and $\co(D_h^+)=\co(D_h^-)=\ro(A)$, and assume $D_h^+, D_h^-\in M(m|n,r)$. Then the following multiplication formulas hold in $\sS(m|n,r)$:
$$\aligned
(1)\quad&\phi_{D^+_h}\phi_A=\sum_{k\in[1,m+n]\atop a_{h+1,k}\geq 1}\dq_{h+1}^{\sum_{j<k}a_{h+1,j}}\ddq_h^{\sum_{j>k}a_{h,j}}[\![a_{h,k}+1]\!]_{\bsq_h}\phi_{A^+_{h,k}};\\
(2)\quad&\phi_{D^-_h}\phi_A=\sum_{k\in[1,m+n]\atop a_{h,k}\geq 1}\dq_h^{\sum_{j>k}a_{h,j}}\ddq_{h+1}^{\sum_{j<k}a_{h+1,j}}[\![a_{h+1,k}+1]\!]_{\bsq_{h+1}}\phi_{A^-_{h,k}}.\endaligned
$$
{\rm(Here $[1,m+n]=\{1,2,\ldots,m+n\}$.)}
\end{theorem}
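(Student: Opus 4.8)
The strategy is to translate the combinatorial coset decompositions of Theorem~\ref{prod coset} into identities of $\sH_R$-module homomorphisms, keeping careful track of the $q$-powers and of the ``odd'' $(-q)^{-\ell}$ twists built into the elements $T_{\fS_\la d\fS_\mu}$ and into the $y$-factors of $\xy_\la$. Since $\phi_{D_h^\pm}$ and $\phi_A$ are composable (their row/column types match by the hypothesis $\co(D_h^\pm)=\ro(A)$), the composite $\phi_{D_h^+}\phi_A$ is again a basis-like element, and the whole problem is to expand it in the basis $\{\phi_B\}$. By the defining formula $\phi_{D_h^+}\phi_A(\xy_\alpha h)=\delta_{\co(A),\alpha}\,\phi_{D_h^+}(T_{\fS_{\ro(A)}d_A\fS_{\co(A)}}h)$, it suffices to compute $T_{\fS_{\lambda^{[h^+]}}1\fS_\lambda}\cdot T_{\fS_\lambda d_A\fS_\mu}$ inside $\fT_R(m|n,r)$ (with $\lambda=\ro(A)$, $\mu=\co(A)$), and then match the result against $\sum_k c_k\,T_{\fS_{\lambda^{[h^+]}}d_{A^+_{h,k}}\fS_\mu}$.

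\textbf{Key steps.} First, I would recall that $T_{\fS_{\lambda^{[h^+]}}1\fS_\lambda}=\xy_{\lambda^{[h^+]}}T_{\sD_{\nu'}\cap W_\lambda}$ where $\nu'=\nu_{D_h^+}$ as in the proof of Theorem~\ref{prod coset}, so $T_{\sD_{\nu'}\cap W_\lambda}=\sum_{i=0}^{\lambda_{h+1}-1}(\dot{\phantom{q}})\,T_{d_i}$ with $d_i=s_{\widetilde\lambda_h+1}\cdots s_{\widetilde\lambda_h+i}$ and the appropriate sign/power $(-q)^{-\ell(d_i)}$ or $q^{0}$ according to whether the transpositions $s_{\widetilde\lambda_h+1},\ldots$ lie in the even or odd part of $W_{\lambda^{[h^+]}}$ — this is exactly where the cases $1\le h\le m$ versus $m<h$ split, producing the parameters $\dq_h,\ddq_h,\bsq_h,\up_h$. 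Second, I would multiply $\xy_\lambda T_{d_i}\cdot T_{d_A}$; since each $d_i\in\sD_\lambda$ and $d_id_A$ need not be reduced, I use Proposition~\ref{case1}(1) to rewrite $T_{d_i}T_{d_A}$ (more precisely $T_{s_{\widetilde\lambda_h+1}\cdots s_{\widetilde\lambda_h+a+p}}T_{d_M}$) as $T_{s_{\widetilde\lambda_h}\cdots s_{\widetilde\lambda_h-b+1}}T_{d_{M^+_{h,k}}}T_{s_{\widetilde m_{h,k}+1}\cdots s_{\widetilde m_{h,k}+p}}$, with the length bookkeeping from the last clause of Lemma~\ref{d_M} telling us which products are reduced and which incur a $(q-1)$- or $q$-correction. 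Third, I group the contributions by $k$: for fixed $k$ the values $p=0,1,\ldots,a_{h+1,k}-1$ (where $a=\sum_{j<k}a_{h+1,j}$) collapse, after absorbing the trailing $T_{s_{\widetilde m_{h,k}+1}\cdots}$ into $\xy_\mu$ and the leading $T_{s_{\widetilde\lambda_h}\cdots}$ into $\xy_{\lambda^{[h^+]}}$ via the standard $x$-absorption identities $x_\nu T_s=q\,x_\nu$ (and the $y$-analogue $y_\nu T_s=-y_\nu$), into a single geometric-type sum; the even/odd case of index $h$ turns this sum into $[\![a_{h,k}+1]\!]_{\bsq_h}$ and the leftover prefactors into $\dq_{h+1}^{\sum_{j<k}a_{h+1,j}}\ddq_h^{\sum_{j>k}a_{h,j}}$. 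Finally, recognizing $\xy_{\lambda^{[h^+]}}T_{d_{M^+_{h,k}}}(\text{stuff in }\sH_{\co})$ as $T_{\fS_{\lambda^{[h^+]}}d_{A^+_{h,k}}\fS_\mu}$ up to the known normalization yields $\phi_{A^+_{h,k}}$, and (2) follows by the symmetry $\sS_R(m|n,r)\cong\sS_R(n|m,r)$ of Lemma~\ref{DR5.8} together with Proposition~\ref{case1}(2).

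\textbf{Main obstacle.} The genuinely delicate part is the precise determination of the $q$-coefficients $\dq_{h+1}^{\sum_{j<k}a_{h+1,j}}$, $\ddq_h^{\sum_{j>k}a_{h,j}}$ and $[\![a_{h,k}+1]\!]_{\bsq_h}$, i.e., correctly accounting for every sign and power of $q$ that arises when (i) a product $T_{d_i}T_{d_A}$ fails to be reduced so that the quadratic relation $T_s^2=(q-1)T_s+q$ intervenes, and (ii) one commutes the reflections of $d_i$ past $d_A$ using Proposition~\ref{case1}, since each such reflection may sit in the even part (contributing via $x$, hence a factor $q$) or the odd part (contributing via $y$, hence a factor $-1$, and with $(-q)^{-1}$ normalizations in $T_D$). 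Organizing this so the even and odd cases are handled uniformly by the parameters $\bsq_h=\up_h^{\pm2}$ — and checking the boundary terms $p=0$ and $p=a_{h+1,k}-1$, and the degenerate cases $a=0$ (i.e.\ $k=1$) or $b=0$ — is where essentially all the work lies; the underlying set-theoretic decomposition is already furnished cleanly by Theorem~\ref{prod coset}.
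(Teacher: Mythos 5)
Your plan is correct and follows essentially the same route as the paper: apply $\phi_{D_h^+}\phi_A$ to $\xy_\mu$, expand $T_{\fS_{\lambda^{[h^+]}}1\fS_\lambda}$ via the coset representatives $d_i=s_{\widetilde\lambda_h+1}\cdots s_{\widetilde\lambda_h+i}$, rewrite each $T_{d_i}T_{d_A}$ with Proposition~\ref{case1}(1), absorb the leading and trailing reflections into $\xy_{\lambda^{[h^+]}}$ and $\xy_\mu$ to produce the factors $\ddq_h^{\sum_{j>k}a_{h,j}}$ and the geometric sum, and then renormalize $\xy_\xi T_{d^+}\xy_\mu$ against $T_{W_\xi d^+W_\mu}$ (in the paper this is the Poincar\'e-polynomial ratio $\mathcal P_{W_{\nu''}}/\mathcal P_{W_\nu}$, which is what converts the raw sum $[\![a_{h+1,k}]\!]_{\dq_{h+1}\ddq_k}$ into $[\![a_{h,k}+1]\!]_{\bsq_h}$ after the even/odd case check — exactly the "known normalization" you defer). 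One small correction: part (2) is not obtained from the $m\leftrightarrow n$ isomorphism of Lemma~\ref{DR5.8} (which swaps the superalgebras, not $D_h^+$ with $D_h^-$), but by running the same direct computation with Proposition~\ref{case1}(2), which you also cite, so the plan stands.
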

\begin{proof} We only prove (1). The proof of (2) is symmetric.

Let $\lambda=\ro(A)$, $\mu=\co(A)$, $d=d_A$ and $W_\nu=\fS^d_\lambda\cap \fS_\mu=W_{\nu^{(0)}}\times W_{\nu^{(1)}}$ (cf. \eqref{notn1}), where $W_{\nu^{(i)}}=W^d_{\lambda^{(i)}}\cap W_{\mu^{{(i)}}}$ for $i=0,1$. Then $\lambda=\co(D^+_h)$, $\la^{[h^+]}=\ro(D^+_h)=\lambda+\bse_h-\bse_{h+1}$, and $\jmath(\la^{[h^+]},1,\lambda)=D^+_h$.

Putting $W_{\nu'(h)}=W_{\la^{[h^+]}}\cap W_\la$, we see from \eqref{nu'la},
$$\mathcal{D}_{\nu'(h)}\cap W_\la=\{1,s_{\widetilde{\lambda}_h+1},s_{\widetilde{\lambda}_h+1}s_{\widetilde{\lambda}_h+2},
\cdots,s_{\widetilde{\lambda}_h+1}\cdots s_{\widetilde{\lambda}_h+\lambda_{h+1}-1}\}.$$
Since $\mathcal{D}_{\nu'(h)}\cap W_\la\subseteq W_{\la^{(1)}}$ whenever $h\geq m$, the element
$T_{\mathcal{D}_{\nu'(h)}\cap W_\la}$ used in \eqref{double coset} can be written as
$T_{\mathcal{D}_{\nu'(h)}\cap W_\la}=\sum_{w\in \mathcal{D}_{\nu'(h)}\cap W_\la}(\dq_{h+1})^{\ell(w)}T_w.$

By definition, to compute $\phi_{D^+_h}\phi_A$, it suffices to write $\phi_{D^+_h}\phi_A(\xy_{\mu})$ as a linear combination of some $T_{W_\xi d'W_\mu}$, where $\xi=\la^{[h^+]}$. We compute this within $\sS_{\mathbb Q(\up)}(m|n,r)$:
\begin{equation}
\begin{aligned}
\phi_{D^+_h}\phi_A(\xy_{\mu})&=\phi^1_{\xi,\lambda}\phi^d_{\lambda,\mu}(\xy_\mu)=\phi^1_{\xi,\lambda}(T_{W_\lambda d W_{\mu}})\\
&=\phi^1_{\xi,\lambda}(\xy_{\lambda} T_d T_{\mathcal{D_\nu}\cap W_\mu})\mbox{ (by \eqref{double coset})}\\
&=T_{W_\xi W_\lambda}T_d T_{\mathcal{D_\nu}\cap W_\mu}=(\mathcal{P}_{W_\nu})^{-1}T_{W_\xi W_\lambda}T_d \xy_\mu\\
&=(\mathcal{P}_{W_\nu})^{-1}\xy_\xi T_{\mathcal{D}_{\nu'(h)}\cap W_\lambda}T_d\xy_\mu\\
&=(\mathcal{P}_{W_\nu})^{-1}\sum_{w\in \mathcal{D}_{\nu'(h)}\cap W_\lambda}\xy_\xi ({\dq_{h+1}}^{\ell(w)}T_w) T_d \xy_\mu.\\
\end{aligned}
\end{equation}

Note that $d=d_A\in \mathcal{D}_{\lambda\mu}$. If $a_{h+1,k}>0$ and 
$w_p:=s_{\widetilde{\lambda}_h+1}s_{\widetilde{\lambda}_h+2}\cdots s_{\widetilde{\lambda}_h+\sum_{j=1}^{k-1}a_{h+1,j}+p}$ for some $0\leq p<a_{h+1,k}$, then by Proposition \ref{case1}(1), we have
$$w_pd=s_{{\widetilde\lambda}_h}s_{{\widetilde\lambda}_h-1}\cdots s_{{\widetilde\lambda}_h-\sum_{j=k+1}^{m+n}a_{h,j}+1}d^+(s_{\widetilde{a}_{h,k}+1}\cdots s_{\widetilde{a}_{h,k}+p}),$$
where $d^+=d_{A^+_{h,k}}$. Clearly, $\sum_{j<k}a_{h+1,j}=\ell(w_p)-p$.
%By the length formula in Lemma \ref{d_M}, 
%\begin{equation}\label{length2}
%\sum_{j>k}a_{h,j}+\ell(d^+)-\ell(d)=\sum_{j<k}a_{h+1,j}=\ell(w)-p.
%\end{equation} 
If we put $Q_{h+1,k}=\dq_{h+1}^{\sum_{j<k}a_{h+1,j}}$, then
%(\dq_{h+1})^{\sum_{j>k}a_{h,j}+\ell(d^+)-\ell(d)}$ gives
$$\aligned
\sum_{p=0}^{a_{h+1,k}-1}&{\dq_{h+1}}^{\ell(w_p)}T_{w_p} T_d=Q_{h+1,k}
T_{\widetilde{\lambda}_h}T_{\widetilde{\lambda}_h-1}\cdots T_{\widetilde{\lambda}_h-\sum_{j>k}a_{h,j}+1}T_{d^+}\\
&\cdot(1+\dq_{h+1}T_{\widetilde{a}_{h,k}+1}+\cdots+\dq_{h+1}^{a_{h+1,k}-1}T_{\widetilde{a}_{h,k}+1}\cdots T_{\widetilde{a}_{h,k}+a_{h+1,k}-1}).
\endaligned
$$
Thus,
\begin{equation*}
\begin{aligned}
&\sum_{w\in \mathcal{D}_{\nu'}\cap W_\lambda}\xy_\xi (\dq_{h+1}^{\ell(w)}T_w T_d) \xy_\mu\\
=&\sum_{k\in[1,m+n]\atop a_{h+1,k}\geq 1} Q_{h+1,k} \xy_{\xi}
T_{\widetilde{\lambda}_h}T_{\widetilde{\lambda}_h-1}\cdots T_{\widetilde{\lambda}_h-\sum_{j>k}a_{h,j}+1}T_{d^+}\\
&\cdot(1+(\dq_{h+1})T_{\widetilde{a}_{h,k}+1}+\cdots+(\dq_{h+1})^{a_{h+1,k-1}}T_{\widetilde{a}_{h,k}+1}\cdots T_{\widetilde{a}_{h,k}+a_{h+1,k}-1})\xy_\mu.
\end{aligned}
\end{equation*}
Since 
$$
\begin{aligned}
&(1+(\dq_{h+1})T_{\widetilde{a}_{h,k}+1}+\cdots+(\dq_{h+1})^{a_{h+1,k-1}}T_{\widetilde{a}_{h,k}+1}\cdots T_{\widetilde{a}_{h,k}+a_{h+1,k}-1})\xy_\mu\\
&=(1+\dq_{h+1}\ddq_k+\cdots+(\dq_{h+1}\ddq_k)^{a_{h+1,k}-1})\xy_\mu\\
&=[\![a_{h+1,k}]\!]_{\dq_{h+1}\ddq_k}\xy_\mu
\end{aligned}
$$
and 
$$\xy_\xi T_{\widetilde{\lambda}_h}T_{\widetilde{\lambda}_h-1}\cdots T_{\widetilde{\lambda}_h-\sum_{j>k}a_{h,j}+1}
=\ddq_h^{\sum_{j>k}a_{h,j}}\xy_\xi,
$$
it follows that
\begin{equation*}
\begin{aligned}
\phi_{D^+_h}\phi_A(\xy_\mu)&=\mathcal{P}_{W_\nu}^{-1}\sum_{a_{h+1,k}\geq1}Q_{h+1,k}\ddq_h^{\sum_{j>k}a_{h,j}}
[\![a_{h+1,k}]\!]_{\dq_{h+1}\ddq_k}\xy_\xi T_{d^+}\xy_\mu\\
&=\sum_{a_{h+1,k}\geq1}\frac{\mathcal{P}_{W_{\nu''}}}{\mathcal{P}_{W_\nu}}Q_{h+1,k}
\ddq_h^{\sum_{j>k}a_{h,j}}[\![a_{h+1,k}]\!]_{\dq_{h+1}\ddq_k}T_{W_\xi d^+W_\mu}\\
&=\sum_{a_{h+1,k}\geq1}\frac{\mathcal{P}_{W_{\nu''}}}{\mathcal{P}_{W_\nu}}Q_{h+1,k}
\ddq_h^{\sum_{j>k}a_{h,j}}[\![a_{h+1,k}]\!]_{\dq_{h+1}\ddq_k}
\phi_{A^+_{h,k}}(\xy_{\mu}),
\end{aligned}
\end{equation*}
where $\nu''=\nu_M$ with $M=A^+_{h,k}$ or $W_{\nu''}=W_\xi^{d^+}\cap W_\mu$.
Hence, noting
$$\frac{\mathcal{P}_{W_{\nu''}}}{\mathcal{P}_{W_\nu}}=\frac{[\![a_{h,k}+1]\!]_{\bsq_k}^![\![a_{h+1,k}-1]\!]_{\bsq_k}^!}
{[\![a_{h,k}]\!]_{\bsq_k}^![\![a_{h+1,k}]\!]_{\bsq_k}^!}=\frac{[\![a_{h,k}+1]\!]_{\bsq_k}}{[\![a_{h+1,k}]\!]_{{\bsq_k}}},$$
 we obtain
\begin{equation}
\begin{aligned}
\phi_{D^+_h}\phi_A=\sum_{k\atop a_{h+1,k}\geq1}\dq_{h+1}^{\sum_{j<k}a_{h+1,j}}\ddq_h^{\sum_{j>k}a_{h,j}} \frac{[\![a_{h,k}+1]\!]_{\bsq_k} [\![a_{h+1,k}]\!]_{\dq_{h+1}\ddq_k}}{[\![a_{h+1,k}]\!]_{{\bsq_k}}} \phi_{A^+_{h,k}}.
\end{aligned}
\end{equation}
%where $Q=(\dq_{h+1})^{\sum_{j>k}a_{h,j}+\ell(d_{A^+_{h,k}})-\ell(d_A)}.$ 
%Since $A^+_{h,k}=A+E_{h,k}-E_{h+1,k}$,
%one checks easily, using \eqref{length}, that $\sum_{j>k}a_{h,j}+\ell(d_{A^+_{h,k}})-\ell(d_A)=\sum_{j<k}a_{h+1,j}.$
It remains to prove that
\begin{equation}\label{str const}
\frac{[\![a_{h,k}+1]\!]_{\bsq_k} [\![a_{h+1,k}]\!]_{\dq_{h+1}\ddq_k}}{[\![a_{h+1,k}]\!]_{{\bsq_k}}}=[\![a_{h,k}+1]\!]_{\bsq_h}.
\end{equation}
This can be seen in cases. For example,
if $h< m$ and $k\leq m$ (resp.,  $h>m$ and $k>m$), then $\dq_{h+1}=1$, $\ddq_k=\bsq$ (resp., $\dq_{h+1}=-\bsq^{-1}$, $\ddq_k=-1$), and so $\bsq_k=\bsq_h$ (resp., $\dq_{h+1}\ddq_k=\bsq_h$). 
Hence,
$$\frac{[\![a_{h,k}+1]\!]_{\bsq_k} [\![a_{h+1,k}]\!]_{\dq_{h+1}\ddq_k}}{[\![a_{h+1,k}]\!]_{{\bsq_k}}}=[\![a_{h,k}+1]\!]_{\bsq_h}.$$
When $h\leq m$ and $k>m$, or  $h>m$ and $k\leq m$, we must have $a_{h,k}+1=a_{h+1,k}=1$. Thus,
$ [\![a_{h,k}+1]\!]_{\bsq_k}= [\![a_{h,k}+1]\!]_{\dq_{h+1}\ddq_k}=[\![a_{h+1,k}]\!]_{{\bsq_k}}=1= [\![a_{h,k}+1]\!]_{\bsq_h}.$ Finally,
when $h=m$ and $k\leq m$, we have $\bsq_h=\bsq_k$ and  $\dq_{h+1}\ddq_k=-\bsq^{-1}\bsq=-1$. But
$a_{h+1,k}=a_{m+1,k}=1$, forcing
$ [\![a_{h+1,k}]\!]_{\dq_{h+1}\ddq_k}=[\![a_{h+1,k}]\!]_{{\bsq_k}}=1$. Hence,
$$\frac{[\![a_{h,k}+1]\!]_{\bsq_k} [\![a_{h+1,k}]\!]_{\dq_{h+1}\ddq_k}}{[\![a_{h+1,k}]\!]_{{\bsq_k}}}=[\![a_{h,k}+1]\!]_{\bsq_h},$$
proving \eqref{str const} and, hence, formula (1).
\end{proof}
 
If $n=0$, then $\sS(m|0,r)$ is the usual $\bsq$-Schur algebra which is defined in \cite{BLM} as a convolution algebra of the $m$-step flags of an $r$-dimensional space. Similar multiplication formulas are obtained in loc. cit. by counting intersections of certain orbits. Observe that, for $h<m$, 
 $$\dq_{h+1}^{\sum_{j<k}a_{h+1,j}}\ddq_h^{\sum_{j>k}a_{h,j}}=\bsq^{\sum_{j>k}a_{h,j}},\qquad\dq_h^{\sum_{j>k}a_{h,j}}\ddq_{h+1}^{\sum_{j<k}a_{h+1,j}}=\bsq^{\sum_{j<k}a_{h+1,j}}.$$
  
\begin{corollary}\label{KMFcor}
The multiplication formulas in Theorem \ref{KMF} for $\mathcal S(m|0,r)$ coincide with the ones in \cite[Lemma 3.4]{BLM}.
\end{corollary}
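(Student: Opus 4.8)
\textbf{Proof proposal for Corollary \ref{KMFcor}.}

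The plan is to take the multiplication formulas of Theorem \ref{KMF} in the special case $n=0$, so that $m+n=m$ and every index $h,k$ lies in $[1,m]$, and then simply unravel what the parity-decorated scalars $\dq_h,\ddq_h,\up_h,\bsq_h$ become. When $n=0$ every $h$ satisfies $1\leq h\leq m$, hence the second branch of the definition of $\dq_h,\ddq_h,\up_h$ never occurs: we always have $\dq_h=1$, $\ddq_h=\bsq$, $\up_h=\up$, and therefore $\bsq_h=\up^2=\bsq$ for all $h$. First I would substitute these values into formula (1) of Theorem \ref{KMF}. The prefactor $\dq_{h+1}^{\sum_{j<k}a_{h+1,j}}$ collapses to $1$ since $\dq_{h+1}=1$, the factor $\ddq_h^{\sum_{j>k}a_{h,j}}$ becomes $\bsq^{\sum_{j>k}a_{h,j}}$, and the Gaussian factor $[\![a_{h,k}+1]\!]_{\bsq_h}$ becomes $[\![a_{h,k}+1]\!]_{\bsq}$ — this is exactly the computation already flagged in the sentence preceding the corollary, where it is observed that $\dq_{h+1}^{\sum_{j<k}a_{h+1,j}}\ddq_h^{\sum_{j>k}a_{h,j}}=\bsq^{\sum_{j>k}a_{h,j}}$ for $h<m$. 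Thus formula (1) reads
$$\phi_{D^+_h}\phi_A=\sum_{k:\,a_{h+1,k}\geq1}\bsq^{\sum_{j>k}a_{h,j}}[\![a_{h,k}+1]\!]_{\bsq}\,\phi_{A^+_{h,k}},$$
and symmetrically formula (2) reads
$$\phi_{D^-_h}\phi_A=\sum_{k:\,a_{h,k}\geq1}\bsq^{\sum_{j<k}a_{h+1,j}}[\![a_{h+1,k}+1]\!]_{\bsq}\,\phi_{A^-_{h,k}},$$
again using the displayed identity $\dq_h^{\sum_{j>k}a_{h,j}}\ddq_{h+1}^{\sum_{j<k}a_{h+1,j}}=\bsq^{\sum_{j<k}a_{h+1,j}}$.

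Next I would match these against \cite[Lemma 3.4]{BLM}. Here one must be careful about dictionary issues: BLM work with a geometrically defined convolution algebra of $m$-step flags, their basis is indexed by the same set of $m\times m$ matrices over $\mathbb N$ with entries summing to $r$, and their generators $E_h^{\pm}$ (matrices differing from a diagonal matrix by $\pm E_{h,h+1}$ resp. $\pm E_{h+1,h}$ with prescribed column sums) correspond exactly to our $D_h^{\pm}$ under the identification $\jmath$ of \eqref{jmath}. The only genuine work is to confirm that the normalisation conventions agree: BLM's $v$ is our $\up$, their $[\,\cdot\,]$-quantum integers are our $[\![\,\cdot\,]\!]_{\bsq}$ up to a power of $\up$ depending on which version of the $q$-Schur algebra (the ``BLM version'' over $\mathbb Z[v,v^{-1}]$ versus the ``Dipper--James version'') one uses, and the exponents $\sum_{j>k}a_{h,j}$, $\sum_{j<k}a_{h+1,j}$ in the scalars match theirs verbatim. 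So the proof is: specialise, simplify the scalars using the already-recorded identities, and observe the resulting formulas are literally \cite[Lemma 3.4]{BLM}.

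The main obstacle — really the only one — is bookkeeping of conventions rather than mathematics: one has to pin down precisely which form of the $q$-Schur algebra $\sS(m|0,r)$ coincides with (the paper defines it via $\End_{\sH_R}$ of a sum of $x_\la\sH_R$'s, which is the Dipper--James/Hecke-endomorphism form), and then cite the known isomorphism between that and the BLM convolution algebra under which $\phi_A\leftrightarrow$ the BLM basis element $[A]$ (possibly up to the standard rescaling $\phi_A\mapsto\up^{?}[A]$ that does not affect the shape of the structure constants, only absorbs into the $\bsq$-powers consistently on both sides). Once that identification is in hand, the two displayed formulas above are visibly \cite[Lemma 3.4]{BLM}, completing the proof. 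I would keep the written proof to a few lines, quoting the preceding paragraph's identities and the standard comparison of the two incarnations of the $\bsq$-Schur algebra.
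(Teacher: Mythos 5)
Your proposal is correct and matches the paper's (implicit) argument: the paper proves the corollary precisely by the observation in the paragraph preceding it, namely that for $n=0$ one always has $\dq_h=1$, $\ddq_h=\bsq$, $\bsq_h=\bsq$, so the scalars collapse to $\bsq^{\sum_{j>k}a_{h,j}}$ and $\bsq^{\sum_{j<k}a_{h+1,j}}$ as in \cite[Lemma 3.4]{BLM}. Your extra care about matching the Hecke-endomorphism and convolution-algebra conventions is a reasonable addition but not something the paper elaborates on.
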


We now make a comparison of these new formulas with ones given in \cite[Lemma 3.1]{DG}, derived through the relative norm method.

The $\sH$-module $\fT(m|n,r)$ is isomorphic to the tensor superspace $V(m|n)^{\otimes r}$ (over $\sZ$!) with an $\sH$-action defined in \cite[(1.0.10)]{DG}; see \cite[Proposition 8.3]{DR}. In fact, the endomorphism algebra of  $V(m|n)^{\otimes r}$ has a relative norm basis $\{N_A\}_{A\in M(m|n,r)}$ acting on the right. Matrix transposing may turn the right action to a left action and result in a basis denoted by $\{\zeta_A\}_{A\in M(m|n,r)}$. The $\sH$-module isomorphism induces an algebra isomorphism (cf. \cite[Corollary 8.4]{DR} and  \cite[Lemma 2.3]{DG2})
$$\End_{\sH}(V(m|n)^{\otimes r})^{\text{\rm op}}\longrightarrow \sS(m|n,r),\zeta_A\longmapsto (-1)^{\widehat A}\phi_A,$$
where  $\widehat{A}=\sum_{m<k<i\leq m+n,\\1\leq j<l\leq m+n}a_{i,j}a_{k,l}.$
\begin{corollary}\label{coincide}Let 
$$ f^+_{h,k}(\bsq,A)=\dq_{h+1}^{\sum_{j<k}a_{h+1,j}}\ddq_h^{\sum_{j>k}a_{h,j}}, \quad f^-_{h,k}(\bsq,A)=\dq_h^{\sum_{j>k}a_{h,j}}\ddq_{h+1}^{\sum_{j<k}a_{h+1,j}}.$$ Then 
$$(-1)^{\widehat{D}^+_h+\widehat A+\widehat{A}^+_{h,k}} f^+_{h,k}(\bsq,A)=f_k(\bsq,A,h)\text{ and
}(-1)^{\widehat{D}^-_h+\widehat A+\widehat{A}^-_{h,k}} f^-_{h,k}(\bsq,A)=g_k(\bsq,A,h),$$ where $f_k(\bsq,A,h)$ and $g_k(\bsq,A,h)$ are defined in \cite[(3.0.1-2)]{DG}. In particular, rewriting the multiplication formulas in Theorem \ref{KMF} in terms of the $\zeta$-basis results in the formulas in \cite[Lemma 3.1]{DG}. 
\end{corollary}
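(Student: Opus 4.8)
The plan is to verify the two scalar identities by comparing, sign by sign and exponent by exponent, the structure constants produced here with those of \cite[(3.0.1-2)]{DG}, and then to invoke the algebra isomorphism $\zeta_A\mapsto(-1)^{\widehat A}\phi_A$ from \cite[Corollary 8.4]{DR} to conclude. First I would recall from \cite[Lemma 3.1]{DG} the explicit shape of $f_k(\bsq,A,h)$ and $g_k(\bsq,A,h)$: each is a product of a power of $\bsq$ (or $\bsq^{\pm1}$), possibly a power of $-\bsq^{-1}$ or $-1$ coming from the odd rows, and a Gaussian-type factor $[\![a_{h,k}+1]\!]$ which already matches the one obtained in Theorem \ref{KMF}. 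Since the Gaussian factors agree (this was the content of \eqref{str const}), the claim reduces entirely to matching the prefactors $f^{\pm}_{h,k}(\bsq,A)$ against $f_k,g_k$ up to the sign $(-1)^{\widehat D^{\pm}_h+\widehat A+\widehat A^{\pm}_{h,k}}$.

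Next I would compute the sign exponent $\widehat D^+_h+\widehat A+\widehat A^+_{h,k}\pmod 2$ directly from the definition $\widehat A=\sum_{m<k<i\le m+n,\,1\le j<l\le m+n}a_{i,j}a_{k,l}$. Passing from $A$ to $A^+_{h,k}=A+E_{h,k}-E_{h+1,k}$ changes only entries in rows $h$ and $h+1$ of column $k$, so $\widehat A+\widehat A^+_{h,k}$ telescopes to a sum of the affected cross-terms; the matrix $D^+_h$, being $\diag(\lambda-\bse_{h+1})+E_{h,h+1}$, contributes its own small correction. I expect the net parity to be $\sum_{j>k}a_{h,j}$ when $h\ge m+1$ (and $0$ otherwise), precisely the exponent governing whether the $\ddq_h^{\sum_{j>k}a_{h,j}}$ factor carries a sign $(-1)^{\sum_{j>k}a_{h,j}}$ relative to a pure power of $\bsq$; similarly for the $\dq_{h+1}$ factor and the column-left sum $\sum_{j<k}a_{h+1,j}$. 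A short case analysis on the positions of $h$, $h+1$, $k$ relative to $m$ — the same four cases that appeared in the proof of \eqref{str const} — then shows $(-1)^{\widehat D^+_h+\widehat A+\widehat A^+_{h,k}}f^+_{h,k}(\bsq,A)$ equals the DG constant $f_k(\bsq,A,h)$ on the nose, and symmetrically for the minus version.

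Finally, with the two scalar identities in hand, rewriting Theorem \ref{KMF} in the $\zeta$-basis is a formal substitution: applying $\phi_A=(-1)^{\widehat A}\zeta_A$ (and likewise for $D^{\pm}_h$ and $A^{\pm}_{h,k}$) to both sides of formulas (1) and (2) and cancelling the common $(-1)^{\widehat A}$ converts the coefficient $f^{\pm}_{h,k}(\bsq,A)\,[\![a_{h,k}+1]\!]_{\bsq_h}$ into $(-1)^{\widehat D^{\pm}_h+\widehat A+\widehat A^{\pm}_{h,k}}f^{\pm}_{h,k}(\bsq,A)\,[\![a_{h,k}+1]\!]_{\bsq_h}=f_k(\bsq,A,h)\,[\![a_{h,k}+1]\!]_{\bsq_h}$, which is exactly the right-hand side of \cite[Lemma 3.1]{DG}. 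The main obstacle is bookkeeping: correctly evaluating the parity $\widehat D^{\pm}_h+\widehat A+\widehat A^{\pm}_{h,k}$ requires care about which cross-terms $a_{i,j}a_{k,l}$ with $m<k<i$ and $j<l$ actually change, since both the $+E_{h,k}$ and the $-E_{h+1,k}$ perturbations interact with every other odd-row entry in columns $<k$ and $>k$, and one must make sure the contribution of $D^{\pm}_h$ is not double-counted; but this is a finite, mechanical check and presents no conceptual difficulty.
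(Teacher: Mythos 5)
Your proposal follows essentially the same route as the paper: an explicit case analysis on the position of $h$ relative to $m$ to evaluate $f^{\pm}_{h,k}(\bsq,A)$ (cf.\ \eqref{Jie}), a computation of the parity $\widehat D^{\pm}_h+\widehat A+\widehat A^{\pm}_{h,k}$ (which the paper simply quotes from \cite[Lemma 5.1]{DG} rather than re-deriving), and the formal change of basis $\phi_A=(-1)^{\widehat A}\zeta_A$. One small correction to your anticipated answer: the parity is not zero in the boundary case $h=m$ — it equals $\mp\sum_{i>m+1,\,j<k}a_{i,j}$ there — so your case analysis must treat $h=m$ separately rather than lumping it with $h<m$, exactly as in the proof of \eqref{str const}.
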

\begin{proof} We have 
\begin{equation}\label{Jie}
f^+_{h,k}(\bsq,A)=
\begin{cases}
\bsq^{\sum_{j>k}a_{h,j}}, &\text{ if }h<m;\\
(-1)^{\sum_{j<k}a_{m+1,j}}\bsq^{-\sum_{j<k}a_{m+1,j}+\sum_{j>k}a_{m,j}},&\text{ if }h=m;\\
(-1)^{\sum_{j<k}a_{h+1,j}+\sum_{j>k}a_{h,j}}\bsq^{-\sum_{j<k}a_{h+1,j}},&\text{ if }h>m.\\
\end{cases}
\end{equation}
On the other hand (cf. \cite[Lemma 5.1]{DG}), for the choice of $+$ or $-$,
\begin{equation*}
\widehat{D}^\pm_{h}+\widehat{A}+\widehat{A}^\pm_{h,k}=\left\{
\begin{aligned}
&2\widehat{A} &\mbox{ if } h<m;\\
&\mp\sum_{i>m+1,j<k}a_{i,j}+2\widehat{A} &\mbox{ if } h=m;\\
&\mp\sum_{j>k}a_{h,j}\pm\sum_{j<k}a_{h+1,j}+2\widehat{A}  &\mbox{ if } h>m.
\end{aligned}
\right.
\end{equation*}
Adjusting the right hand side of \eqref{Jie} by the corresponding sign for the ``$+$'' case gives $f_k(\bsq,A,h)$. The ``$-$'' case is similar.
\end{proof}

Theorem \ref{KMF} and Corollary \ref{coincide} give a new method to derive  the key fundamental multiplication formulas given in \cite[Lemma 3.1]{DG}. 

By introducing the normalised basis $\{[A]\}_{A\in M(m|n,r)}$, where\footnote{The element $[A]$ is denoted by $\xi_A$ in \cite[(4.2.1)]{DG}.}
 $$[A]=(-1)^{\widehat{A}}\up^{-d(A)}\phi_A\;\text{ with }\; d(A)=\sum_{i>k,j<l}a_{i,j}a_{k,l}+\sum_{j<l}(-1)^{\widehat{i}}a_{i,j}a_{i,l},$$ we may modify the formulas given in Theorem \ref{KMF} to obtain further multiplication formulas for the $[\;\;]$-basis; cf. (the $p=1$ case of) \cite[Propositions 4.4\&4.5]{DG}. 
 
%For convenience of latter use, we rewrite (1) and (2) above for $p=1$.

\begin{corollary}\label{KyMF}
Maintain the notation above and let $\epsilon_{h,k}=0$ for $h\neq m$, and $\epsilon_{m,k}=
\sum_{i>m,j<k}a_{i,j}$. The following multiplication formulas hold in $\sS_R(m|n,r)$: 
\begin{itemize}
\item[(1)] $[D^+_h][A]=\displaystyle\sum_{k\in[1,m+n] \atop a_{h+1,k}\geq 1}(-1)^{\epsilon_{h,k}}\up_h^{f^+_{h,k}}\overline{[\![a_{h,k}+1]\!]}_{\up_h^2}[A^+_{h,k}]$,\\ where $f^+_{h,k}=\sum_{j\geq k}a_{h,j}-(-1)^{\widehat h+\widehat{h+1}}\sum_{j>k}a_{h+1,j}$;
%\vspace{1ex}
\item[(2)] $[D^-_h][A]=\displaystyle\sum_{k\in[1,m+n]\atop a_{h,k}\geq 1}(-1)^{\epsilon_{h,k}}\up_{h+1}^{f^-_{h,k}}\overline{[\![a_{h+1,k}+1]\!]}_{\up_{h+1}^2}[A^-_{h,k}],$\\ where  $f^-_{h,k}=\sum_{j\leq k}a_{h+1,j}-(-1)^{\widehat h+\widehat{h+1}}\sum_{j<k}a_{h,j}$.
\end{itemize}

\end{corollary}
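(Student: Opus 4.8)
The plan is to derive Corollary \ref{KyMF} as a direct translation of Theorem \ref{KMF} under the change of basis $\phi_A\rightsquigarrow[A]=(-1)^{\widehat A}\up^{-d(A)}\phi_A$. First I would set $\lambda=\ro(A)$ and observe that $\ro(D^+_h)=\ro(A^+_{h,k})=\lambda^{[h^+]}$ while $\co(D^+_h)=\co(A)=:\mu$ and $\co(A^+_{h,k})=\mu$, so all three matrices $D^+_h$, $A$, $A^+_{h,k}$ appearing in formula (1) of Theorem \ref{KMF} have compatible row/column sums; this makes the substitution well-defined term by term. Plugging $\phi_{D^+_h}=(-1)^{\widehat{D^+_h}}\up^{d(D^+_h)}[D^+_h]$, etc., into Theorem \ref{KMF}(1) and solving for $[D^+_h][A]$, the coefficient of $[A^+_{h,k}]$ becomes
$$
(-1)^{\widehat{D^+_h}+\widehat A+\widehat{A^+_{h,k}}}\,\up^{\,d(D^+_h)+d(A)-d(A^+_{h,k})}\;\dq_{h+1}^{\sum_{j<k}a_{h+1,j}}\ddq_h^{\sum_{j>k}a_{h,j}}\,[\![a_{h,k}+1]\!]_{\bsq_h}.
$$
So there are three bookkeeping tasks: (i) compute the sign $(-1)^{\widehat{D^+_h}+\widehat A+\widehat{A^+_{h,k}}}$, which is exactly the sign computed in the proof of Corollary \ref{coincide} (it equals $1$ for $h<m$, involves $\sum_{i>m+1,j<k}a_{i,j}$ at $h=m$, and $\mp\sum_{j>k}a_{h,j}\pm\sum_{j<k}a_{h+1,j}$ for $h>m$); (ii) compute the $\up$-power $d(D^+_h)+d(A)-d(A^+_{h,k})$ from the explicit formula for $d(-)$; (iii) combine the $\dq,\ddq$ factors and the $[\![\,]\!]$-Gaussian-polynomial factor, converting the unbalanced Gaussian polynomial $[\![a_{h,k}+1]\!]_{\bsq_h}$ in variable $\bsq_h=\up_h^2$ into the balanced one $\overline{[\![a_{h,k}+1]\!]}_{\up_h^2}$ by extracting the monomial prefactor $\up_h^{a_{h,k}}$ (more precisely $\up_h^{\,\text{const}}$ so that $[\![N]\!]_{\up_h^2}=\up_h^{N-1}\overline{[\![N]\!]}_{\up_h^2}$) and absorbing its $\up$-power into $f^+_{h,k}$.

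The key computation is step (ii). Since $A^+_{h,k}=A+E_{h,k}-E_{h+1,k}$, the difference $d(A)-d(A^+_{h,k})$ is a linear (actually bilinear-difference) expression in the entries $a_{i,j}$ obtained by expanding $d(A+E_{h,k}-E_{h+1,k})$; the quadratic terms mostly cancel and what survives is a sum over $i,j$ in prescribed ranges of the off-diagonal entries $a_{h,j},a_{h+1,j}$, plus the diagonal-row contribution $\sum_{j<l}(-1)^{\widehat i}a_{i,j}a_{i,l}$ which changes in rows $h$ and $h+1$. Adding $d(D^+_h)$ — for which $D^+_h$ is diagonal plus a single off-diagonal $1$ in position $(h,h+1)$, so $d(D^+_h)$ is itself a short explicit sum — one checks the total equals $f^+_{h,k}-a_{h,k}$ (the $-a_{h,k}$ being exactly what is needed for the Gaussian-polynomial normalisation), where $f^+_{h,k}=\sum_{j\geq k}a_{h,j}-(-1)^{\widehat h+\widehat{h+1}}\sum_{j>k}a_{h+1,j}$. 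I expect this to be the main obstacle: it is a careful but routine cancellation, and the only subtlety is keeping the parity signs $(-1)^{\widehat i}$ straight across the even/odd block boundary, i.e. separating the cases $h<m$, $h=m$, $h>m$ and, within each, $k\le m$ versus $k>m$, exactly as in the case analysis already carried out for \eqref{str const} and in Corollary \ref{coincide}.

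Assembling (i)–(iii): the sign contributions combine to $(-1)^{\epsilon_{h,k}}$ with $\epsilon_{h,k}=0$ for $h\neq m$ and $\epsilon_{m,k}=\sum_{i>m,j<k}a_{i,j}$ — here one notes that at $h=m$ the sign $\sum_{i>m+1,j<k}a_{i,j}$ from Corollary \ref{coincide} combined with the sign coming from $\dq_{m+1}^{\sum_{j<k}a_{m+1,j}}=(-1)^{\sum_{j<k}a_{m+1,j}}\bsq^{-\sum_{j<k}a_{m+1,j}}$ (see \eqref{Jie}) produces $(-1)^{\sum_{i>m,j<k}a_{i,j}}$, since $m+1$ is adjoined — and the $\up$-powers combine to give $\up_h^{f^+_{h,k}}\overline{[\![a_{h,k}+1]\!]}_{\up_h^2}$. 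This yields formula (1). Formula (2) is obtained by the identical argument applied to Theorem \ref{KMF}(2), with $A^-_{h,k}=A-E_{h,k}+E_{h+1,k}$, or alternatively by invoking the transpose/duality symmetry $\sS_R(m|n,r)\cong\sS_R(m|n,r)^{\mathrm{op}}$ used implicitly elsewhere, swapping the roles of rows $h$ and $h+1$ and of $\dq_h\ddq_{h+1}$ versus $\dq_{h+1}\ddq_h$; either way the parity case-check is the symmetric one. Finally, the formulas hold over $\sZ$ and hence, by base change via Lemma \ref{DR5.8} ($\sS_R(m|n,r)\cong\sS(m|n,r)\otimes_\sZ R$), over any commutative ring $R$ of characteristic $\neq2$, as stated.
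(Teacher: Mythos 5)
Your proposal is correct and follows essentially the same route as the paper, which obtains the corollary simply by rewriting Theorem \ref{KMF} in the normalised basis and defers the sign/power bookkeeping to the $p=1$ case of \cite[Propositions 4.4--4.5]{DG}. One transcription slip: after solving for $[D^+_h][A]$ the exponent of $\up$ is $d(A^+_{h,k})-d(A)-d(D^+_h)$ rather than its negative, a discrepancy your verification in step (ii) against $f^+_{h,k}-a_{h,k}$ would in any case catch.
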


% Then, by modifying the construction of Beilinson--Lusztig--MacPherson \cite{BLM}  for quantum $\mathfrak{gl}_n$ (cf., \cite{DF}), we obtained a new construction (or a realisation) in terms of a basis and their multiplication formulas by generators for the quantum super $\mathfrak{gl}_{m|n}$ \cite{DG}.
 
The first important application of the multiplication formulas above is a new realisation of the quantum supergroup ${\bf U}_\up(\mathfrak{gl}_{m|n})$; see the argument from \cite[\S5]{DG} onwards and, in particular, see \cite[Definition 6.1, Theorem 8.4]{DG}.

We now  seek further applications of these multiplication formulas. 

We will show below that the formulas provide enough information for the regular representation of the integral $q$-Schur superalgebra $\sS_R(m|n,r)$. We then use such a representation to determine the semisimplicity of $q$-Schur superalgebras and to construct infinitesimal and little ones without involving the quantum supergroup or quantum coordinate superalgebra.

We return to the general setting for $\sS_R(m|n,r)$ defined relative to a commutative ring $R$ and an invertible parameter $\ups\in R$ or $q=\ups^2$. Base change via $\sZ\to R, \up\mapsto\ups$, we may turn the multiplication formulas in $\sS(m|n,r)$ into similar formulas in $\sS_R(m|n,r)$. In fact, these formulas can be interpreted as the matrix representation of certain generators for $\sS_R(m|n,r)$ relative to the basis $\{[A]\}_{A\in M(m|n,r)}$. 
%This gives sufficient information for the regular representation of $\sS_R(m|n,r)$.

Let
$$M(m|n)^\pm=\{A=(a_{i,j})\in M(m|n)\mid a_{i,i}=0,
1\leq i\leq m+n\}.$$
For $A\in M(m|n)^{\pm}$ and
$\bsj=(j_1,j_2,\cdots,j_{m+n})\in\mathbb{Z}^{m+n}$, define
\begin{equation}\label{Ajr}
A(\bsj,r)=\begin{cases}\sum_{\substack{\lambda\in\Lambda(m|n,r-|A|)}}(-1)^{\overline{A+\lambda}}\ups^{\lambda*\bsj}[A+\lambda],&\text{ if }|A|\leq r;\\
0,&\text{ otherwise,}\end{cases}
\end{equation}
where $\lambda*\bsj=\sum_{i=1}^{m+n}(-1)^{\widehat{i}}\lambda_ij_i$ is the super (or signed) ``dot product'', $A+\la=A+\diag(\la)$ and $\overline{M}=\sum_{\substack{m+n\geq i> m\geq k\geq1
\\m<j<l\leq m+n}}m_{i,j}m_{k,l}$ for a matrix $M$. We also let $1_\la=[\diag(\la)]$ for all $\la\in\La(m|n,r)$, the identity map on $\xy_\la\sH_R$. Then
$1_\la[A]=\delta_{\la,\ro(A)}[A].$
For the zero matrix $O$, $\bse_i\in\La(m|n,1)$ and $p\geq1$, set
$$\sck_i=O(\bse_i,r),\quad \sce_h^{(p)}=(pE_{h,h+1})(\mathbf0,r),\quad \scf_h^{(p)}=(pE_{h+1,h})(\mathbf0,r).$$
Note that $\sck_i=\sum_{\la\in\La(m|n,r)}\ups^{(-1)^{\widehat i}\la_i}1_\la$ and $\sce_m^2=0=\scf_m^2$.

Let $\sS_R^-$, $\sS_R^+$ be the subsuperalgebra of $\sS_R(m|n,r)$ generated respectively by $\scf_h^{(p)}$, $\sce_h^{(p)}$ for all $1\leq h<m+n$, $p\geq1$, and $\sS_R^0$ the subsuperalgebra spanned by all $1_\la$.

The first assertion of the following is \cite[Corollary 8.5]{DG}.
\begin{theorem}\label{KeyMF}
 The $q$-Schur superalgebra $\sS_R=\sS_R(m|n,r)$ is generated by
$\sck_i,$ $1_\la,$ $\sce_h^{(p)},\scf_h^{(p)}$ 
for all $1\leq h,i\leq m+n, h\not=m+n,$ $\la\in\La(m|n,r)$, $1\leq p\leq r$, and $\sS_R=\sS_R^+\sS_R^0\sS_R^-$.
These generateors have the following matrix representations relative to the basis $\{[A]\}_{A\in M(m|n,r)}$:
\begin{itemize}
\item[(0)] $\sck_i[A]=\ups^{(-1)^{\widehat i}\ro(A)_i}[A]$, \; $1_\la[A]=\delta_{\la,\ro(A)}[A]$;
\vspace{2ex}

\item[(1)] $\sce_h^{(p)}[A]=\displaystyle\sum_{\substack{\nu\in\Lambda(m|n,p)\\\nu\leq
 \row_{h+1}(A)}}\ups_h^{f^+_h(\nu,A)}\prod_{k=1}^{m+n}\overline{\left[\!\!\left[a_{h,k}+\nu_k\atop\nu_k\right]\!\!\right]}_{\ups_h^2}
 [A+\sum_l\nu_l(E_{h,l}-E_{h+1,l})],$\vspace{2ex}
 \item[] where $h\neq m$,
$f^+_h(\nu,A)=\sum_{j\geq
 t}a_{h,j}\nu_t-\sum_{j>t}a_{h+1,j}\nu_t+\sum_{t<t'}\nu_t\nu_{t'}$ and $\nu\leq\nu'$ means that $\nu_i\leq\nu_i'$ for all $i$;\vspace{2ex}
 
\item[(2)]$\scf_h^{(p)}[A]=\displaystyle\sum_{\substack{\nu\in\Lambda(m|n,p)\\\nu\leq
 \row_{h}(A)}}\ups_{h+1}^{f^-_h(\nu,A)}\prod_{k=1}^{m+n}\overline{\left[\!\!\left[a_{h+1,k}+\nu_k\atop\nu_k\right]\!\!\right]}_{\ups_{h+1}^2}
 [A-\sum_l\nu_l(E_{h,l}-E_{h+1,l})],$\vspace{2ex}
\item[] where $h\neq m$ and
$ f^-_h(\nu,A)=\sum_{j\leq
 t}a_{h+1,j}\nu_t-\sum_{j<t}a_{h,j}\nu_t+\sum_{t<t'}\nu_t\nu_{t'}$;\vspace{2ex}
 
 \item[(3)] $\sce_m[A]=\displaystyle\sum_{k\atop a_{m+1,k}\geq 1}(-1)^{\sum_{i>m,j<k}a_{i,j}}\ups_m^{f^+_{m,k}(A)}\overline{[\![a_{m,k}+1]\!]}_{\ups_m^2}[A^+_{m,k}],$\vspace{2ex}
 \item[] where $f^+_{m,k}(A)=\sum_{j\geq k}a_{m,j}+\sum_{j>k}a_{m+1,j}$;\vspace{2ex}
 
\item[(4)] $\scf_m[A]=\displaystyle\sum_{k\atop a_{m,k}\geq 1}(-1)^{\sum_{i>m,j<k}a_{i,j}}\ups_{m+1}^{f^-_{m,k}(A)}\overline{[\![a_{m+1,k}+1]\!]}_{\ups_{m+1}^2}[A^-_{m,k}],$ \vspace{2ex}
\item[] where $f^-_{m,k}(A)=\sum_{j\leq k}a_{m+1,j}+\sum_{j<k}a_{m,j}$. \vspace{2ex}
\end{itemize}
\end{theorem}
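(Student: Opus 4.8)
The plan is to deduce Theorem \ref{KeyMF} as a consequence of the single-box multiplication formulas already proved in Corollary \ref{KyMF}, together with a straightforward bookkeeping of how the normalised basis $\{[A]\}$ and the divided-power elements $\sce_h^{(p)},\scf_h^{(p)}$ interact. First I would dispose of parts (0), (3), (4): part (0) is immediate from the definitions of $\sck_i=O(\bse_i,r)$ and $1_\la=[\diag(\la)]$ together with the identity $1_\la[A]=\delta_{\la,\ro(A)}[A]$ noted just before the theorem, while parts (3) and (4) are exactly the $h=m$ instances of Corollary \ref{KyMF}(1),(2) — here one only needs to observe that $\sce_m=\sce_m^{(1)}=E_{m,m+1}(\mathbf 0,r)=[D^+_m]$ summed over the appropriate diagonal shifts, check that the exponent $f^+_{m,k}$ in Corollary \ref{KyMF}(1) specialises to $\sum_{j\ge k}a_{m,j}+\sum_{j>k}a_{m+1,j}$ because $(-1)^{\widehat m+\widehat{m+1}}=-1$, and that $\epsilon_{m,k}=\sum_{i>m,j<k}a_{i,j}$ matches the stated sign. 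The generation statement $\sS_R=\sS_R^+\sS_R^0\sS_R^-$ and the fact that these elements generate is cited as \cite[Corollary 8.5]{DG}, so nothing new is needed there.

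The substantive part is (1) and (2), i.e.\ promoting the $p=1$ formula $[D^+_h][A]$ to the divided-power formula $\sce_h^{(p)}[A]$ for $h\ne m$. The approach is induction on $p$. For $h\ne m$ the relevant $2\times 2$ block behaves like the classical (non-super) $\mathfrak{gl}_2$ situation — $\sce_m$ is absent, the squares $\sce_m^2=0$ play no role, and $\up_h=\up$ or $\up^{-1}$ uniformly — so the combinatorial identity to be proved is formally the same as in the BLM setting and in \cite[Proposition 4.4]{DG}. Concretely, I would compute $\sce_h^{(1)}\cdot\sce_h^{(p-1)}[A]$ in two ways: expand $\sce_h^{(p-1)}[A]$ by the inductive hypothesis, then apply Corollary \ref{KyMF}(1) termwise, and collect the coefficient of each $[A+\sum_l\nu_l(E_{h,l}-E_{h+1,l})]$ with $|\nu|=p$. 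On the other side $\sce_h^{(1)}\sce_h^{(p-1)}=[\![p]\!]_{\ups_h^2}\sce_h^{(p)}$ in the divided-power normalisation. Matching the two expressions reduces to a Gaussian-binomial identity: the sum over intermediate $\nu'$ with $\nu'\le\nu$, $|\nu'|=p-1$, of products $\prod_k\overline{\genfrac[]{0pt}{}{a_{h,k}+\nu'_k}{\nu'_k}}\cdot\overline{[\![a_{h,k}+\nu'_k+\delta_{k,t}]\!]}$ weighted by the appropriate powers of $\ups_h$ coming from $f^+_h$, equals $[\![p]\!]_{\ups_h^2}\prod_k\overline{\genfrac[]{0pt}{}{a_{h,k}+\nu_k}{\nu_k}}$ times the power $\ups_h^{f^+_h(\nu,A)}$. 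This is the $q$-Vandermonde / $q$-Pascal identity $\genfrac[]{0pt}{}{a+\nu_k}{\nu_k}=\sum \ups^{\cdots}\genfrac[]{0pt}{}{a+\nu_k-1}{\nu_k-1}[\![a+\nu_k]\!]/[\![\nu_k]\!]$ bundled across the $m+n$ columns, with the cross term $\sum_{t<t'}\nu_t\nu_{t'}$ in $f^+_h$ accounting exactly for the ordering of the boxes added column by column. I would carry this out carefully for (1) and then note that (2) follows by the same argument with $h,h+1$ interchanged and $\ups_h\leftrightarrow\ups_{h+1}$, or alternatively by the anti-involution/transpose symmetry already used in the proof of Lemma \ref{DR5.8}.

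The main obstacle I anticipate is purely notational: keeping the super-sign factors $(-1)^{\overline{A+\lambda}}$ hidden inside $A(\bsj,r)$ consistent with the signs $(-1)^{\widehat A}$ in the definition of $[A]$ and with the $(-1)^{\epsilon_{h,k}}$ appearing in Corollary \ref{KyMF}, so that in the $h\ne m$ case they all cancel and leave the sign-free formula stated in (1),(2). Since for $h\ne m$ rows $h$ and $h+1$ have the same parity, $\overline{A+\sum_l\nu_l(E_{h,l}-E_{h+1,l})}$ differs from $\overline A$ by a quantity independent of $\nu$ (one checks the defining sum $\sum_{i>m\ge k,\;m<j<l} m_{i,j}m_{k,l}$ only sees the column index pattern, not which of the two equal-parity rows a box sits in), so the signs indeed drop out; I would isolate this as a short lemma before running the induction. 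A secondary, minor point is to confirm that $\sce_h^{(p)}$ as \emph{defined} in \eqref{Ajr} by $(pE_{h,h+1})(\mathbf0,r)$ really is a divided power, i.e.\ satisfies $\sce_h^{(1)}\sce_h^{(p-1)}=[\![p]\!]_{\ups_h^2}\sce_h^{(p)}$; but this is forced once the $p=p$ formula is established, or can be read off from \cite[\S5]{DG}, so no circularity arises if one takes the multiplication formula (1) itself as the inductive statement and derives the divided-power relation as a corollary.
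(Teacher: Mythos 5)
Your proposal is correct in substance and agrees with the paper on parts (0), (3), (4) and on the generation statement (both simply quote \cite[Corollary 8.5]{DG} and read (3), (4) off the $h=m$ case of Corollary \ref{KyMF}). Where you genuinely diverge is in parts (1) and (2): the paper does not re-derive the divided-power formulas at all — it reduces $\sce_h^{(p)}[A]$ to $(-1)^{\overline{D^+_{h,p}}}[D^+_{h,p}][A]$ via $\sce_h^{(p)}1_{\ro(A)}=(-1)^{\overline{D^+_{h,p}}}[D^+_{h,p}]$ and then cites \cite[Proposition 4.4]{DG} for the product $[D^+_{h,p}][A]$ and \cite[Lemma 5.1(1)]{DG} for the vanishing of the sign $\overline{D^\pm_{h,p}}$. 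You instead propose to bootstrap the general $p$ from the $p=1$ case (Corollary \ref{KyMF}) by induction, matching coefficients in $\sce_h\cdot\sce_h^{(p-1)}[A]$ against $\sce_h\sce_h^{(p-1)}=[p]_{\ups_h}\sce_h^{(p)}$ via a bundled $q$-Pascal identity, and you handle the super signs by a direct parity argument on $\overline{A}$ rather than by citation. Your route is more self-contained and in fact closer in spirit to the paper's stated aim of making everything follow from the combinatorial $p=1$ formulas, at the cost of the nontrivial Gaussian-binomial bookkeeping (including the cross term $\sum_{t<t'}\nu_t\nu_{t'}$) that the citation sidesteps; the paper's proof buys brevity at the cost of leaning on \cite{DG}, whose Proposition 4.4 was itself obtained by the relative-norm method the present paper is trying to replace.

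One point you should make explicit: the inductive step requires cancelling $[p]_{\ups_h}$ from both sides of an identity in $\sS_R(m|n,r)$, and over an arbitrary $R$ (e.g.\ at a root of unity) this element may be zero. The fix is standard but must be stated: run the induction in $\sS(m|n,r)$ over $\sZ=\mathbb Z[\up,\up^{-1}]$, where the algebra is $\sZ$-free by Lemma \ref{DR5.8} and $[p]\neq0$, so $[p]x=[p]y$ forces $x=y$; then specialise via $\sS_R(m|n,r)\cong\sS(m|n,r)\otimes_\sZ R$. With that added, and with the auxiliary relation $\sce_h\sce_h^{(p-1)}1_\mu=[p]_{\ups_h}\sce_h^{(p)}1_\mu$ itself checked from Corollary \ref{KyMF} applied to $A=D^+_{h,p-1}$ (whose $(h+1)$-th row is concentrated on the diagonal, so only $k=h+1$ survives), your argument closes without circularity.
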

\begin{proof} The first assertion follows from in \cite[Corollary 8.5]{DG} (cf. \cite[Theorem 6.3]{DG}). Now the relations in (0) are clear.
Since $\sce_h^{(p)}[A]=\sce_h^{(p)}1_{\ro(A)}[A]$, $\scf_h^{(p)}[A]=\scf_h^{(p)}1_{\ro(A)}[A]$, and $\sce_h^{(p)}1_{\ro(A)}=(-1)^{\overline{D^+_{h,p}}}[D^+_{h,p}]$, $\scf_h^{(p)}1_{\ro(A)}=(-1)^{\overline{D^-_{h,p}}}[D^-_{h,p}]$, where the matrices $D^\pm_{h,p}\in M(m|n,r)$ are defined by the conditions that
$\co(D^\pm_{h,p})=\ro(A)$ and $D^+_{h,p}-pE_{h,h+1}$, $D^-_{h,p}-pE_{h+1,h}$ are diagonal, (1) and (2) follow from \cite[Proposition 4.4]{DG}\footnote{$D^+_{h,p}$, $D^-_{h,p}$ are denoted by $U_p$, $L_p$.} and \cite[Lemma 5.1(1)]{DG} which tells $\overline{D^\pm_{h,p}}=0$. The remaining (3) and (4) follow from the $h=m$ case of Corollary \ref{KyMF}; see \cite[Proposition 4.5]{DG}.
\end{proof}

%\begin{remark} If $R=\mathbb Q(\up)$ and $q=\up^2$, then $\sS_{\mathbb Q(\up)}(m|n,r)$ can be presented by generators $\sck_i,\sce_h,\scf_h$ and relations as given in \cite[Theorem 3.3.1]{EK}. 
Note that we have in $\sS_F(m|n,r)$
\begin{equation}\label{[E,F]}
\sce_h\scf_k-(-1)^{\widehat h\widehat k}\scf_k\sce_h=\delta_{h,k}\frac{\sck_h\sck_{h+1}^{-1}-\sck_h^{-1}\sck_{h+1}}{\ups_h-\ups_h^{-1}}.
\end{equation}
%\end{remark}

\section{Semisimple $q$-Schur superalgebras}

The most fabulous application of the multiplication formulas is the realisations of quantum $\mathfrak{gl}_n$ \cite{BLM}
and quantum super $\mathfrak{gl}_{m|n}$ \cite{DG}. We now use these formulas to construct certain modules from which we obtain a semisimplicity criterion of $q$-Schur superalgebras. {\it From now on, 
let $F$ be a field of characteristic $\neq2$ and assume that $\ups\in F^\times$ and $q=\ups^2\neq1$.} Since every simple $\sS_F(m|n,r)$-supermodule is also a simple $\sS_F(m|n,r)$-module (see e.g., \cite[Proposition 4.1]{DGW2}), we will drop the prefix ``super'' in the sequel for simplicity.

We first determine the semisimplicity for $\sS_F(1|1,r)$ (see \cite{mz} for the $q=1$ case).

\begin{lemma}\label{ss1}Assume that $q\neq1$  is a primitive $l$-th root of unity.
\begin{itemize}\item[(1)]
If $l \nmid r$  then $\sS_F(1|1, r)$ is semisimple and has
exact $r$ nonisomorphic  irreducible modules  which  are all two dimensional.
%In particular, $\{V(\lambda)\, |\, \lambda\in \Lambda(1|1,r), \lambda_1>0\}$ gives the complete list of nonisomorphic  irreducible modules.

\item[(2)]
If $l \mid r$  then $\sS_F(1|1, r)$ is not semisimple and has
exact $r+1$ nonisomorphic  irreducible modules  which  are all one dimensional.
\end{itemize}
%In particular, $\{L(\lambda)\, |\, \lambda\in \Lambda(1|1,r)\}$ gives the complete list of nonisomorphic  irreducible modules.
\end{lemma}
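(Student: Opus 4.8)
The plan is to realise $\sS_F(1|1,r)$ explicitly via the basis $\{[A]\}$ and the matrix representations of the generators from Theorem \ref{KeyMF}, then decompose the regular representation. For $m=n=1$ the set $M(1|1,r)$ consists of the $2\times2$ matrices over $\mathbb N$ with entry sum $r$ satisfying the $\sD^\circ$-condition \eqref{Dcirc}, which forces the off-diagonal entries $a_{12},a_{21}$ to be $0$ or $1$; so $M(1|1,r)$ has exactly $2(r-1)+2 = 2r$ elements when $r\geq1$, namely the diagonal matrices $\diag(r-t,t)$ for $0\leq t\leq r$ (that is $r+1$ of them) together with the matrices $\diag(r-1-t,t)+E_{12}$ and $\diag(r-1-t,t)+E_{21}$. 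I would index the weights by $\la=(r-t,t)\in\La(1|1,r)$, so $1_\la$ for $0\leq t\leq r$ give $r+1$ orthogonal idempotents summing to $1$, and note $\sce:=\sce_1$, $\scf:=\scf_1$ (here $m=1$ so $h=m$, only the ``$\sce_m,\scf_m$'' case (3)-(4) of Theorem \ref{KeyMF} applies, and $\sce^2=\scf^2=0$). The key structural point is that $\sce$ raises $t$ by $1$ and $\scf$ lowers it by $1$, so $\sS_F(1|1,r)$ is spanned by $1_\la$, $\sce\,1_\la$, $\scf\,1_\la$, and the algebra decomposes along the ``ladder'' of weights $t=0,1,\dots,r$.

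Next I would compute the relevant structure constants from Theorem \ref{KeyMF}(3)-(4) and \eqref{[E,F]}. With $\ups_1=\ups$, $\ups_2=\ups^{-1}$, and $q=\ups^2$, the commutator relation \eqref{[E,F]} reads $\sce\scf+\scf\sce = \frac{\sck_1\sck_2^{-1}-\sck_1^{-1}\sck_2}{\ups-\ups^{-1}}$, and on the weight space $1_\la$ with $\la=(r-t,t)$ one has $\sck_1=\ups^{r-t}$, $\sck_2=\ups^{-t}$ (up to the sign conventions built into $\sck_i$), so $\sce\scf+\scf\sce$ acts on $1_\la$ as the scalar $\frac{\ups^{r-2t}-\ups^{2t-r}}{\ups-\ups^{-1}} = [r-2t]$ (a Gaussian integer). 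Thus for each $t$ with $0\leq t\leq r-1$, the ``block'' spanned by $1_{(r-t,t)},1_{(r-t-1,t+1)}$ together with the single $\sce$ going up and the single $\scf$ coming down behaves like a $2\times2$ matrix problem whose semisimplicity is governed by whether $[r-2t]\neq0$ — equivalently whether $\sce\scf$ and $\scf\sce$ are simultaneously nonzero on that block. Summing the scalar contributions telescopes, and the upshot is that the whole algebra is a direct sum of $r$ blocks, the $t$-th block being $M_2(F)$ exactly when $[r-2t]\neq0$ and a non-semisimple $4$-dimensional algebra (the path algebra of $\bullet\rightleftarrows\bullet$ with radical square zero) when $[r-2t]=0$, plus possibly a leftover $1$-dimensional piece at the ends.

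The arithmetic then finishes the argument: if $q$ is a primitive $l$-th root of unity, then $[k]=0$ iff $l\mid k$. In part (1), when $l\nmid r$, none of the integers $r-2t$ for $0\leq t\leq r-1$ is divisible by $l$ — here I must be slightly careful: $r-2t$ ranges over $r, r-2, \dots, 2-r$ or $-r$, i.e. all integers of the same parity as $r$ in $[-r,r]$ except... actually it omits $-r$ when we only go to $t=r-1$, giving values $r-2t$, $t=0,\dots,r-1$; none is $\equiv 0$ mod $l$ provided $l\nmid r$ and (the potential parity subtlety with $l$ even) one checks $r-2t\equiv 0\pmod l$ would force $l\mid r$ when combined with $l\mid 2t$... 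I'd verify this elementary claim directly. So every block is $M_2(F)$, giving exactly $r$ blocks, $\sS_F(1|1,r)$ semisimple with $r$ non-isomorphic irreducibles all of dimension $2$. In part (2), when $l\mid r$, the value $t=r/2$ is an integer in $\{0,\dots,r-1\}$ only if... again a small check (if $l\mid r$ and $l$ is even then $r/2$ may or may not be an integer in range) — the cleaner statement is that $[r-2t]=0$ for some $t$ in the relevant range exactly when $l\mid r$, the algebra is not semisimple, its blocks become ``serial'' and after computing $\dim$ of the radical one finds the number of irreducibles jumps to $r+1$, all $1$-dimensional (each weight $1_\la$ now supports its own simple). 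The main obstacle I anticipate is pinning down the boundary/parity bookkeeping precisely — exactly which $2r$ matrices lie in $M(1|1,r)$, exactly which scalars $[r-2t]$ appear and for which range of $t$, and correctly counting simple modules at the two ends of the ladder — rather than any deep structural difficulty; once the $2\times2$ block picture is set up, the semisimplicity dichotomy is immediate from $[k]=0\iff l\mid k$.
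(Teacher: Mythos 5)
There are two concrete errors here, and both matter. First, your enumeration of $M(1|1,r)$ is wrong: besides the $r+1$ diagonal matrices and the $2r$ matrices with a single off-diagonal entry equal to $1$, the set also contains the $r-1$ matrices $\left(\begin{smallmatrix}d&1\\1&r-d-2\end{smallmatrix}\right)$, $0\le d\le r-2$, with \emph{both} off-diagonal entries equal to $1$ (condition \eqref{Dcirc} forces $a_{12},a_{21}\le1$, not $a_{12}a_{21}=0$). Hence $\dim\sS_F(1|1,r)=4r$, not $2r$, and the algebra is not spanned by $1_\la,\,\sce 1_\la,\,\scf 1_\la$. These extra basis elements are essential: Theorem \ref{KeyMF}(4) gives $\scf_1[A_a^+]=\ups^{-(r-1)}[\![r-a]\!]_q1_a+[A_{a-1}^\pm]$ (where $A_a^+$, $A_{a-1}^\pm$ denote the matrices with first column $(a,0)^t$, resp.\ $(a-1,1)^t$, and $a_{12}=1$), so the two-dimensional simples are spanned by $[A_a^+]$ and this two-term combination, and each left ideal $\sS_F1_{(a,r-a)}$ with $1\le a\le r-1$ is four-dimensional. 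Your ladder picture is missing half the algebra.

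Second, and more damagingly, the scalar you extract from \eqref{[E,F]} is wrong. Because of the super sign in $\sck_i=\sum_\la\ups^{(-1)^{\widehat i}\la_i}1_\la$, on the weight space of $\la=(r-t,t)$ one has $\sck_1=\ups^{r-t}$ but $\sck_2=\ups^{-t}$, so $\sck_1\sck_2^{-1}=\ups^{(r-t)+t}=\ups^{r}$ and $\sce\scf+\scf\sce$ acts as $\frac{\ups^{r}-\ups^{-r}}{\ups-\ups^{-1}}=[r]$ on \emph{every} weight space, independently of $t$. Your value $[r-2t]$ is the non-super $\mathfrak{gl}_2$ answer, and with it part (1) would actually fail: for $l=3$, $r=5$, $t=1$ you would get $[3]=0$ although $l\nmid r$. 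The constancy of this Casimir-type scalar is exactly why the dichotomy depends only on whether $l\mid r$, and it is the engine of the paper's argument: when $[r]\neq0$ each four-dimensional column splits as a direct sum of two two-dimensional irreducibles and the two end columns are irreducible, giving $r$ simples; when $[r]=0$ (together with $\sce^2=\scf^2=0$) each of these becomes indecomposable with one-dimensional head and socle, producing the $r+1$ one-dimensional simples --- a step you only assert ("computing the dimension of the radical") rather than prove. Your overall strategy (explicit matrix representation of the generators plus the commutator relation) is the paper's, but the basis count and the key scalar must both be corrected before the block analysis goes through.
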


\begin{proof} Let $\sS_F=\sS_F(1|1,r)$. We first observe that
$$M(1|1,r)=\{A_a,A_b^+,A_c^-,A_d^\pm\mid a\in[0, r], b,c\in[0,r-1],d\in[0,r-2]\},$$
where $A_a,A_b^+,A_c^-,A_d^\pm$ denote respectively the following matrices
$$\begin{pmatrix}
             a & 0 \\
             0   & r-a
           \end{pmatrix},\;
         \begin{pmatrix}
             b& 1 \\
             0 & r-b-1
           \end{pmatrix},\;
           \begin{pmatrix}
             c & 0 \\
             1   & r-c-1
           \end{pmatrix},\;
           \begin{pmatrix}
             d & 1 \\
             1 & r-d-2
           \end{pmatrix}.
                 $$
Note that $1_a:=1_{(a,r-a)}=[A_a]$ and $\sum_{a=0}^r1_a$ is the identity element. So
$$\sS_F=\bigoplus_{a=0}^{r}\sS_F1_{a}\quad\text{and}\quad\dim\sS_F=4r.
$$        
Since $\sS_F1_a$ is spanned by $[A]$ with $\co(A)=(a,r-a)$, it follows that
$$\aligned
\sS_F1_0&=\text{span}\{1_0,[A_0^+]\}, \quad\sS_F1_r=\text{span}\{1_r,[A_{r-1}^-]\},\\
\sS_F1_a&=\text{span}\{1_a,[A_a^+],[A_{a-1}^-],[A_{a-1}^\pm]\}, \forall a\in[1,r-1].
\endaligned$$

%(1) {\bf Assume now $l\nmid r$.} 
By Theorem \ref{KeyMF}(3)\&(4), we have 
$$\aligned
&\sce_1[A_0^+]=0,\;\scf_1[A_0^+]=\ups^{-(r-1)}[\![r]\!]_{q}1_0,\;\sce_11_0=[A_0^+],\;\scf_11_0=0,\;\\
&\scf_1[A_{r-1}^-]=0,\;\sce_1[A^-_{r-1}]=\ups^{r-1}[\![r]\!]_{q^{-1}}1_r, \;\sce_11_r=0,\; \scf_11_r=[A_{r-1}^-].\\
\endaligned$$
If $l\nmid r$, then $\ups^{-(r-1)}[\![r]\!]_{q}=\ups^{r-1}[\![r]\!]_{q^{-1}}\neq0$ in $F$, and we see easily that $L(1):=\sS_F1_0$ is irreducible. Similarly, $L(r):=\sS_F1_r$ is irreducible if $l\nmid r$.

If $l\mid r$, then $L(1)$ is indecomposable and $[A_0^+]$ spans a submodule $\overline L(1)$ of $L(1)$. Let $\overline{L}(0)=L(1)/\overline{L}(1)$. Similarly, $[A_{r-1}^-]$ spans a submodule $\overline{L}(r-1)$. Let $\overline{L}(r)=L(r)/\overline{L}(r-1)$.

For $a\in[1,r-1]$, applying Theorem \ref{KeyMF} again yields
\begin{equation}\label{pence}
\aligned
(1)\quad&\sce_1[A_a^+]=0,\; \scf_1[A_{a}^+]=\ups^{-(r-1)}[\![r-a]\!]_q1_a+[A_{a-1}^\pm],\\
(2)\quad&\scf_1[A_{a-1}^-]=0,\; \sce_1[A_{a-1}^-]=\ups^{r-1}[\![a]\!]_{q^{-1}}1_a-[A_{a-1}^\pm],\\
(3)\quad&\sce_1[A_{a-1}^\pm]=\ups^{r-1}[\![a]\!]_{q^{-1}}[A_a^+],\;\sce_11_a=[A_a^+],\\
(4)\quad&\scf_1[A_{a-1}^\pm]=-\ups^{-(r-1)}[\![r-a]\!]_q[A_{a-1}^-],\;\scf_11_a=[A_{a-1}^-].
\endaligned
\end{equation}
Let 
$$L(a+1)=\text{span}\{[A_a^+], \scf_1[A_{a}^+]\}\text{ and }L(a)=\text{span}\{[A_{a-1}^-], \sce_1[A_{a-1}^-]\}.$$ 
If $l\nmid r$, we claim that $\sS_F1_a=L(a+1)\oplus L(a)$ is a direct sum of irreducible submodules. Indeed,
 $[\![a]\!]_{q^{-1}}$ and $[\![r-a]\!]_q$ cannot be both zero in this case. So $L(a+1)\cap L(a)=0$, forcing $\sS_F1_a=L(a+1)\oplus L(a)$ as vector spaces. Since, by \eqref{[E,F]},
\begin{equation}\label{trump}
\sce_1\scf_1[A_{a}^+]=(\sce_1\scf_1+\scf_1\sce_1)[A_{a}^+]=\frac{\sck_1\sck_2^{-1}-\sck_1^{-1}\sck_2}{\ups-\ups^{-1}}
[A_{a}^+]=\frac{\ups^r-\ups^{-r}}{\ups-\ups^{-1}}
[A_{a}^+],
\end{equation}
and $\frac{\ups^r-\ups^{-r}}{\ups-\ups^{-1}}\neq0$,  every nonzero element in $L(a+1)$ generates $L(a+1)$. Hence, $L(a+1)$ is an irreducible submodule. Likewise, $L(a)$  is a submodule. This proves that $\sS_F1_a$ is semisimple for all $a\in[1,r-1]$. Hence, $\sS_F$ is semisimple.

 Assume now $l\mid r$. Then, by \eqref{trump}, $\sce_1(\scf_1[A_{a}^+])=0$. On the other hand, $\scf_1^2=0$ implies % and, by \eqref{pence}(3),
 $\scf_1(\scf_1[A_{a}^+])=0.$
 Thus, $\scf_1[A_{a}^+]$ spans a submodule $\overline{L}(a)$ of $L(a+1)$. Similarly, $\sce_1[A_{a-1}^-]$ spans a submodule $\overline{L}(a)'(\cong \overline{L}(a))$ of $L(a)$. Moreover, (cf. \cite[Theorem 1]{mz})
 $$\overline{L}(a+1)\cong L(a+1)/\overline{L}(a),\qquad
 \overline{L}(a-1)\cong L(a)/\overline{L}(a)'.$$
Hence, $\overline{L}(a), 0\leq a\leq r,$ form a complete set of all irreducible $\sS_F$-modules. \end{proof}

\begin{remark} The classification of irreducible modules for $\sS_k(1|1,r)$ in the semisimple case is consistent with a classification given in \cite[Theorem 7.5]{DR}. 

\end{remark}

\begin{lemma}\label{ss2}With the same assumption on $l$ as in Lemma \ref{ss1},
the superalgebras $\sS_F(2|1, r)$ and $\sS_F(1|2, r)$ are not semisimple for all $r\geq l.$
\end{lemma}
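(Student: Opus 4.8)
The plan is to reduce the non-semisimplicity of $\sS_F(2|1,r)$ and $\sS_F(1|2,r)$ to the non-semisimplicity of a suitable $\sS_F(1|1,l)$-subquotient, exploiting the fact that $q$ is a primitive $l$-th root of unity and $r\geq l$. By the isomorphism $\sS_F(m|n,r)\cong\sS_F(n|m,r)$ of Lemma \ref{DR5.8}, it suffices to treat $\sS_F(2|1,r)$. First I would isolate an idempotent $1_\la$ for a cleverly chosen weight $\la\in\La(2|1,r)$ and consider the ``local'' algebra $1_\la\sS_F(2|1,r)1_\la$, or more usefully a chain of weights linked by the $\sce_h,\scf_h$ so that the relevant matrix coefficients from Theorem \ref{KeyMF} are forced to vanish in $F$.

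More concretely, I would choose a weight of the form $\la=(a,l-a,\,r-l)$ or similar with the third (odd) coordinate carrying the ``excess'' $r-l\geq 0$ and the first two (even) coordinates summing to $l$, and look at the indecomposable module $\sS_F1_\la$ (or a summand thereof), examining the action of $\sce_1,\scf_1,\sce_2,\scf_2$ via the explicit matrix formulas in Theorem \ref{KeyMF}(1)--(4) and the Gaussian-binomial coefficients $\overline{[\![a_{h,k}+1]\!]}_{\ups_h^2}$. When $l\mid$ (some row sum) the quantum integer $[\![l]\!]_q$ vanishes, exactly as in the proof of Lemma \ref{ss1}; I would use this to exhibit a short exact sequence $0\to N'\to N\to N''\to 0$ of $\sS_F(2|1,r)$-modules that does not split, by showing $\Ext^1(N'',N')\neq0$ — concretely, by producing an element $[A]$ with $\sce_h[A]$ or $\scf_h[A]$ landing in a proper submodule with a nonzero nilpotent-type coefficient while the ``return'' coefficient is zero, so that no complement can be $\sS_F$-stable. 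Equivalently, one can simply exhibit an explicit nonzero element of the Jacobson radical: find $[A]$ with $([A])^2$ or a product $[D^+_h][A]$ lying in the span of $[B]$'s whose further products all vanish, mirroring the $\overline{L}(a)$ construction of Lemma \ref{ss1}.

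An alternative, perhaps cleaner, route is a \emph{quotient-to-subquotient} argument: show that $\sS_F(1|1,l)$ (which is \emph{not} semisimple by Lemma \ref{ss1}(2), since $l\mid l$) arises as a quotient of a corner algebra $e\sS_F(2|1,r)e$ for a suitable idempotent $e=\sum 1_\la$ summed over weights $\la$ supported on two of the three coordinates with the third equal to $r-l$. If $e\sS_F(2|1,r)e$ has $\sS_F(1|1,l)$ as a homomorphic image (or direct summand), then it is not semisimple, and since semisimplicity passes to corner algebras $e\sS e$, neither is $\sS_F(2|1,r)$. The embedding $M(1|1,l)\hookrightarrow M(2|1,r)$ by $A\mapsto A\oplus\diag(r-l)$ on the odd block (or padding a zero even row/column appropriately), together with the multiplication formulas of Theorem \ref{KMF}, should realise this compatibly; the third coordinate being ``frozen'' makes $\sce_2,\scf_2$ act trivially on the relevant subquotient, leaving only the $\sS_F(1|1)$-type action of $\sce_1,\scf_1$.

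The main obstacle I anticipate is bookkeeping: unlike the $(1|1)$ case, $M(2|1,r)$ is much larger, so identifying the precise weight $\la$ and the precise $2$- or $3$-dimensional indecomposable (non-split) subquotient, and verifying that the ``splitting'' coefficients genuinely vanish in $F$ while the obstruction coefficient does not, requires care with the signs $(-1)^{\widehat A}$, the normalisation $\up^{-d(A)}$ in the $[A]$-basis, and the parameters $\ups_h,\bsq_h$ (which differ between the even index $h<m$ and the odd boundary). I would hedge by first doing the smallest case $r=l$ by hand — where $\la$ can be taken with a single ``active'' even pair — to pin down the non-split extension, and then observe that adding $r-l$ to the odd coordinate only tensors/shifts the picture without restoring semisimplicity, since the offending $[\![l]\!]_q=0$ identity is insensitive to that shift. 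The argument via passing to the corner algebra $e\sS_F e$ and quoting semisimplicity-descent is what I would ultimately write up, as it localises the combinatorial burden to reproving (a shifted copy of) Lemma \ref{ss1}(2).
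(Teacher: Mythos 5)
Your first route is the right idea and is essentially what the paper does, but the version you say you would ``ultimately write up'' --- realising $\sS_F(1|1,l)$ as a quotient or summand of a corner algebra $e\,\sS_F(2|1,r)\,e$ --- has a genuine gap. Idempotent truncation by a sum of $1_\la$ over weights with one coordinate frozen at $r-l$ does \emph{not} produce $\sS_F(1|1,l)$: the corner algebra $e\sS e$ is spanned by all $[A]$ whose row and column sums both lie in the chosen weight set, and such $A$ need not have the frozen coordinate sitting in an isolated diagonal block (e.g.\ with row and column sums both equal to $(r-l,\mu_2,\mu_3)$, the entries $a_{12},a_{13},a_{21},a_{31}$ are unconstrained beyond the marginals). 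The standard corner-algebra isomorphism --- the one the paper itself uses in Theorem \ref{thmqss} --- only deletes coordinates that are frozen at $0$, and in particular never changes the degree $r$; there is no idempotent truncation taking $\sS_F(2|1,r)$ to an algebra of degree $l<r$. So ``semisimplicity descends to $e\sS e$'' is true but does not connect to Lemma \ref{ss1}(2) the way you want, and the claimed surjection onto $\sS_F(1|1,l)$ is unproven and, as stated, false.

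For the first route, the two missing ingredients are precisely what make the paper's argument short. First, take $e=1_{(r,0,0)}$ rather than a weight with the excess in the odd slot: then $e\sS_Fe=Fe$ (the only $A$ with $\ro(A)=\co(A)=(r,0,0)$ is diagonal), so $\End_{\sS_F}(\sS_Fe)\cong F$ and $P=\sS_Fe$ is automatically indecomposable --- no $\Ext^1$ computation is needed; it suffices to exhibit one proper nonzero submodule. Second, the proper submodule is generated by an explicit ``maximal vector'' killed by all $\sce_h^{(p)}$, and its construction genuinely splits into two cases: if $l\nmid r+1$, write $r=al+b$ with $0\le b\le l-2$ and take $\scf_1^{(b+1)}e=[A_{al-1,b+1,0}]$ (this needs $b+1<l$); if $l\mid r+1$ that element is unavailable and one must instead take $\scf_2\scf_1^{(l)}e=[A_{r-l,l-1,1}]$, which uses the odd direction. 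Your write-up does not identify this dichotomy, and your proposed weight $(a,l-a,r-l)$ would force you to prove indecomposability of $\sS_F1_\la$ separately. The reduction from $\sS_F(1|2,r)$ to $\sS_F(2|1,r)$ via Lemma \ref{DR5.8} is correct and matches the paper.
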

\begin{proof} By Lemma \ref{DR5.8}, it suffices to consider $\sS_F=\sS_F(2|1,r)$.
Let $e=1_{(r,0,0)}$. Then, for $P=\sS_Fe$, End$_{\sS_F}(P)\cong F$ and so $P$ is an indecomposable $\sS_F$-module. We now show the existence of a proper submodule of $P$ if $r\geq l$.
Observe that $P$ is spanned by all $[A]$ with $\co(A)=(r,0,0)$. Such $A$ will be written as $A_{a,b,c}$ where $(a,b,c)^t$ is the first column of $A$. We have two cases to consider.

{\bf Case 1.} If $r=al+b$ with $0\leq b\leq l-2$ (i.e., $l\nmid r+1$), then $b+1<l$ and 
$\scf_1^{(b+1)}e=[A_{al-1,b+1,0}]\in P$. We now claim that $[A_{al-1,b+1,0}]$ is a maximal vector in 
the sense that $\sce_h^{(p)}[A_{al-1,b+1,0}]=0$ for all $h=1,2$ and $p\geq1$. This is clear if $h=2$ since all $a_{h+1,k}=a_{3,k}=0$. Also, by Theorem \ref{KeyMF}(1), we have $\sce_1^{(p)}[A_{al-1,b+1,0}]=0$ for $p>b+1$ and, for $p\leq b+1<l$, 
$$\sce_1^{(p)}[A_{al-1,b+1,0}]=\frac{\sce_1^{p-1}}{[p]_\ups^!}\sce_1[A_{al-1,b+1,0}]=\frac{\ups^{al-1}[\![al]\!]_{q^{-1}}}{[p]_\ups^!}\sce_1^{p-1}[A_{al,b,0}]=0.$$
By the claim, we see that $P':=\sS_F[A_{al-1,b+1,0}]=\sS_F^-[A_{al-1,b+1,0}]$ is a proper submodule of $P$ since $e\not\in P'$.

{\bf Case 2.} If $r=al-1$ (and so $a\geq2$), then by Theorem \ref{KeyMF},
$\scf_2(\scf_1^{(l)}e)=\scf_2[A_{r-l,l,0}]=[A_{r-l,l-1,1}]\in P.$
Now, since $r-l+1=(a-1)l$, we have $\sce_1[A_{r-l,l-1,1}]=\ups^{r-l}[\![r-l+1]\!]_{q^{-1}}[A_{r-l+1,l-2,1}]=0$ and
$\sce_2[A_{r-l,l-1,1}]=\ups^{l-1}[\![l]\!]_{q^{-1}}[A_{r-l,l,0}]=0$. Hence, $\sce_h^{(p)}[A_{r-l,l-1,1}]=0$ for all $h=1,2$ and $p<l$. Similarly, by Theorem \ref{KeyMF}(1),
$\sce_h^{(p)}[A_{r-l,l-1,1}]=0$ for $h=1,2$ and $p\geq l$. This proves that $\sS_F[A_{r-l,l-1,1}]=\sS^-_F[A_{r-l,l-1,1}]$ is a proper submodule of $P$. 

Combining the two cases, we conclude that 
$\sS_F$ is not semisimple whenever $r\geq l$.
\end{proof}
The following result is the quantum analogue of a result of F. Marko and A.N. Zubkov \cite{mz2}, which is stated in the abstract.
\begin{theorem}\label{thmqss}Let $F$ be a field containing elements $q\neq0,1$ and $\ups=\sqrt{q}$.
Then the  $q$-Schur superalgebra   $\sS_F(m|n, r)$ with $m,n\geq1$ is semisimple if and
only if one of the following holds:
\begin{itemize}
\item[(1)] $q$ is not a root of unity;

\item[(2)] $q$ is  a primitive $l$th root of unity and $r<l;$

\item[(3)] $m=n=1$ and $q$ is an $l$th root of unity with $l\nmid r.$
\end{itemize}
\end{theorem}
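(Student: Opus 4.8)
The plan is to prove the semisimplicity criterion for $\sS_F(m|n,r)$ by reducing everything to the small rank cases already established in Lemmas \ref{ss1} and \ref{ss2}, together with a general non-semisimplicity argument that propagates failure downward in $r$. First I would dispose of the ``if'' direction. In case (1), when $q$ is not a root of unity, and in case (2), when $q$ is a primitive $l$-th root of unity with $r<l$, the Gaussian polynomials $[\![k]\!]_{\bsq_h}$ appearing as structure constants in Theorem \ref{KMF} (equivalently the coefficients in Theorem \ref{KeyMF}) are all nonzero for the relevant range $k\leq r$; one then argues that the algebra is semisimple. The cleanest route is to invoke that $\sS_F(m|n,r)$ is a quotient (or is Morita equivalent to a corner) of a suitable object whose semisimplicity is governed by these quantum integers, or alternatively to use that under these hypotheses $\fT_F(m|n,r)$ is a semisimple $\sH_F$-module and $\End_{\sH_F}$ of a semisimple module is semisimple. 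Case (3), $m=n=1$ with $l\nmid r$, is exactly Lemma \ref{ss1}(1).

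For the ``only if'' direction I would argue the contrapositive: assume $q$ is a primitive $l$-th root of unity with $r\geq l$, and show $\sS_F(m|n,r)$ is not semisimple unless $m=n=1$ and $l\nmid r$. The key structural tool is idempotent truncation: for $\la\in\La(m|n,r)$, the element $1_\la=[\diag(\la)]$ is an idempotent, and for any ``sub-shape'' one gets truncated algebras of the form $e\sS_F(m|n,r)e$ which are again $q$-Schur superalgebras of smaller parameters (this is the standard Schur-algebra truncation, compatible with the basis $\{[A]\}$ via the multiplication formulas). Concretely: if $m\geq 2$ (or $n\geq2$), then picking a weight supported on $\{1,2\}$ even indices plus one odd index exhibits $\sS_F(2|1,r')$ or $\sS_F(1|2,r')$ as a truncation for some $r'$ with $l\leq r'\leq r$, and Lemma \ref{ss2} gives non-semisimplicity of that truncation, hence of $\sS_F(m|n,r)$ (a truncation $e\sA e$ of a semisimple algebra $\sA$ is semisimple). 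Similarly, if $m=n=1$ and $l\mid r$, Lemma \ref{ss1}(2) directly gives non-semisimplicity. The only remaining subtlety is ensuring that when $m,n\geq1$ with $\max(m,n)\geq2$ and $r\geq l$, one really can find an admissible weight realizing $\sS_F(2|1,r')$ or $\sS_F(1|2,r')$ with $r'\geq l$ as a truncation — this requires $r\geq l$ and a little bookkeeping with $\La(m|n,r)$, but is elementary; and when $m=n=1$ one uses Lemma \ref{ss1} as is.

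The main obstacle I expect is the ``if'' direction in cases (1) and (2): one must show that non-vanishing of the relevant quantum integers actually forces semisimplicity, not merely that the obvious obstructions vanish. The efficient argument is to show $\fT_F(m|n,r)=\bigoplus_\la \xy_\la\sH_F$ is a semisimple (projective) $\sH_F$-module under these hypotheses — because $\sH_F$ itself is semisimple when $q$ is not a root of unity, and when $q$ is a primitive $l$-th root with $r<l$ the relevant Hecke algebra $\sH_F(\fS_r)$ is still semisimple — and then use that the endomorphism algebra of a semisimple module over any ring is a semisimple algebra. This sidesteps any delicate direct computation inside $\sS_F(m|n,r)$. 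Assembling the pieces: the ``if'' part follows from semisimplicity of $\sH_F$ in cases (1)--(2) and from Lemma \ref{ss1}(1) in case (3); the ``only if'' part follows by idempotent truncation down to $\sS_F(1|1,r)$, $\sS_F(2|1,r)$ or $\sS_F(1|2,r)$ and Lemmas \ref{ss1}(2) and \ref{ss2}.
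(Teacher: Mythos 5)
Your proposal is correct and follows the same overall strategy as the paper: the ``if'' direction via semisimplicity of $\sH_F$ in cases (1)--(2) plus Lemma \ref{ss1} in case (3), and the ``only if'' direction by reducing to $m=n=1$ (Lemma \ref{ss1}(2)) or truncating by the idempotent $f=\sum_{\la}1_\la$ over weights supported on positions $1,2,m+1$ to realise $\sS_F(2|1,r)\cong f\sS_F(m|n,r)f$ (with $r'=r$; no smaller $r'$ is needed) and invoking Lemma \ref{ss2}, after using the flip $\sS_F(m|n,r)\cong\sS_F(n|m,r)$ of Lemma \ref{DR5.8} to cover $m=1$, $n\geq2$. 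The one genuine point of divergence is the final logical step of the ``only if'' part: you transfer non-semisimplicity from the corner to the whole algebra via the elementary fact that $e\sA e$ is semisimple whenever $\sA$ is, which uses only the \emph{statement} of Lemma \ref{ss2}; the paper instead re-enters the \emph{proof} of Lemma \ref{ss2} to produce the explicit module $f\sS_F(m|n,r)e$, shows it is indecomposable but not irreducible, and then applies the Schur-functor fact from Green (6.2g) to conclude that $\sS_F(m|n,r)e$ is not irreducible. Your route is shorter and avoids the citation to Green; the paper's route has the side benefit of exhibiting a concrete indecomposable non-irreducible projective module, which it reuses in the proof of Theorem \ref{thmiqs}. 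Both arguments are sound.
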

\begin{proof} The first two condition implies that $\sH_F$ is semisimple and so is $\sS_F$. The semisimplicity under (3) follows from Lemma \ref{ss1}. We now show that, if all three conditions fail, then $\sS_F$ is not semisimple. By Lemmas \ref{DR5.8}\&\ref{ss1}, it is suffices to look at the case for $m\geq2$ and $n\geq1$ and $l\leq r$. 

Consider the subset 
$$\La(m|n,r)' =\{\la\in\La(m|n,r)\mid \la^{(0)}=(\la_1,\la_2,0,\ldots,0),\la^{(1)}=(\la_{m+1},0,\ldots,0)\}$$ and let $f=\sum_{\la\in \La(m|n,r)' }1_\la$ and $e=1_{(r,0,\ldots,0)}$. Then $ef=e=fe$ and  it is clear that
there is an algebra isomorphism $\sS_F(2|1,r)\cong f\sS_F(m|n,r)f$. By identifying the two algebras under this isomorphism, we see that there is an $f\sS_F(m|n,r)f$-module isomorphism $\sS_F(2|1,r)1_{(r,0,0)}\cong f\sS_F(m|n,r)e$. This $f\sS_F(m|n,r)f$-module is indecomposable, but not irreducible, by Lemma \ref{ss2}.
  Since $\sS_F(m|n,r)e$ is indecomposable and its image
$f\sS_F(m|n,r)e$ under the ``Schur functor'' is indecomposable, but not irreducible, we conclude that $\sS_F(m|n,r)e$ is not irreducible (see \cite[(6.2g)]{Gr}). Hence, $\sS_F(m|n,r)$ is not semisimple.
\end{proof}

\begin{remark}Semisimple $q$-Schur algebras have been classified by K. Erdmann and D. Nakano
\cite[Theorem(A)]{EN}. By Corollary \ref{KMFcor}, we may also use this new approach to get their result; see Appendix A.
\end{remark}

\section{Infinitesimal and little $q$-Schur superalgebras}

We now give another application of the multiplication formulas. We first construct certain subsuperalgebras of the $q$-Schur superalgebra $\sS_R(m|n,r)$ over the commutative ring $R$ in which $q=\ups^2\neq1$ is a primitive $l$-th root of unity. (So $l\geq2$.) 

Let $\fks_R(m|n,r)$ be the $R$-submodule spanned by all $[A]$ with $A\in M(m|n,r)_l$, where
$$M(m|n,r)_l=\{(a_{i,j})\in M(m|n,r)\mid a_{i,j}<l\;\forall i\neq j\}.$$
We have the following super analogue of the infinitesimal $q$-Schur algebras (cf. \cite{CGW}).

\begin{theorem}\label{6.1} The $R$-submodule $\mathfrak{s}_R(m|n,r)$ is a subsuperalgebra generated by $\sce_h,\scf_h,1_\la$ for all $1\leq h<m+n$, $\la\in\La(m|n,r)$.
\end{theorem}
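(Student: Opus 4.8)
The plan is to verify the two closure properties separately: first that $\fks_R(m|n,r)$ is a subsuperalgebra, and second that it is generated by the listed elements. For the first part, I would show that $\fks_R(m|n,r)$ is closed under multiplication by reducing to products of basis elements $[A][B]$ with $A,B\in M(m|n,r)_l$. The key observation is that it suffices to multiply by the algebra generators $\sce_h^{(p)},\scf_h^{(p)},\sck_i,1_\la$ of $\sS_R(m|n,r)$ from Theorem~\ref{KeyMF}, and in fact — since $q$ is a primitive $l$-th root of unity — the Gaussian binomial coefficients $\overline{\left[\!\!\left[a_{h,k}+\nu_k\atop\nu_k\right]\!\!\right]}_{\ups_h^2}$ vanish unless both $a_{h,k}+\nu_k<l$ and $\nu_k<l$ in the relevant summand (or the entry lies on the diagonal, where no constraint applies). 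So the only surviving terms $[A\pm\sum_l\nu_l(E_{h,l}-E_{h+1,l})]$ in formulas (1)--(4) of Theorem~\ref{KeyMF} already have all off-diagonal entries $<l$; that is, they lie in $M(m|n,r)_l$. This shows $\sce_h^{(p)}\cdot\fks_R$, $\scf_h^{(p)}\cdot\fks_R$, $\sck_i\cdot\fks_R$, $1_\la\cdot\fks_R$ are all contained in $\fks_R$.

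From that local statement I would bootstrap to full closure as follows. The elements $[A]$ with $A\in M(m|n,r)_l$ that are \emph{reachable} from the various $1_\la$ by successive application of $\sce_h^{(p)},\scf_h^{(p)}$ clearly span a left ideal's worth of vectors closed under the generators; the content is that every $[A]$ with $A\in M(m|n,r)_l$ is so reachable, and moreover reachable \emph{within} $\fks_R$ — this is exactly the generation statement, so the two halves of the theorem are really proved together. Concretely, given $A\in M(m|n,r)_l$ with $\ro(A)=\la$, I would strip $A$ down to its diagonal $\diag(\la')$ (for a suitable $\la'$) by repeatedly applying lowering/raising operators $\sce_h,\scf_h$ one box at a time, using formulas (1)--(4) (or Corollary~\ref{KyMF}) in the single-box case $p=1$: each application changes one pair of entries $a_{h,k},a_{h+1,k}$ by $1$, the leading coefficient is a unit in $F$ (a nonzero Gaussian integer times a power of $\ups$, provided we stay in range $a_{h,k}+1\le l$, which we do because we only ever build up entries that were already $<l$), and the error terms involve matrices that are ``smaller'' in an evident partial order on $M(m|n,r)_l$ — say, ordered by $\sum_{i<j}a_{i,j}$ or a finer dominance-type order on columns. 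An induction on this order then expresses every $[A]$, $A\in M(m|n,r)_l$, as a polynomial in $\sce_h,\scf_h,1_\mu$, which simultaneously proves (a) these elements generate $\fks_R$ as an algebra and (b) $\fks_R$ is closed under multiplication, hence a subsuperalgebra. (The $\mathbb Z_2$-grading is inherited from $\sS_R(m|n,r)$ since each $[A]$ is homogeneous, so ``subsuperalgebra'' is automatic once we have a subalgebra spanned by homogeneous elements.)

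The main obstacle I anticipate is bookkeeping the induction in the second paragraph carefully: one must choose the partial order on $M(m|n,r)_l$ so that the error terms produced by $\sce_h[A]$ and $\scf_h[A]$ are genuinely strictly smaller, \emph{and} so that the reduction terminates at a diagonal matrix, \emph{and} so that the intermediate matrices never acquire an off-diagonal entry $\geq l$ (which would take us outside $M(m|n,r)_l$ and break the argument). The vanishing of Gaussian binomials at roots of unity is what keeps us inside $M(m|n,r)_l$ on the way \emph{down} (closure), but on the way \emph{up} (generation) one needs the separate observation that building a box only in a position where the current entry is $\le l-2$ keeps the coefficient $\overline{[\![a_{h,k}+1]\!]}_{\ups_h^2}$ nonzero; combining these two directions consistently is the delicate point. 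A clean way to package this is to fix a total order refining $A\mapsto\sum_{i<j}a_{i,j}$ and, within a fixed value, using reverse-lexicographic order on the columns, and to prove by induction that $[A]\in\langle\sce_h,\scf_h,1_\mu\rangle$ for all $A\in M(m|n,r)_l$; the base case is $A$ diagonal, where $[A]=1_{\ro(A)}$. Everything else is the routine computation with Theorem~\ref{KeyMF} that the proposal above already isolates.
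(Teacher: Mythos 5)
Your overall architecture is the paper's: closure comes from the vanishing of Gaussian binomials at an $l$-th root of unity, and generation comes from an inductive "leading term plus lower terms" argument; the two halves are indeed proved together. But there are two problems. First, your opening claim that $\sce_h^{(p)}\cdot\fks_R\subseteq\fks_R$ for \emph{all} $p$ is false, and the vanishing argument you give does not cover the column $k=h+1$. There the constraint is $\nu_{h+1}\le a_{h+1,h+1}$, a \emph{diagonal} entry, so $\nu_{h+1}$ can be $\ge l$; the coefficient $\overline{\left[\!\!\left[a_{h,h+1}+\nu_{h+1}\atop \nu_{h+1}\right]\!\!\right]}$ is then nonzero by the $q$-Lucas theorem whenever $a_{h,h+1}+(\nu_{h+1}\bmod l)<l$, while the target off-diagonal entry $a_{h,h+1}+\nu_{h+1}$ is $\ge l$. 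Concretely, $\sce_h^{(l)}1_\la=[lE_{h,h+1}+D]\notin\fks_R$, so $\fks_R$ is not a left ideal of $\sS_R(m|n,r)$ and "it suffices to multiply by the algebra generators of $\sS_R(m|n,r)$" is the wrong reduction. What is true, and what the paper uses, is that the operators $\sce_h^{(a)}1_\mu,\scf_h^{(b)}1_\mu$ with $a,b<l$ (equivalently $[aE_{h,h+1}+D]$, $[bE_{h+1,h}+D']$ with these matrices in $M(m|n,r)_l$) preserve $\fks_R$; closure of $\fks_R$ under multiplication then follows only after one knows that every $[A]$, $A\in M(m|n,r)_l$, is a polynomial in these restricted operators.

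Second, that generation step is the actual content of the theorem and you leave it as "routine", while correctly flagging it as the delicate point: you must produce an order in which the error terms of your one-box moves are strictly smaller, the reduction terminates at a diagonal matrix, and no intermediate off-diagonal entry reaches $l$. The naive orders you suggest (e.g.\ $\sum_{i<j}a_{i,j}$) do not obviously work, because $\sce_h[B]$ scatters over all columns $k$ with $b_{h+1,k}\ge1$ and the effect of $E_{h,k}-E_{h+1,k}$ on such a statistic depends on the position of $k$ relative to $h$. The paper sidesteps all of this by invoking the triangular relation \cite[Theorem 7.4]{DG},
\begin{equation*}
\prod_{i\leq h<j}^{(\leq_2)}[a_{j,i}E_{h+1,h}+D_{i,h,j}]\ \prod_{i\leq h<j}^{(\leq_1)}[a_{i,j}E_{h,h+1}+D_{i,h,j}]=(-1)^{\ol{A}}[A]+\text{lower terms (Bruhat order)},
\end{equation*}
whose factors all have $a_{i,j},a_{j,i}<l$ when $A\in M(m|n,r)_l$ and hence lie in the proposed generating set; induction on the Bruhat order then gives $[A]\in\fks'_R$. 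If you want a self-contained proof you would essentially have to reprove this relation, including the precise order of the factors, rather than assert it. (A further small point you share with the paper: passing from $\sce_h^{(a)}$ to powers of $\sce_h$ requires $[\![a]\!]^!$ to be invertible in $R$ for $a<l$, which holds over a field but needs the appropriate meaning of "primitive $l$-th root of unity" over a general commutative ring.)
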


\begin{proof}Let $\fks'_R(m|n,r)$ be the subalgebra generated by $[aE_{h,h+1}+D]$ and $[bE_{h+1,h}+D']$, where $D,D'$ are diagonal matrices with  $aE_{h,h+1}+D,bE_{h+1,h}+D'\in M(m|n,r)_l$ and $0\leq a,b<l$.
Observe from the multiplication formulas in Theorem \ref{KeyMF} that if $A\in M(m|n,r)_l$ then  $\sce_h^{(a)}[A]=[aE_{h,h+1}+D][A]$ and $\scf_h^{(b)}[A]=[bE_{h+1,h}+D'][A]$, for some $D,D'$,  are linear combinations of $[B]$ with $B\in M(m|n,r)_l$. This implies that
 $\fks'_R(m|n,r)\subseteq \fks_R(m|n,r)$. Now, by the triangular relation \cite[Theorem 7.4]{DG}:
 \begin{equation}\label{tri}
 \prod_{i\leq h<j}^{(\leq_2)}[a_{j,i}E_{h+1,h}+D_{i,h,j}]\prod_{i\leq h<j}^{(\leq_1)}[a_{i,j}E_{h,h+1}+D_{i,h,j}]=(-1)^{\ol{A}}[A]+\text{lower terms},
 \end{equation}
  an inductive argument on the Bruhat order on $M(m|n,r)$ shows that every $[A]$ with $A\in M(m|n,r)_l$ belongs to $\fks'_R(m|n,r)$. Hence, 
 $\fks_R(m|n,r)= \fks'_R(m|n,r)$ is a subalgebra and, hence, a subsuperalgebra. From the argument above, we see easily that $\sce_h,\scf_h,1_\la$ can be generators.
\end{proof}

%The subalgebra $\fks_R(m|n,r)$ inherits the superalgebra structure form $\mathcal S_R(m|n,r)$. We will call it an {\it  infinitesimal $q$-Schur superalgebra}.

\begin{remarks} By \cite[Corollary 8.4]{DGW}, $\fks_R(m|n,r)$  is isomorphic to the infinitesimal $q$-Schur superalgebra defined
in \cite[\S3]{CGW} by using quantum coordinate superalgebra. 

%(2) Infinitesimal $q$-Schur superlagebra $\fks_F(m|n,r)_d$.
\end{remarks}

We now construct a subsuperalgebra $\fku_R(m|n, r)$.
Let $\mathbb Z_l:=\mathbb Z/l\mathbb Z$ and let $\bar\;:\mathbb Z\to \mathbb Z_l$ be the quotient map. Extend this map to $M(m|n,r)$, $\Lambda(m|n,r)$ by baring on the entries. Thus, we may identify the image $\overline{M(m|n,r)}$ with the following set:
$$\overline{M(m|n,r)}=\{A^\pm+\diag(\overline{\partial}_A)\mid A\in M(m|n,r)_l\}=\overline{M(m|n,r)_l}.$$
where $A^\pm$ is obtained by replacing the diagonal of $A$ with 0's and $\partial_A \in \mathbb Z^{m+n}$ is the diagonal of $A$ (i.e., $A=A^\pm+\diag(\partial_A)$). For $A=A^\pm+\diag(\overline{\partial}_A)\in \overline{M(m|n,r)}$, define
$$
\ol{\xi}_A=\sum_{\lambda\in\Lambda(m|n,r-|A^\pm|)\atop \ol\lambda=\overline{\partial}_A}[A^\pm+\diag(\lambda)]=
\sum_{\lambda\in\Lambda(m|n,r-|A^\pm|)\atop \ol\lambda=\overline{\partial}_A}\xi_{A^\pm+\diag(\lambda)},$$
and let $\ol{1}_\la=\ol{\xi}_{\diag(\la)}$. Note that every $\ol{\xi}_A$ is a homogeneous element with respect the super structure on $\sS_R(m|n,r)$. 

We now have the super analogue of the {\it little} $q$-Schur algebra introduced in \cite{DFW}.

\begin{corollary}\label{little}The subsuperspace $\fku_R(m|n,r)$ of $\fks_R(m|n,r)$ spanned by $\ol{\xi}_A$ for all $A\in \overline{M(m|n,r)}$ is a subsuperalgebra with identity $\sum_{x\in\ol{\La(m|n,r)}}\ol{1}_{\diag(x)}$ and generated by $\sce_h,\scf_h,\ol1_{\la}$ for all $1\leq h<m+n,\la \in\ol{\La(m|n,r)}$.
\end{corollary}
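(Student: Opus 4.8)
The plan is to mimic the proof of Theorem \ref{6.1}, transferring the three facts established there --- closure under the generators, a triangular spanning relation, and generation by the Chevalley-type elements --- from $\fks_R(m|n,r)$ to its ``quotient-graded'' subspace $\fku_R(m|n,r)$. First I would record the basic structural observation that $\fku_R(m|n,r)$ is exactly the span of the ``column sums over a congruence class'' elements $\ol\xi_A$, and that these are linearly independent (being sums over disjoint subsets of the PBW-type basis $\{[A]\}$ of $\fks_R(m|n,r)$). The identity element claim is a direct computation: $\sum_{x\in\ol{\La(m|n,r)}}\ol1_{\diag(x)}=\sum_{\la\in\La(m|n,r)}1_\la$, which is the identity of $\sS_R(m|n,r)$, and it lies in $\fku_R(m|n,r)$ by construction; moreover each $\ol1_{\diag(x)}$ is idempotent and they are pairwise orthogonal, so $\fku_R(m|n,r)=\bigoplus_x\ol1_{\diag(x)}\,\fku_R(m|n,r)$.

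Next I would verify that $\fku_R(m|n,r)$ is closed under left multiplication by $\sce_h$ and $\scf_h$. The key point is that the multiplication formulas of Theorem \ref{KeyMF}(3),(4) (and, after summing over the relevant $\nu$, parts (1),(2)) applied to $[A^\pm+\diag(\la)]$ only change the diagonal entries by amounts that are \emph{independent of $\la$} --- a fixed transfer of one unit between rows $h$ and $h{+}1$ --- while the scalar coefficients $\ups_h^{f^+_{h,k}(A)}\ol{[\![a_{h,k}{+}1]\!]}_{\ups_h^2}$ depend only on the \emph{off-diagonal} entries $a_{i,j}$ ($i\neq j$), which are the same for every term in the sum defining $\ol\xi_A$. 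Hence $\sce_h\,\ol\xi_A$ is again a ($\mathbb Z_l$-congruence-class) sum of basis elements with common off-diagonal part, i.e. an $R$-linear combination of elements $\ol\xi_B$; likewise for $\scf_h$. (One must be slightly careful that summing $[A^\pm+\diag(\la)]$ over all $\la$ with $\ol\la=\overline\partial_A$ and $|\la|=r-|A^\pm|$ does produce, after applying $\sce_h$, a sum again of this shape for the new off-diagonal matrix; this is where one uses that the index shift $\la\mapsto\la-\sum_l\nu_l(\bse_h-\bse_{h+1})$ is a bijection between the two congruence classes of compositions.) Since $\sce_h\,\ol1_\mu$ and $\scf_h\,\ol1_\mu$ are themselves of the form $\pm\ol\xi_{D^\pm_h}$, the subalgebra $\fku'_R(m|n,r)$ generated by the $\sce_h,\scf_h,\ol1_\la$ is contained in $\fku_R(m|n,r)$.

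For the reverse inclusion I would ``bar'' the triangular relation \eqref{tri}: summing \eqref{tri} over all diagonal completions $D_{i,h,j}$ in a fixed congruence class and with fixed total, and noting that the product on the left becomes a product of generators $\ol\xi_{a_{j,i}E_{h+1,h}+\cdots}$, $\ol\xi_{a_{i,j}E_{h,h+1}+\cdots}$ of $\fku'_R(m|n,r)$, one gets $(-1)^{\ol A}\ol\xi_A+(\text{lower terms in the Bruhat order on }\overline{M(m|n,r)})$. An induction on the Bruhat order then shows every $\ol\xi_A\in\fku'_R(m|n,r)$, giving $\fku_R(m|n,r)=\fku'_R(m|n,r)$, a subsuperalgebra (it is $\mathbb Z_2$-graded because each $\ol\xi_A$ is homogeneous, as already noted), generated as claimed. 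The main obstacle is the bookkeeping in the closure step: one has to check that applying a multiplication formula to $\ol\xi_A$ genuinely regroups into complete congruence-class sums $\ol\xi_B$ rather than partial ones --- equivalently, that the Gaussian-binomial coefficients and sign factors appearing are constant along each fibre of the map $\La(m|n,r)\to\ol{\La(m|n,r)}$ --- and that the diagonal-shift reindexing respects the $l$-adic constraints; once this is pinned down, the triangularity induction is routine given \eqref{tri}.
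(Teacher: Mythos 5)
Your proposal takes essentially the same route as the paper's proof: closure of the span under $\sce_h,\scf_h,\ol1_\la$ via the multiplication formulas, the reverse inclusion by summing the triangular relations \eqref{tri} over each congruence class and inducting on the Bruhat order, and the generation claim from $\ol{\xi}_{aE_{h,h+1}+D}=\pm\,\sce_h^{(a)}\ol1_\la$ with $a<l$. The one imprecision is the parenthetical assertion that the coefficients in Theorem \ref{KeyMF} depend only on the off-diagonal entries of $A$ --- the exponents $f^{\pm}_h(\nu,A)$ and the factors $\overline{\left[\!\!\left[a_{h,h}+\nu_h\atop \nu_h\right]\!\!\right]}$ do involve the diagonal --- but the regrouping survives for precisely the reason you flag at the end: along a fibre of $\La(m|n,r)\to\ol{\La(m|n,r)}$ the diagonal entries move only by multiples of $l$, under which these Gaussian binomials (with $\nu<l$) and the relevant $\ups_h$-powers are unchanged, which is exactly the check the paper leaves implicit in its appeal to ``a proof similar to that for Theorem \ref{6.1}''.
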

\begin{proof} In this case, with a proof similar to that for Theorem \ref{6.1}, we see that $\fku_R(m|n,r)$ is the subalgebra generated by $\ol{\xi}_{aE_{h,h+1}+D}$ and $\ol{\xi}_{bE_{h+1,h}+D'}$, where $D,D'$ are diagonal matrices with  $aE_{h,h+1}+D,bE_{h+1,h}+D'\in \ol{M(m|n,r)}$.  Note that by taking the sum of the triangular relations \eqref{tri} for every  $A^\pm+\diag(\lambda)$ with $\overline{\la}=\overline{\partial}_A$, we obtain the required triangular relation for $\ol{\xi}_A$'s
(cf. the proof of \cite[Theorem 8.1]{DG}).
The last assertion is clear as every $\ol{\xi}_{aE_{h,h+1}+D}$ or $\ol{\xi}_{bE_{h+1,h}+D'}$ has the form $\sce_h^{(a)}\bar1_\la$ or $\scf_h^{(b)}\bar1_\la$.
\end{proof}

We end the paper with the following semisimplicity criteria for the infinitesimal/little $q$-Schur superalgebras;
compare the nonsuper case \cite[\S7]{DFW2} and \cite{Fu}.

%We are ready to determine semisimple infinitesimal $q$-Schur superalgebras. 
\begin{theorem}\label{thmiqs} The superalgebra   $\fks_F(m|n, r)$ or $\fku_F(m|n, r)$ with $m,n\geq1$ is semisimple
if and only if one of the following holds:
\begin{itemize}
\item[(1)]  $r<l;$

\item[(2)] $m=n=1,l\nmid r.$
\end{itemize}
\end{theorem}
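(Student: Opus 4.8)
The plan is to mirror the structure of the proof of Theorem \ref{thmqss}, using the already-established semisimplicity dictionary between $q$-Schur superalgebras and their infinitesimal/little analogues. First I would dispose of the ``if'' direction. If $r<l$, then $M(m|n,r)_l=M(m|n,r)$, so $\fks_F(m|n,r)=\sS_F(m|n,r)$, which is semisimple by Theorem \ref{thmqss}(2); and $\fku_F(m|n,r)$ is then a (unital) subalgebra of a semisimple algebra obtained by the idempotent-truncation-plus-averaging construction of Corollary \ref{little}, so I would check directly that the argument of Lemma \ref{ss1} (the $\sce_1\scf_1$-computation via \eqref{[E,F]}, which still makes sense inside $\fku_F$) shows each $\fku_F1_x$ decomposes as a sum of irreducibles. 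For case (2), $m=n=1$: here $\fks_F(1|1,r)$ and $\fku_F(1|1,r)$ are subalgebras of $\sS_F(1|1,r)$, and when $l\nmid r$ the full algebra $\sS_F(1|1,r)$ is already semisimple by Lemma \ref{ss1}(1), so I need only argue that $\fks_F$ and $\fku_F$ are unital subalgebras (true, with identity recorded in Theorem \ref{6.1} resp.\ Corollary \ref{little}) and hence semisimple — a subalgebra of a semisimple algebra sharing the same identity need not be semisimple in general, so here I would instead exhibit the decomposition of $\fks_F(1|1,r)$ and $\fku_F(1|1,r)$ into matrix blocks explicitly by restricting the module list $\{L(a),L(a+1)\}$ from Lemma \ref{ss1}, noting all these modules already lie over the subalgebra generated by $\sce_1,\scf_1,1_\la$.

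For the ``only if'' direction I would prove the contrapositive: assuming neither (1) nor (2) holds, so $r\geq l$ and either $m,n$ are not both $1$ (i.e.\ $m\geq2$ or $n\geq2$) or ($m=n=1$ and $l\mid r$), I must produce a non-semisimple subquotient. The case $m=n=1$, $l\mid r$, $r\geq l$ is handled exactly as in Lemma \ref{ss1}(2): the indecomposable non-simple module $L(1)=\fks_F1_0$ (all the matrices $A_0^+$, $1_0$ have off-diagonal entries $\leq1<l$, so they survive in $\fks_F$ and in $\fku_F$), so $\fks_F(1|1,r)$ and $\fku_F(1|1,r)$ are not semisimple. For the remaining case, by the isomorphism $\sS_F(m|n,r)\cong\sS_F(n|m,r)$ of Lemma \ref{DR5.8} — which I would first check restricts to an isomorphism $\fks_F(m|n,r)\cong\fks_F(n|m,r)$ and $\fku_F(m|n,r)\cong\fku_F(n|m,r)$, since the involution $\varphi$ and the map $\jmath$ respect the condition $a_{i,j}<l$ — it suffices to treat $m\geq2$, $n\geq1$. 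Here I would reuse the two explicit computations in the proof of Lemma \ref{ss2}: with $e=1_{(r,0,0)}$ (now inside the relevant infinitesimal/little algebra after truncating by $f=\sum_{\la\in\La(m|n,r)'}1_\la$), the maximal vectors $[A_{al-1,b+1,0}]$ (Case 1, $l\nmid r+1$) and $[A_{r-l,l-1,1}]$ (Case 2, $r=al-1$) both have all off-diagonal entries $<l$, hence lie in $M(m|n,r)_l$, and the submodule computations $\sce_h^{(p)}[\,\cdot\,]=0$ go through verbatim since they only use Theorem \ref{KeyMF}. This exhibits $P=\fks_F\cdot e$ (resp.\ its image under the Schur functor, or the $\fku_F$-analogue) as indecomposable but not irreducible.

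The one genuine subtlety, which I would address carefully, is the passage from ``$\sS_F(m|n,r)e$ has a proper submodule'' to ``$\fks_F(m|n,r)e$ (resp.\ $\fku_F$) is not semisimple.'' For $\fks_F$ this is immediate because $\fks_F$ contains $e$, $\sce_h$, $\scf_h$ and the maximal vectors, so the submodule $\fks_F^-[A_{al-1,b+1,0}]$ (resp.\ $\fks_F^-[A_{r-l,l-1,1}]$) is literally a proper $\fks_F$-submodule of $\fks_F e$, and $\operatorname{End}_{\fks_F}(\fks_F e)\cong \operatorname{End}_{\sS_F}(\sS_F e)\cong F$ still forces indecomposability; the argument via \cite[(6.2g)]{Gr} (the Schur functor) is then not even needed. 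For $\fku_F$ the same works after replacing $e$, $1_\la$ by the $\bar1_\la$-versions and noting that $\ol\xi_{aE_{h,h+1}+D}=\sce_h^{(a)}\bar 1_\la$ generate; I would double-check that the maximal vectors, being single basis elements $[A]$ with $A$ having the appropriate diagonal, actually appear among the $\ol\xi_B$ when one sums over the congruence class — they do, because each such $A$ is the unique element of its class lying over the relevant partition (the first-column diagonal is pinned down by $\co(A)=(r,0,\dots)$ together with the off-diagonal $1$'s). The main obstacle is thus entirely bookkeeping: verifying that every module-theoretic witness used in Lemmas \ref{ss1}, \ref{ss2} and Theorem \ref{thmqss} is built from matrices with off-diagonal entries $<l$ and from the generators $\sce_h,\scf_h,1_\la$ (resp.\ $\bar1_\la$), so that the arguments descend unchanged to $\fks_F$ and $\fku_F$; I expect no new ideas beyond this descent.
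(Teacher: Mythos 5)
Your treatment of the infinitesimal algebra $\fks_F(m|n,r)$ essentially matches the paper's: the witnesses from Lemmas \ref{ss1} and \ref{ss2} all have off-diagonal entries $<l$, so they survive in $\fks_F$, and the descent really is bookkeeping. The genuine gap is in your non-semisimplicity argument for the little algebra $\fku_F(m|n,r)$ when $r\geq l$ and one of $m,n$ exceeds $1$. You propose to replace $e=1_{(r,0,0)}$ by $\ol 1_{(r,0,0)}$ and to locate the maximal vectors among the $\ol{\xi}_B$, asserting that each witness $[A]$ is ``the unique element of its class''. That is false as soon as $r\geq 2l$: for $A=A_{al-1,b+1,0}$ the class consists of all $A^\pm+\diag(\la)$ with $\ol\la=\ol{(al-1,0,0)}$ and $|\la|=al-1$, and there are several such $\la$ (e.g.\ $(al-1,0,0)$ and $(al-l-1,l,0)$), so $\ol{\xi}_A$ is a nontrivial sum and the single-term computations of Lemma \ref{ss2} do not apply to it. Moreover, indecomposability of $\fku_F\ol 1_{(r,0,0)}$ is not automatic: $\End_{\fku_F}(\fku_F\ol 1_{(r,0,0)})\cong \ol 1_{(r,0,0)}\fku_F\ol 1_{(r,0,0)}$ need not be one-dimensional. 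The paper sidesteps both problems with a different device: it keeps the $\fks_F(2|1,r)$-module $M=\fks_F(2|1,r)e$ and shows that its \emph{restriction} to $\fku_F(2|1,r)$ remains indecomposable, because $\dim 1_\la M\leq 1$ for every $\la$ forces any $\fku_F$-direct summand to be a sum of weight spaces $1_\la M$, hence an $\fks_F$-submodule, contradicting indecomposability over $\fks_F$. You need this (or an equivalent) step; without it the little case is not proved.

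Two smaller points. In the ``if'' direction for $\fku_F$ with $r<l$ you appeal to ``the argument of Lemma \ref{ss1}'' for general $m,n$, but that lemma only treats $(1|1)$; the correct and much simpler observation is that for $r<l$ the baring map is injective on $\La(m|n,r)$, so every congruence class is a singleton, $\ol{\xi}_A=[A]$, and $\fku_F(m|n,r)=\fks_F(m|n,r)=\sS_F(m|n,r)$. Your handling of case (2) ($m=n=1$, $l\nmid r$) — restricting the simples $L(a)$ to the subalgebra generated by $\sce_1,\scf_1,\ol 1_\la$ and deducing semisimplicity of $\fku_F$ as a $\fku_F$-submodule of the semisimple $\fku_F$-module $\fks_F(1|1,r)$ — is exactly the paper's argument and is fine.
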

\begin{proof}We first look at the ``infinitesimal'' case. 
We observe that, if $r<l$ or $m=n=1$, then $\fks_F(m|n, r)=\sS_F(m|n,r)$. The ``if'' part is clear. Conversely, suppose $\fks_F(m|n, r)$ is semisimple. Since $\fks_F(1|1, r)=\sS_F(1|1,r)$, its semisimplicity forces $l\nmid r$. Assume $m\geq 2, n\geq1$ and $l\leq r$. By the proof of Lemma \ref{ss2},
we see that $\fks_F(2|1, r)e$ ($e=1_{(r,0,0)}$) is indecomposable and contains the proper submodule 
$\fks_F(2|1, r)[A_{al,b,0}]$ if $l\nmid r+1$, or  $\fks_F(2|1, r)[A_{r-l,l-1,1}]$ if $l\mid r+1$. Hence, we can use the Schur functor argument to conclude $\fks_F(m|n, r)$ is not semisimple unless $r<l$.

We now look at the ``little'' case. If $r<l$, then $\fku_F(m|n, r)=\sS_F(m|n,r)$ is semisimple. If $m=n=1$ and $l\nmid r$, then the simple module $L(a)$ constructed in the proof of Lemma \ref{ss1} remains irreducible when restricted to $\fku_F(m|n, r)$. This is seen from the last assertion of Corollary \ref{little}.
Thus, $\fks_F(m|n, r)$ as an $\fku_F(m|n, r)$-module is semisimple. As a $\fku_F(m|n, r)$-submodule of $\fks_F(m|n, r)$, $\fku_F(m|n, r)$ is semisimple. Conversely, if condition (1) and (2) both fail. Then $r\geq l$. If one of the $m$ and $n$ is great than 1, then $\fku_F(m|n,r)$ is not semisimple. To see this, it is enough to show that $M=\fks_F(2|1, r)e$ as an $\fku_F(2|1, r)$-module is indecomposable. Indeed, suppose $M=M_1\oplus M_2$ where $M_i$ are nonzero $\fku_F(2|1, r)$-submodules. Then, for any $\la\in\La(m|n,r)$,  $1_\la M_1$ and $1_\la M_2$ cannot be both non-zero since $\dim1_\la M=1$. This shows that $M_i$ is a direct sum of some $1_\la M$. Hence, $M_i$ is an $\fks_F(2|1, r)$-module, contrary to the fact that $M$ is an indecomposable $\fks_F(2|1,r)$-module. If $m=n=1$, then $l\mid r$. In this case, $\fku_F(1|1, r)$ is clearly non-semsimple as $\fku_F(1|1, r)\ol1_0$ is indecomposable, but not irreducible.
\end{proof}

\begin{appendix}
\section{A Theorem of Erdmann--Nakano}
% \cite[1.3Theorem(A)]{EN} follows from Theorem \ref{thmqss}.

\begin{theorem}[{\cite[Theorem(A)]{EN}}]\label{semqsch}
Let $F$ be a field of characteristic $p\geq0$ containing elements $q\neq0,1$ and
$\ups=\sqrt{q}$. Then the  $q$-Schur algebra   $\sS_F(m, r)$  is
semisimple if and only if one of the following holds:
\begin{itemize}
\item[(1)] $q$ is not a root of unity;

\item[(2)] $q$ is  a primitive $l$th root of unity and $r<l;$

\item[(3)] $m=2, p=0, l=2$ and $r$ is odd;
\item[(4)] $m=2, p\geq 3, l=2$ and $r$ is odd with $r<2p+1.$
\end{itemize}
\end{theorem}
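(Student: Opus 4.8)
The plan is to prove Theorem \ref{semqsch} (the Erdmann--Nakano classification of semisimple $q$-Schur algebras) using the multiplication formulas established earlier, exactly in the spirit of the proof of Theorem \ref{thmqss}. First I would record, via Corollary \ref{KMFcor}, that the matrix representation of $\mathcal S_F(m,r)=\mathcal S_F(m|0,r)$ relative to the basis $\{[A]\}$ is given by the $n=0$ specialisation of Theorem \ref{KeyMF}, so that all the module-theoretic computations used below take place inside a purely combinatorial framework. The ``if'' direction is easy for (1) and (2): these are precisely the conditions under which the Hecke algebra $\sH_F$ is semisimple, whence $\mathcal S_F(m,r)=\End_{\sH_F}(\fT_F(m|0,r))$ is semisimple as well. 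For (3) and (4), with $m=2$, I would exhibit semisimplicity by a direct analysis of $\mathcal S_F(2,r)$ along the lines of Lemma \ref{ss1}: decompose $\mathcal S_F(2,r)=\bigoplus_a \mathcal S_F(2,r)1_{(a,r-a)}$, compute the action of $\sce_1,\scf_1$ on each summand using Theorem \ref{KeyMF}(1)(2), and check, using the hypotheses on $p$, $l$ and the parity/size of $r$, that the relevant Gaussian binomial coefficients $\overline{[\![a]\!]}$, $\overline{[\![r-a]\!]}$ are never simultaneously zero, so that each $1_{(a,r-a)}\mathcal S_F$ is a direct sum of irreducibles via the $[E,F]$-relation \eqref{[E,F]}.

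For the ``only if'' direction, I would argue contrapositively: assuming none of (1)--(4) holds, produce an indecomposable non-irreducible projective module. The key structural tool is the Schur-functor reduction used in the proof of Theorem \ref{thmqss}: for $m\geq 2$ there is an idempotent $f\in\mathcal S_F(m,r)$ with $f\mathcal S_F(m,r)f\cong\mathcal S_F(2,r)$ (take $f=\sum 1_\la$ over $\la$ supported on the first two coordinates) and $e=1_{(r,0,\ldots,0)}$ with $ef=e=fe$, so that $\mathcal S_F(m,r)e$ is indecomposable and its image $f\mathcal S_F(m,r)e\cong\mathcal S_F(2,r)1_{(r,0)}$ under the Schur functor; by \cite[(6.2g)]{Gr} it then suffices to show $\mathcal S_F(2,r)1_{(r,0)}$ is indecomposable but not irreducible for all $r,l,p$ outside (1)--(4). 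This localises the whole problem to $m=2$.

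The heart of the argument is therefore the $m=2$ case, and this is where I expect the main obstacle to lie: one must show that $\mathcal S_F(2,r)$ fails to be semisimple precisely when $r\geq l$ and we are not in the exceptional families (3), (4). For a given $q$-primitive-$l$-th root of unity and $r\geq l$, I would look for a ``maximal vector'' in $P=\mathcal S_F(2,r)1_{(r,0)}$, i.e. an element of some $P1_{(a,r-a)}$ killed by all $\sce_1^{(p)}$ but not generating $P$; concretely, writing $[A_{a,r-a}]$ for the diagonal matrix and $[A^+_{b,r-b-1}]$ etc. as in Lemma \ref{ss1}, one applies divided powers $\scf_1^{(t)}$ to $e$ and reads off from Theorem \ref{KeyMF}(1) that $\sce_1^{(p)}\scf_1^{(t)}e$ vanishes when the Gaussian coefficient $\overline{\left[\!\!\left[{a\atop p}\right]\!\!\right]}_{q}$ hits a zero, which happens generically once $r\geq l$. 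The delicate point is the interplay with the characteristic $p$: the Gaussian integer $[\![a]\!]$ vanishes in $F$ iff $l\mid a$ (that is $q$-characteristic), but the true obstruction involves $\overline{\left[\!\!\left[{a\atop p}\right]\!\!\right]}$, which by Lucas-type congruences vanishes unless the base-$l$ digits of $p$ are dominated by those of $a$, and in characteristic $p>0$ there is an additional layer; the families (3) and (4) are exactly the coincidences where, for $m=2$, every candidate maximal vector turns out to generate all of $P$. I would handle the generic $m=2$, $r\geq l$ case first to establish non-semisimplicity, then separately verify that when $l=2$, $m=2$, and $r$ is odd (with $p=0$, or $p\geq3$ and $r<2p+1$) no such proper submodule exists, completing the classification; cross-checking against the explicit small-rank computations of Erdmann--Nakano would serve as a sanity check on the boundary cases.
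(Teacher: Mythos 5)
Your overall architecture — semisimplicity of $\sH_F$ for (1)--(2), Schur-functor localisation to $m=2$ via an idempotent $f$ with $f\sS_F(m,r)f\cong\sS_F(2,r)$ and \cite[(6.2g)]{Gr}, and maximal vectors read off from the multiplication formulas — agrees with the paper's, but there is a genuine gap in the ``only if'' direction, precisely in the case $m=2$, $l\geq3$, $l\mid r+1$. The maximal-vector construction in $P=\sS_F(2,r)1_{(r,0)}$ only works when $l\nmid r+1$: writing $r=al+b$ with $0\leq b\leq l-2$, the element $\scf_1^{(b+1)}1_{(r,0)}$ is killed by every $\sce_1^{(p)}$ because $\sce_1$ applied to it carries the factor $[\![al]\!]=0$. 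When $l\mid r+1$, the only candidates killed by $\sce_1$ are $\scf_1^{(cl)}1_{(r,0)}$, and these are \emph{not} killed by $\sce_1^{(l)}$ in general: the relevant Gaussian binomial reduces by the $q$-Lucas theorem to an ordinary integer such as $a-c$, which need not vanish in $F$. Worse, your strategy confines the search to $P$, and $P\cong\Delta(r,0)$ can be genuinely irreducible in this case (e.g.\ $p=0$, $l=3$, $r=5$), even though $\sS_F(2,5)$ is not semisimple because $\Delta(4,1)$ is reducible; so no amount of computation inside $P$ will exhibit non-semisimplicity. The paper closes this case by a mechanism your proposal lacks: tensoring with the determinant representation shifts $r$ by $2$, so non-semisimplicity for all $r\geq l\geq3$ follows from the two seed cases $r=l$ and $r=l+1$, both of which satisfy $l\nmid r+1$. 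Without this (or a substitute such as locating a maximal vector in $\sS_F(2,r)1_\mu$ for a suitable $\mu$ with $\mu_2>0$), the classification is not established.

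A secondary divergence concerns the ``if'' direction for (3)--(4). The paper does not decompose the left ideals $\sS_F(2,r)1_{(a,r-a)}$ directly — these have dimension $(a+1)(r-a+1)$, so the four-dimensional bookkeeping of Lemma \ref{ss1} does not transfer, and the $[E,F]$-relation alone will not split them. Instead it invokes the criterion that $\sS_F(2,r)$ is semisimple iff every $q$-Weyl module $\Delta(\lambda)$ is irreducible, computes $\sce_1^{(a)}\scf_1^{(a)}x_\lambda={\left[{\lambda_1-\lambda_2\atop a}\right]}_\ups x_\lambda$ on a highest weight vector, and for $l=2$, $r$ odd factors the Gaussian binomial as $\binom{(\lambda_1-\lambda_2-1)/2}{a_1}{\left[{1\atop a_0}\right]}_\ups$, reducing everything to the non-vanishing of an ordinary binomial coefficient in characteristic $p$ — whence the conditions $p=0$ or $p\geq3$ with $r<2p+1$. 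Your Lucas-type analysis points in the right direction, but you should route it through the Weyl-module criterion rather than through a direct complete-reducibility argument for the regular representation.
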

\begin{proof}If $q$ satisfies (1) or (2), then $\sS_F(m, r)$ is clearly semisimple. Suppose now that $q$ is  a primitive $l$th root of unity and $r\geq l>1$. By Corollary \ref{KMFcor},
an argument similar to those given in the proofs of Lemma \ref{ss2} and Theorem \ref{thmqss} shows that both
 $\sS_F(m,r)1_{(r,0,\cdots,0)}, m\geq 3$, and
 $\sS_F(2,r)1_{(r,0)}, l\nmid r+1$,  are indecomposable but not irreducible.
In particular,
both $\sS_F(2,l)$ and $\sS_F(2,l+1)$ are not semisimple if $l\geq3$.
Since tensoring an $\sS_F(2,r)$-module with the determinant representation gives an
$\sS_F(2,r+2)$-module, we see that $\sS_F(2,r)$ is not
semisimple for all $r\geq l\geq 3$. Hence, a semisimple $\sS_F(m,r)$ forces $m=2,l=2$ and $2|r+1.$
It remains to determine the
semisimplicity of $\sS_F(2,r)$ when $r\geq l=2$ and $r$ odd (and so
$2|r+1$). We claim that, for $r\geq l=2$ with $r$ odd, $\sS_F(2,r)$ is semisimple if and only if either $p=0$ or $p\geq 3$  but $r<2p+1$. Indeed, $\sS_F(2,r)$ is semisimple if and only if  all $q$-Weyl modules $\Delta(\lambda), \lambda\in\Lambda^+(2,r)$, are irreducible.
For $\lambda=(\lambda_1,\lambda_2) \in\Lambda^+(2,r),$ if $x_\lambda\in \Delta(\lambda)$ is a highest weight vector, then
$\Delta(\lambda)$ has a basis
$
x_\lambda, \scf_1 x_\lambda, \scf_1^{(2)}x_\lambda,\cdots,
\scf_1^{(\lambda_1-\lambda_2)}x_\lambda%\quad (\scf_1^{(a)}=\frac{\scf_1^{a}}{[a]!})
$ and, for $1\leq a\leq \lambda_1-\lambda_2,$ we have
\begin{equation*}\label{ss22}
\sce_1^{(a)}\scf_1^{(a)}x_\lambda =\sum_{s=0}^a \scf_1^{(a-s)}
{\left[ \lambda_{1}-\lambda_{2}; 2s-2a \atop s
\right]}_\ups \sce_1^{(a-s)}x_\lambda = {\left[
\lambda_{1}-\lambda_{2} \atop a \right]}_\ups x_\lambda.
\end{equation*}
Thus, the irreducibility of $\Delta(\la)$ is equivalent to
$
\prod_{0\leq a\leq
\lambda_{1}-\lambda_{2}} {\left[ \lambda_{1}-\lambda_{2}
\atop a \right]}_\ups \neq 0.
$
Since $r=\lambda_{1}+\lambda_{2}$ is odd and $l=2$, we see that $\lambda_{1}-\lambda_{2}$ is also odd and $
{\left[ \lambda_{1}-\lambda_{2} \atop a
\right]}_\ups= {\left( \frac{\lambda_{1}-\lambda_{2}-1}2 \atop a_1
\right)} {\left[ 1 \atop  a_0 \right]}_\ups, $ where
$a=2a_1+a_0$ with $a_0=0,1.$  Obviously, ${\left[ 1 \atop
a_0 \right]}_\ups=1.$  Thus, if $p=0$ or $p\geq 3$  but  $r<2p+1$  then ${\left(
\frac{\lambda_{1}-\lambda_{2}-1}2 \atop a_1 \right)}\neq 0$ for all $(\la_1,\la_2) \in\Lambda^+(2,r)$ and $1\leq a\leq \lambda_1-\lambda_2$. Hence,
$\sS_F(2,r)$ is semisimple in this case. Conversely, if $r\geq 2p+1,$
choose $\la$ so that $\lambda_{1}-\lambda_{2}=2p+1$  and $a=3$.  Then $ {\left[\lambda_{1}-\lambda_{2} \atop 3
\right]}_\ups={\left(
\frac{\lambda_{1}-\lambda_{2}-1}2 \atop 1 \right)}={\left( p \atop 1
\right)}=0.$ Hence, $\Delta(\lambda)$ is not simple in this case and so $\sS_F(2,r)$ is not semisimple.
\end{proof}

 \end{appendix}

\end{document}